\numberwithin{equation}{section}
\newtheorem{theorem}{Theorem}[section]
\newtheorem{lemma}[theorem]{Lemma}
\newtheorem{proposition}[theorem]{Proposition}
\newtheorem{corollary}[theorem]{Corollary}
\newtheorem{sinnada}[theorem]{}
\theoremstyle{definition}
\newtheorem{definition}[theorem]{Definition}
\newtheorem{definitions}[theorem]{Definitions}
\newtheorem{notation}[theorem]{Notation}
\newtheorem{comentario}[theorem]{Comment}
\newtheorem{remark}[theorem]{Remark}
\newcommand{\az}[1]{\textcolor{blue}{#1}}
\newcommand{\rjj}{\color{red}}
\theoremstyle{remark}
\newcommand \Ho {\mathcal{H}o(\mathscr{C})}
\newcommand \HAo {\mathcal{H}o(\mathscr{A})}
\newcommand \Hof {\mathcal{H}o^{f}(\mathscr{C})}
\newcommand \HAof {\mathcal{H}o^{f}(\mathscr{A})}
\newcommand \Hol {\mathcal{H}o^{\ell}(\mathscr{C})}
\newcommand \fcHo {\mathcal{H}o_{fc}(\mathscr{C})}
\newcommand \Hofc {\mathcal{H}o(\mathscr{C}_{fc})}
\newcommand \Holf {\mathcal{H}o^{\ell f}(\mathscr{C})}
\newcommand \Horc {\mathcal{H}o^{r c}(\mathscr{C})}
\newcommand \fclfHo {\mathcal{H}o_{fc}^{\ell f}(\mathscr{C})}
\newcommand \fcrcHo {\mathcal{H}o_{fc}^{\ell f}(\mathscr{C})}
\newcommand{\catHofc}[1]{\mathcal{H}o(\mathscr{#1}_{fc})}
\newcommand*\widebar[1]{%
   \hbox{%
     \vbox{%
       \hrule height 0.5pt %
       \kern0.5ex
       \hbox{%
         \kern-0.1em
         \ensuremath{#1}%
         \kern-0.1em%
       }%
     }%
   }%
}
\newcommand{\germ}{\stackrel{ge}{\thicksim}}
\newcommand{\crpd}{\stackrel{cd}{\thicksim}}
\newcommand{\hpy}
           {
            \hspace{-1ex}
            \xymatrix{ {} \ar@2{~>}@<0.25ex>[r] & {} }
            \hspace{-0.6ex}
            }
\newcommand{\Hpy}[1]  
    {
    \hspace{-0.6ex}
    \xymatrix@C=3ex{{} \ar@2{~>}[r]^{#1} & {}}
    \hspace{-0.6ex}
    }
\newcommand{\mre}[1]  
    {\xymatrix@1{{} \ar[r]^{#1}| \circ & {}}}
\newcommand{\mrf}[1]  
    {\xymatrix@1{{} \ar@{->>}[r]^{#1} & {}}}
\newcommand{\mrfe}[1] 
    {\xymatrix@1{{} \ar@{->>}[r]^{#1}|\circ & {}}}
\newcommand{\mrc}[1] 
    {\xymatrix@1{{} \hspace{1ex} \ar@{>->}[r]^{#1} & {}}}
\newcommand{\mrce}[1] 
    {\xymatrix@1{{} \hspace{1ex} \ar@{>->}[r]^{#1}|\circ & {}}}
\newcommand{\mr}[1]{\stackrel{#1}{\longrightarrow}}
\newcommand{\Mr}[1]{\stackrel{#1}{\Longrightarrow}}
\newcommand{\Ml}[1]{\stackrel{#1}{\Longleftarrow}}
\newcommand{\mvr}[1]{\xymatrix{{} \ar@{~>}[r]^{#1} & {}}}  
\newcommand{\mrdos}[2]
  {
   \xymatrix
      {
       {} \ar@<5pt>[r]^{#1} \ar@<-4pt>[r]^{#2} & {}
      }
  } 
\newcommand{\Mrr}[2] 
  {
   \xymatrix@C=#1ex
       { 
        {} \ar@{=>}[r]^{#2} & {}
       }
  }
\newcommand{\cellrdE}[3] 
{\xymatrix@C=7ex@R=2.4ex
        {
		\ar@<1.9ex>[r]^{#1} 
		\ar@{}@<-1.3ex>[r]^{\!\! {#2} \, \!\Downarrow}
		\ar@<-1.1ex>[r]_{#3} & 
		}
}
\newcommand{\cellrdEcorta}[3] 
{\xymatrix@C=5ex@R=2.4ex
       {
		\ar@<1.6ex>[r]^{#1} 
		\ar@{}@<-1.3ex>[r]^{\!\! {#2} \, \!\Downarrow}
		\ar@<-1.1ex>[r]_{#3} & 
	   }
}
\newcommand{\cellrd}[3] 
 {
  \xymatrix@C=5ex@R=2.4ex
         {
          {} \ar@<1.1ex>[r]^{#1}
             \ar@{}@<-1.3ex>[r]^{\Downarrow \; {#2}}
             \ar@<-1.2ex>[r]_{#3}
          & {}
         }
}
\newcommand{\cellrdb}[3] 
 {
  \xymatrix@C=7ex@R=2.4ex
         {
          {} \ar@<1.4ex>[r]^{#1}
             \ar@{}@<-1.3ex>[r]^{\Downarrow \; {#2}}
             \ar@<-1.1ex>[r]_{#3}
          & {}
         }         
 }
 \newcommand{\scellrd}[3] 
 {
  \xymatrix@C=4.5ex@R=2.4ex
         {
          {} \ar@<1.4ex>[r]^{#1}
             \ar@{}@<-1.3ex>[r]^{\!\! \Downarrow \, {#2}}
             \ar@<-1.1ex>[r]_{#3}
          & {}
         }
}
\newcommand{\modif}[3] 
 {
  \xymatrix@C=7ex@R=2.4ex
         {
          {} \ar@<1.6ex>@{=>}[r]^{#1}
             \ar@{}@<-1.3ex>@{=>}[r]^{\!\! {#2} \, \!\downarrow}
             \ar@{}@<-1.1ex>[r]_{#3}
          & {}
         }
 }
\newcommand{\cellld}[3] 
 {
  \xymatrix@C=6ex@R=2.4ex
         {
            {}
          & {} \ar@<1.0ex>[l]^{#3}
          \ar@{}@<-1.7ex>[l]^{\!\! {#2} \, \!\Downarrow}
	                                 \ar@<-1.7ex>[l]_{#1}
         }
 }
\newcommand{\cellpairrd}[4] 
 {
  \xymatrix@C=8ex@R=2.2ex
         {
          {} \ar@<1.6ex>[r]^{#1}
             \ar@{}@<-1.3ex>[r]^{\!\! \Downarrow \, {#2} 
                                 \;\;\; \Downarrow \, {#3} }
             \ar@<-1.1ex>[r]_{#4}
          & {}
         }
 }
\newcommand{\cqd}{\hfill$\Box$}
\newcommand{\cc}{\mathcal} 
\newcommand{\sr}{\mathscr}
\newcommand{\ol}{\overline}
\begin{document}

\title{The 2-localization of a model category}

\author{Eduardo J. Dubuc, Jaqueline Girabel}
  
\date{\vspace{-5ex}}

\maketitle



\begin{abstract} 
In this paper we elaborate on a 2-categorical construction of the homotopy category of a Quillen model category.
Given any  category $\sr{A}$ and a class of morphisms 
$\Sigma \subset \sr{A}$ containing the identities, we construct a 2-category $\HAo$ obtained by the addition of 2-cells determined by homotopies. A salient feature here is the use of a novel notion of cylinder introduced in \cite{e.d.2}. The inclusion 2-functor $\sr{A} \mr{} \HAo$ has a universal property which  yields the 2-localization of $\sr{A}$ at $\Sigma$ provided that the arrows of $\Sigma$ become equivalences in $\HAo$. This result together with a fibrant-cofibrant replacement is then used to obtain the 2-localization of a model category $\sr{C}$ at the weak equivalences  $\cc{W}$. The set of connected components of the hom categories yields a novel proof of Quillen's results. We follow the general lines established in \cite{e.d.2}, \cite{e.d.} for model bicategories.

  
\end{abstract} 





\tableofcontents



\vspace{4ex}

{\bf \Large Introduction}

\vspace{1ex} 

In this paper we study a 2-dimensional version of  Quillen's homotopy category construction. As in Quillen's construction, our input is a 1-model category, but our output are (2,1)-categories, and we set and establish their localization universal property in the 3-category of 2-categories and 2-functors. Of particular interest will be to study the relation with the simplicial localization developed  in \cite{DK1}, \cite{DK2}, and the possibility of a construction of this localization  using homotopy-like constructions in place of hammocks. We follow the general lines established in \cite{e.d.2}, \cite{e.d.} where the notion of model 2-category is introduced, and a fully 2-dimensional version is developed, with input and output (2,2)-categories, and the localization  universal property set in the 3-category of 2-categories and pseudofunctors. The developments here are not just an adaptation of the general theory to a particular case, in particular most of proofs produced are not simplified versions of the ones of the $(2,\,2)$ case.  There are two substantial differences: First, the absence of non-invertible 2-cells not only simplifies the theory, but also avoids the need for the new axioms and definitions introduced in loc. cit. above, which conform a different theory in form and spirit. Second, we introduce the use of functorial factorizations in the fibrant-cofibrant replacement, which allows us to dispense the use of pseudofunctors keeping only 2-functors, a fact that adds important simplifications but which has its own shortcomings. Only left homotopies suffice, there is no need to consider right homotopies. There are no fibrant nor cofibrant replacement 2-functors at the level of the homotopy \mbox{2-categories,} but a simultaneous fibrant-cofibrant replacement 2-functor. 

We see in an appendix the role played by the right homotopies, and how the original Quillen factorization axiom suffices to determine a fibrant and a cofibrant replacement at the homotopy level, but given by pseudofunctors, not 2-functors, and the same happens with the localization arrow, 
 also given by a pseudofunctor.  

\vspace{2ex}    

We pass now to describe the content of the paper:

\vspace{1ex}

{ \bf Section \ref{prelims}.} In a first section we fix notation, terminology, and recall some definitions whose explicit formulation become necessary to formulate the statements in this paper and develop their proofs. 

In particular, given a 2-category $\sr{A}$, a class of morphisms 
$\Sigma \subset \sr{A}$ containing the identities, and any 2-category $\sr{D}$, we denote by $Hom_{p}(\sr{A}, \sr{D})$ the
2-category with objects the 2-functors,  arrows the pseudonatural transformations and  2-cells the modifications, and by 
$Hom_{p}(\sr{A}, \sr{D})_{+}$ the full 2-subcategory spanned by the 2-functors which send the class $\Sigma$ into equivalences. Recall that we can consider categories as trivial 2-categories whose only 2-cells are the identities.
 
\vspace{1ex}

{ \bf Section \ref{hacheo}.} Given a category $\sr{A}$ and a class of morphisms 
$\Sigma \subset \sr{A}$ containing the identities, this section concerns the construction of a 2-category $\HAo$ obtained by the addition of 2-cells to 
$\sr{A}$, and such that the inclusion 2-functor $\sr{A} \mr{i} \HAo$ has the following universal property:

\vspace{1ex}

{\bf (up)} \emph{Precomposition with $i$, 
\mbox{$i^*: Hom_{p}(\HAo, \mathscr{D})_+\longrightarrow 
Hom_{p}(\mathscr{A}, \mathscr{D})_+$} induces an isomorphism of 2-categories for any 2-category $\sr{D}$.}

\vspace{1ex}

It is clear that $\HAo$ will be the 2-localization of $\sr{A}$ at $\Sigma$ as soon as the arrows of $\Sigma$ became equivalences in $\HAo$. We show that a sufficient condition for this is that 
$\Sigma$ has the 3 x 2 property and is generated by sections and retractions.

We construct $\HAo$ using cylinders and homotopies, so we call the members of $\Sigma$ \emph{weak equivalences} and $\HAo$ the 
\emph{homotopy 2-category}. Homotopies will determine the 2-cells of 
$\HAo$. We work using a definition of cylinder with respect to a class $\Sigma$, and the corresponding homotopies, introduced in 
\cite[3.2.3]{S}, which is more general and less rigid than Quillen's. 

 A \emph{cylinder} $C$ 
for an object $X$ 
is a configuration
%
%
        $\xymatrix@R=2ex{
     X \ar@<3pt>[rr]^{d_0}
        \ar@<-3pt>[rr]_{d_1} 
        \ar[dr]_{x} && W,
        \ar[dl]^{s}| \circ\\
        & Z \\ }$
    where $s \in \Sigma$ and $sd_0=sd_1=x$ \footnote{In \cite[3.3.1]{S} it is also introduced a more restricted notion under the name  "\emph{fork}", where $x$ is assumed to be a weak equivalence.}.
A
\emph{homotopy} $H$ with cylinder $C$ from an arrow $X \mr{f} Y$ to an arrow \mbox{$X \mr{g} Y$} is defined in the usual way as an arrow $W \mr{h} Y$ such that $f = h \, d_0$ and    
$g = h \, d_1$.
 
Homotopies can be vertically (actually sequences of composable homotopies) and horizontally composed but they are not yet the 2-cells of a 
\mbox{2-category} $\HAo$ with the same objects and arrows as $\cc{C}$. The  2-category axioms are equations, thus, the homotopy on the left of the equation should coincide with the one on the right, and if not, they should be identified. We do this by defining an equivalence relation as follows:
 
If $\sr{A}$ is already a 2-category and $s$ is an equivalence, it follows that there is a unique 2-cell $\widehat{C}: d_0 \Mr{} d_1$ such that 
$s\, \widehat{C} = x$, and this determines a 2-cell 
$\widehat{H} = h\, \widehat{C} : f \Mr{} g$. A 2-functor 
$\sr{A} \mr{F} \sr{D}$ into a 2-category $\sr{D}$ sends cylinders to cylinders and homotopies to homotopies, thus if it sends weak equivalences into equivalences, there is a 2-cell $\widehat{FH}: Ff \Mr{} Fg$. 

In view of this, given such a 2-functor $F: \sr{A} \longrightarrow \mathscr{D}$, we could extend $F$ to a 2-functor $\widetilde{F}$ defined in $\HAo$ as in the diagram
$\xymatrix@R=2ex{
        \ar @{} [drr] |(.45){}
        \sr{A} \hspace{1mm} \ar@{^{(}->}[rr]^{i} \ar[dr]_{F} && \HAo \ar@{-->}[dl]^(.5){\exists!\widetilde{F}} \\
        & \mathscr{D} & \\
    }
$
by defining $\widetilde{F} = F$ on objects and arrows, and 
$\widetilde{F}(H) = \widehat{FH}$ on homotopies. For $\widetilde{F}$ to be well defined on equivalence classes it is then necessary that for any two equivalent homotopies $H, \,K$,  
 the equality $\widehat{FH} = \widehat{FK}$ must hold for all such 2-functors. We just set this condition as the definition of the equivalence relation we use to define 
$\HAo$, and call it the \emph{ad-hoc relation}.

We show that this relation indeed determines a 2-category, which is the 2-category sought, that is, it satisfies the universal property {\bf (up)} above.

\vspace{1ex}

We consider also another relation between homotopies, that we call the \emph{germ relation}. There is a natural definition of morphisms of cylinders, which then form a category. Fixing two objects and two arrows 
$X \mrdos{f}{g} Y$, the set of homotopies $Hpy(X, \,Y)(f,\,g)(C)$ with cylinder $C$ defines a contravariant set valued functor on the variable $C$, and the germ relation is the equivalence relation which computes its colimit. This relation was already considered by Quillen himself in \cite{Qui} and we think it will be meaningful for the work in higher dimensions. Here we show that it is finer than the ad-hoc relation, a fact that comes in handy to establish equality of 2-cells in $\HAo$, as we see in the next section. 

\vspace{1ex}   

{\bf Section \ref{Loc_model}.} In this section we apply the construction of the homotopy 2-category in Section \ref{hacheo} to develop a 2-dimensional version of Quillen's localization of a model category 
$\sr{C}$, $(\mathcal{F}, \; \mathcal{C}o\!f, \; \mathcal{W})$, notation refers to \ref{MC}. We set $\Sigma = \cc{W}$ and we will consider the cases  $\sr{A} = \sr{C}$ and $\sr{A} = \sr{C}_{fc}$, the full subcategory of fibrant-cofibrant objects. 

\vspace{1ex}

In a first subsection \ref{locationC_fc} we work with $\sr{C}_{fc}$. 
As usual, we check that weak equivalences between fibrant-cofibrant objects factor though a fibrant-cofibrant object as a section followed by a retraction, both weak equivalences. Then using Theorem \ref{2-loc-W-split} 
we establish that the 2-functor \mbox{$\sr{C}_{fc} \mr{i} \Hofc$} is the 
2-localization in a strict sense of $\sr{C}_{fc}$ at the weak equivalences. 
For any 2-category $\sr{D}$, precomposition with $i$ establishes a 2-category isomorphism   
$$
i^*: Hom_p(\cc{H}o(\sr{C}_{fc}), \mathscr{D}) \mr{}
             Hom_{p}(\sr{C}_{fc}, \mathscr{D})_+ 
$$
Note that 
$\sr{C}_{fc}$ is not a model category. 

There is another 2-category $\fcHo \subset \Ho$, the full 2-subcategory of 
$\Ho$ whose objects are the fibrant-cofibrant objects, which is coarser than  
$\Hofc$ since for the latter the ad-hoc relation may include \mbox{2-functors} on 
$\sr{C}_{fc}$ which do not extend to $\sr{C}$. We abound on this in Remark \ref{listo}.

\vspace{1ex}

In a second subsection \ref{Quillen_2cells} we recall Quillen's cylinders, denoted here \mbox{$q$-cylinders}, as cylinders
$\xymatrix@R=2ex{
     X \ar@<3pt>[rr]^{d_0}
        \ar@<-3pt>[rr]_{d_1}
        \ar[dr]_{x} && W,
        \ar[dl]^{s}| \circ\\
        & Z \\ }$
with $Z = X$, $x = id_X$, that is
$\xymatrix@R=2ex{
     X \ar@<3pt>[rr]^{d_0}
        \ar@<-3pt>[rr]_{d_1}
        \ar[dr]_{id} && W,
        \ar[dl]^{s}| \circ\\
        & X \\ }$
where furthermore it is required that 
$X \amalg X \mr{\binom{d_0}{d_1}} W$ be a cofibration. Quillen's left homotopies, denoted here  \mbox{$q$-homotopies}, are homotopies whose cylinder is a $q$-cylinder. 

We show using the germ relation that in $\Hofc$  any 2-cell $[H]$ can be given with  $H$ a $q$-homotopy. On the other hand, we show how 2-cells determined by $q$-homotopies compose vertically. In this way, the 2-category $\Hofc$ coincides with the 2-category with 2-cells given by classes (not sequences) of $q$-homotopies. We comment that general homotopies are necessary for the horizontal composition because the composite of a $q$-homotopy $H$ with an arrow $l$ on the right, $H\,l$, is given by a cylinder which is not any more a $q$-cylinder, see Comment \ref{rcompoq}. 

\vspace{1ex}

Taking the connected components in the hom-categories of our construction we derive Quillen's homotopy category of $\sr{C}_{fc}$.

\vspace{1ex}

In  subsection \ref{2_loc_srC} we set $\sr{A} = \sr{C}$ and develop its localization \mbox{$\mathscr{C} \mr{q} \Hofc $}. Our main theorem is the following: For any 2-category $\sr{D}$, precomposition with $q$ establishes a 2-category pseudoequivalence   
$$
q^*: Hom_p(\cc{H}o(\sr{C}_{fc}), \mathscr{D}) \mr{}
             Hom_{p}(\mathscr{C}, \mathscr{D})_+ 
$$
We remark that we will need the fundamental theorem \ref{hom_iso}, not only the localization theorem \ref{2-loc-W-split} as it was in the case 
$\sr{A} = \sr{C}_{fc}$. We construct a fibrant and a cofibrant replacement which, while they are defined on arrows, do not necessarily preserve  composition. At this point we assume that the factorization axiom is functorial, a requirement frequently used in the post-Quillen literature, and show that this yields a fibrant-cofibrant replacement functor  
\mbox{$\mathscr{C} \mr{r} \mathscr{C}_{fc}$}. We take then  the composition with \mbox{$\mathscr{C}_{fc} \mr{i} \Hofc$} and prove that it establishes the localization $q = i\,r$, \mbox{$\mathscr{C} \mr{q} \Hofc $}  sought. 
We also remark here that for the proof developed in subsection \ref{2location} it is necessary to count with a fibrant-cofibrant replacement 2-functor 
$\Ho \mr{\ol{r}} \Hofc$ at the level of the homotopy categories. In subsection \ref{fibrant-cofibrant} we construct such a 2 functor.  With this, the proof goes as follows: We show that $i\,r = \ol{r}\,i$,
%
%
so that $q = \ol{r}\,i$, and we prove that the 2-functor $\ol{r}^\ast$ of precomposition with  $\ol{r}$ is a pseudoequivalence of 2-categories. Since by Theorem \ref{hom_iso} we already know that this is the case for the 2-functor $i^*$, the result follows.

Note that the 2-functor $\ol{r}$ is a simultaneous fibrant-cofibrant replacement, not a composition of a fibrant with a cofibrant replacement 2-functors, which do not necessarily exist.
The reader should be aware also that since in subsection \ref{2_loc_srC} we are assuming functorial factorization, the 2-categories $\Hofc$ and $\fcHo$ coincide, discarding a possible inconsistency. This fact is not used to prove the theorem. 

\vspace{1ex}

{\bf Appendix.} In this appendix we complete the picture. We briefly describe what is the situation of the problem of the 2-localization of a model category in the absence of functorial factorizations.  Complete definitions and proofs are developed in [DDS2] for model bicategories. The interest in the particular case of model categories is twofold. One is that it is much simpler, avoiding the difficulties, complications and new axioms necessary to be able to deal with non invertible 2-cells. The other is that surprisingly the resulting theory is different from the theory developed in section 3, and this difference adds  a new light on the functorial factorization axiom and its consequences. 

The concepts dual to that of cylinder and homotopy become necessary, left fibrant and right cofibrant homotopies play an essential role.
Pseudofunctors are unavoidable, the localizing arrow \mbox{$q:\mathscr{C} \longrightarrow \fcHo$} is a pseudofunctor, and the localization theorem becomes:  
\emph{Precomposition with $q$ establishes a 2-category pseudoequivalence}
$$
q^*: pHom_p(\fcHo, \mathscr{D}) \mr{}
             pHom_{p}(\mathscr{C}, \mathscr{D})_+
$$
\emph{for every 2-category $\mathscr{D}$.} Now the hom 2-categories are $pHom_{p}$, that is the 2-category whose objects are the pseudofunctors. More importantly,  there are fibrant and cofibrant replacements at the level of the homotopy 2-categories, which allow to obtain the fibrant-cofibrant replacement as a composition. There is no pseudofunctor \mbox{$\mathscr{C} \mr{} \Hofc$}, now the homotopy category is $\cc{H}o_{fc}(\sr{C})$, not $\cc{H}o(\sr{C}_{fc})$, which is a different 2-cate,gory in the absence of functorial factorization.

  \section{Preliminaries} \label{prelims}
This section is necessary to recall some concepts that we will use explicitly later, and to fix the notation and terminology.
\subsection{2-Categories}

\begin{notation} \label{notation}
A 2-category has \emph{objects} $X$, \emph{arrows} $X \mr{f} Y$ and \emph{2-cells} $f \Mr{\alpha} g$ (we will also say \emph{morphisms}, \emph{maps},  or
\emph{1-cells} to refer to the arrows).
The arrows and 2-cells are composed horizontally, which we simply denote as a juxtaposition, and the 2-cells are composed vertically, which we denote by $\,\circ\,$, the arrows are considered to be identity \mbox{2-cells } for vertical composition, we will denote both $f$ and $id_f$ as appropriate in the context. Thus, given $f \Mr{\alpha} g$, we have $\alpha \circ f = \alpha$ and $g \circ \alpha = \alpha$.

\end{notation}

\begin{remark} \label{horizontal_comp}
    In the presence of vertical composition, to determine a horizontal composition of 2-cells it is enough to define a horizontal composition between 2-cells and morphisms, which some authors call \emph{wishkering}:
    
        For each \begin{tikzcd}
    X \arrow["l"]{r}
    & Y \arrow[shift left=10pt, "f"]{r}[name=LUU, below]{}
    \arrow[shift right=10pt, "g"']{r}[name=LDD]{}
    \arrow[Rightarrow,to path=(LUU) -- (LDD)\tikztonodes]{r}{\alpha}
    & \hspace{1mm} Z \arrow["r"]{r}
    &W,
    \end{tikzcd}
    suppose we have defined 2-cells $r\,\alpha$ and 
    $\alpha \,l$. If the following axioms hold
    \begin{enumerate}
        \item For each $\xymatrix{X \ar[r]^f & Y\ar[r]^g & Z}, Id_gf=gId_f=Id_{gf}$. 
        \item For each \begin{tikzcd}
                  X \arrow["l"]{r}
                  & Y \arrow[shift left=20pt, "\hspace{6mm}f"]{r}[name=LUU, below]{} \arrow["\hspace{6mm} g"']{r}[name= LUD]{}
                  \arrow[swap]{r}[name=LDU]{}
                  \arrow[shift right=20pt, "\hspace{6mm}k"']{r}[name=LDD]{}
                  \arrow[Rightarrow,to path=(LUU) -- (LUD)\tikztonodes]{r}{\hspace{-6mm} \alpha}
                  \arrow[Rightarrow,to path=(LDU) -- (LDD)\tikztonodes]{r}{\hspace{-6mm} \beta}
                  & Z \arrow["r"]{r}
                  &W
            \end{tikzcd},  $(\beta l) \circ (\alpha l)=(\beta \circ \alpha)l$, \mbox{$(r\alpha) \circ (r\beta)=r(\beta \circ \alpha)$.}
        \item For each
            \begin{tikzcd}
                X \arrow[shift left=10pt, "f"]{r}[name=LUU, below]{}
                \arrow[shift right=10pt, "g"']{r}[name=LDD]{}
                \arrow[Rightarrow,to path=(LUU) -- (LDD)\tikztonodes]{r}{\alpha}
                & \hspace{1mm} Y \arrow[shift left=10pt, "f'"]{r}[name=RUU, below]{}
                \arrow[shift right=10pt, "g'"']{r}[name=RDD]{}
                \arrow[Rightarrow,to path=(RUU) -- (RDD)\tikztonodes]{r}{\alpha'}
                &Z,
            \end{tikzcd}
    $(g'\alpha) \circ (\alpha' f) = (\alpha' g) \circ (f'\alpha)$
    \end{enumerate}
    then the horizontal composition of any composable pair 
    $\alpha',\: \alpha$, can be defined by
    \mbox{$\alpha' \, \alpha = (g'\alpha) \circ (\alpha' f) = (\alpha' g) \circ (f'\alpha).$}
\end{remark}

\begin{definitions} \label{def_pseudo}
    A \emph{2-functor} $F:\mathscr{C} \longrightarrow \mathscr{D}$ between 2-categories sends objects of $\mathscr{C}$ into objects of $\mathscr{D}$, arrows of $\mathscr{C}$ into arrows of $\mathscr{D}$ and 2-cells of $\mathscr{C}$ into 2-cells of $\mathscr{D}$, preserving all the structure,  vertical and horizontal compositions, and identities.
 
 \vspace{1ex}
 
 If $G$ is another 2-functor between the same 2-categories, a \emph{pseudo-natural transformation} $\eta: F \Longrightarrow G$ consists of a family of arrows
    \mbox{$\eta_X:FX \longrightarrow GX$ in $\mathscr{D}$,} for each $X$ in $\mathscr{C}$, and for each arrow $f: X \longrightarrow Y$ in 
    $\mathscr{C}$ an invertible 2-cell $\eta_f: Gf\eta_X \Longrightarrow \eta_YFf$ in $\mathscr{D}$
     
    \begin{center}
        $\xymatrix{
            FX \ar[r]^{\eta_X} \ar[d]_{Ff} & GX \ar[d]^{Gf} \ar@{}[dl]|{\rotatebox[origin=c]{45 }{$\Leftarrow$} \eta_f}\\
            FY \ar[r]_{\eta_Y} & GY}$
    \end{center}
such that for each 2-cell $\alpha: f \Longrightarrow g$ it is verified:
    \begin{enumerate}
        \item For each $X$ in $\mathscr{C}$, $\eta_{id_X}=\eta_X$;
        \item Given $\xymatrix{ X \ar[r]^{f} & Y \ar[r]^{g} & Z }$ in $\mathscr{C}$, we have $\eta_g F(f )\circ G(g) \eta_f=\eta_{gf}$, according to notation \ref{notation}.
        \begin{center}
        $\xymatrix{
            FX \ar[r]^{\eta_X} \ar[d]_{Ff} & GX \ar[d]^{Gf} \ar@{}[dl]|{\rotatebox[origin=c]{45 }{$\Leftarrow$} \eta_f}\\
            FY \ar[r]_{\eta_Y} \ar[d]_{Fg} & GY \ar[d]^{Gg}\ar@{}[dl]|{\rotatebox[origin=c]{45 }{$\Leftarrow$} \eta_g} & =\hspace{3mm} \\
            FZ \ar[r]_{\eta_Z} & GZ\\}$
            $\xymatrix{
            FX \ar[r]^{\eta_X} \ar[dd]_{Fgf} & GX \ar[dd]^{Ggf} \ar@{}[ddl]|{\rotatebox[origin=c]{45 }{$\Leftarrow$} \eta_{gf}}\\
            \\
            FZ \ar[r]_{\eta_Z} & GZ}$
        \end{center}
        \item For each 2-cell $\alpha: f \Longrightarrow g : X \longrightarrow Y$ in $\mathscr{C}$, the equation $\eta_g \circ (G\alpha\,\eta_X) holds =
                        (\eta_YF\alpha) \circ \eta_f:$
$$
\xymatrix@R=3ex@C=4ex
    {
     FX \ar[rr]^{\eta_X}
        \ar[dd]_{Fg}
  && GX \ar@<-10pt>[dd]_{Gf}
        \ar@<10pt>[dd]^{Gg \hspace{4ex} = \hspace{3ex}}
        \ar@{}[dd]|{\underset{\Longrightarrow}{G\alpha}}
        \ar@{}[ddll]|{\rotatebox[origin=c]{45}{$\Leftarrow$}
                                                   \eta_{g}}
\\ \\
     FY \ar[rr]_{\eta_Y}
  && GY}
\xymatrix@R=3ex@C=4ex
    {
     FX \ar[rr]^{\eta_X}
        \ar@<-10pt>[dd]_{Fg}
        \ar@<10pt>[dd]^{ff}
        \ar@{}[dd]|{\underset{\Longleftarrow}{F\alpha}}
  && GX \ar[dd]^{Gf}
        \ar@{}[ddll]|{\rotatebox[origin=c]{45}{$\Leftarrow$}
        \eta_f}
\\ \\
     FY \ar[rr]_{\eta_Y}
  && GY}
$$
\end{enumerate}
A \emph{2-natural} transformation is a pseudonatural transformation such that $\eta_f$ is the identity for every $f$, in which case the first two conditions are trivial and the third is the axiom of 2-naturalness.
    
\vspace{1ex}

Suppose now that $\tau,\sigma: F \Longrightarrow G$ are pseudonatural transformations between 2-functors $F, G: \mathscr{C} \Longrightarrow \mathscr{D}$. A \emph{modification} \mbox{$\mu : \tau \longrightarrow \sigma$ assigns to each object $X$} of $\mathscr{C}$ a 2-cell $\mu_X : \tau_X \Longrightarrow \sigma_X$ in $\mathscr{D}$, such that for
any arrow $f: X \longrightarrow Y$
the equality $\sigma_f \circ (Gf \,\mu_X) = (\mu_Y Ff) \circ \tau_f$ is verified
 :
$$
\xymatrix@R=3ex@C=4ex
    {
     FX 
        \ar@<-7pt>[rr]_{\sigma_X}
        \ar@<7pt>[rr]^{\tau_X}
        \ar@{}[rr]|{\Downarrow \, \mu_X}
        \ar@{}[rrdd]|{\rotatebox[origin=c]{45}{$\Leftarrow$}
                                                 \sigma_{f}}
        \ar[dd]^{ff}
  && GX \ar[dd]^{Gf \hspace{4ex} = \hspace{3ex}}
\\ \\
     FY \ar[rr]_{\sigma_Y}
  && GY}
\xymatrix@R=3ex@C=4ex
    {
     FX \ar[dd]^{Ff}
        \ar[rr]^{\tau_X}
  && GX \ar[dd]^{Gf}
        \ar@{}[ddll]|{\rotatebox[origin=c]{45}{$\Leftarrow$}
        \tau_f}
\\ \\
     FY 
        \ar@<-7pt>[rr]_{\sigma_Y}
        \ar@<7pt>[rr]^{\tau_Y}
        \ar@{}[rr]|{\Downarrow \, \mu_Y}
  && GY}
$$
When $\tau,\,\sigma$ are 2-natural, the above equality reduces to $\mu_Y Ff = Gf \,\mu_X$
    
\end{definitions}

\begin{notation} \label{strict_&_pseudo}
2-natural (pseudonatural) transformations and modifications compose vertically and horizontally (see, eg, \cite{Bak} I.2.4, p. 25).
If $\mathscr{C}$ and $\mathscr{D}$ are 2-categories, we denote by$Hom_p(\mathscr{C}, \mathscr{D})$ the 2-category  where the objects are 2-functors from $\mathscr{C}$ into $\mathscr{D}$, the arrows are the pseudonatural transformations and the 2-cells are the modifications, and
we denote $Hom_s(\mathscr{C}, \mathscr{D})$ when we consider only the 2-natural transformations.
\end{notation}

\subsection{Equivalences}

\begin{definition}
We say that an arrow $f: X \mr{} Y$ in a 2-category is an \emph{equivalence} if there is an arrow $g: Y \mr{} X$ (which is called \emph{quasi -inverse}) and two invertible cells $\eta: id_X \Rightarrow gf$, $\varepsilon: fg \Rightarrow id_Y $.
\end{definition}
The quasi-inverse $g$ is determined up to an invertible 2-cell, and one can allways choose $g$, $\eta$,
    $\varepsilon$ such that the \emph{triangular equations} are verified
    $(\varepsilon\,f) \circ (f\,\eta) = id_f$
    $(g\,\varepsilon) \circ (\eta\,g) = id_g$.
    Although any quasi-inverse is also an equivalence, we consider important to consider that equivalences have a direction.

Recall the following non-trivial and fundamental fact:
\begin{proposition} \label{ffes=equi}
In the 2-category {\bf Cat} a functor
$F: \cc{X} \mr{F} \cc{Y}$ is an equivalence if and only if it is fully faithful and essentially surjective.
\end{proposition}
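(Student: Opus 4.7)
The plan is to prove both implications separately, with the harder direction being the reverse one.

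For the forward direction, I would assume $F$ is an equivalence with quasi-inverse $G: \cc{Y} \mr{} \cc{X}$ and invertible natural transformations $\eta: id_{\cc{X}} \Rightarrow GF$, $\varepsilon: FG \Rightarrow id_{\cc{Y}}$. Essential surjectivity is immediate, since for any $Y \in \cc{Y}$ the component $\varepsilon_Y: FGY \mr{} Y$ is an isomorphism, witnessing $Y \simeq FGY$. For full faithfulness, I would exhibit the inverse of $F_{X,X'}: \cc{X}(X,X') \mr{} \cc{Y}(FX, FX')$ as the map sending $g: FX \mr{} FX'$ to $\eta_{X'}^{-1} \circ G(g) \circ \eta_X$; naturality of $\eta$, together with functoriality of $G$ applied to $Ff$, verifies that this is a two-sided inverse.

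For the backward direction, I would assume $F$ is fully faithful and essentially surjective, and construct $G$, $\eta$, $\varepsilon$ explicitly. Using the axiom of choice, for each $Y \in \cc{Y}$ I select an object $GY \in \cc{X}$ together with an isomorphism $\varepsilon_Y: FGY \mr{} Y$. On morphisms, for each $g: Y \mr{} Y'$ in $\cc{Y}$ define $G(g): GY \mr{} GY'$ as the unique arrow in $\cc{X}$ satisfying $F(G(g)) = \varepsilon_{Y'}^{-1} \circ g \circ \varepsilon_Y$; both existence and uniqueness follow from full faithfulness. The uniqueness clause then forces $G(id_Y) = id_{GY}$ and $G(g' \circ g) = G(g') \circ G(g)$, so $G$ is a functor, and the defining equation $F(G(g)) \circ \varepsilon_Y = \varepsilon_{Y'} \circ g$ is exactly naturality of $\varepsilon$. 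To produce $\eta$, for each $X \in \cc{X}$ let $\eta_X: X \mr{} GFX$ be the unique morphism whose image under $F$ equals $\varepsilon_{FX}^{-1}$; this is automatically an isomorphism, and naturality again follows from the uniqueness afforded by full faithfulness, applied after $F$.

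The main obstacle is not conceptual but bureaucratic: every verification (functoriality of $G$, naturality of $\eta$ and $\varepsilon$, invertibility of $\eta_X$) reduces to the same slogan, namely that two morphisms in $\cc{X}$ coincide as soon as their images under $F$ do. Beyond a single essential appeal to the axiom of choice in selecting $(GY,\varepsilon_Y)$, no real difficulty arises, and the argument is a disciplined bookkeeping exercise.
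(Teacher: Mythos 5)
The paper does not prove this proposition at all: it is explicitly recalled as a known ``non-trivial and fundamental fact'' about $\mathbf{Cat}$, so there is no internal argument to compare yours against. Your proof is the standard textbook one and is correct; the backward direction in particular (choosing $(GY,\varepsilon_Y)$ by choice, defining $G$ on arrows via full faithfulness, and reading off functoriality and naturality from the uniqueness clause) is exactly right. The only place where your sketch compresses a real step is fullness in the forward direction: showing that $g\mapsto \eta_{X'}^{-1}\circ G(g)\circ\eta_X$ is a \emph{right} inverse of $F_{X,X'}$ (i.e.\ that $F(\eta_{X'}^{-1}\circ G(g)\circ\eta_X)=g$) does not follow from naturality of $\eta$ applied to arrows of the form $Ff$ alone; the usual completion first observes that $G$ is itself faithful (by the symmetric left-inverse argument) and then cancels $G$ after computing $GF(\eta_{X'}^{-1}\circ G(g)\circ\eta_X)=G(g)$ via naturality of $\eta$. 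This is a routine addition, not a flaw in the approach.
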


If $s: X \longrightarrow Y$ on a 2-category $\mathscr{C}$ is an equivalence, then it induces equivalences on the categories of maps:
For each object $Z$ in $\mathscr{C}$, the functor $s^*: \mathscr{C}[Y, Z] \longrightarrow \mathscr{C}[X, Z]$ given by the precomposition  $f \longmapsto fs$ is an equivalence of categories, with quasi-inverse $t^*$ given by any quasi-inverse $t$ of $s$.
In the same way it can be seen that the post-composition
\mbox{$s_*: \mathscr{C}[Z, X] \longrightarrow \mathscr{C}[Z, Y]$} is an equivalence of categories.
Furthermore, we have the following
\begin{proposition} \label{s*_equivalence}
    Given $s:X \longrightarrow Y$ in a 2-category $\mathscr{C}$, then $s$ is an equivalence if and only if $s^*: \mathscr{C}[Y , Z] \longrightarrow \mathscr{C}[X, Z]$ is an equivalence for all $Z$ in $\mathscr{C}$ if and only if $s_*: \mathscr{C}[Z, X] \longrightarrow \mathscr{C}[Z, Y]$ is an equivalence for all \mbox{$Z$ in $\mathscr{C}$.}
\end{proposition}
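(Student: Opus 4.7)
The forward implications are already observed just above the proposition (a quasi-inverse $t$ of $s$ induces $t^*$ and $t_*$ as quasi-inverses of $s^*$ and $s_*$), so the plan is to prove the two converses. I would do the $s^*$ case in detail and note that the $s_*$ case follows by applying the same argument in $\mathscr{C}^{op}$, which interchanges pre- and post-composition.

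Assume $s^*_Z : \mathscr{C}[Y,Z] \longrightarrow \mathscr{C}[X,Z]$ is an equivalence of categories for every $Z$. The strategy is Yoneda-flavoured: use essential surjectivity at $Z=X$ to manufacture a candidate quasi-inverse together with a unit, and then use fully faithfulness at $Z=Y$ to transport the obvious iso on the $X$-side back to a counit on the $Y$-side. Concretely, in Step~1 I would specialise to $Z=X$ and apply Proposition~\ref{ffes=equi}: essential surjectivity of $s^*_X$ at the object $id_X \in \mathscr{C}[X,X]$ produces an arrow $t: Y \longrightarrow X$ together with an invertible 2-cell $\eta: id_X \Rightarrow ts = s^*_X(t)$.

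In Step~2 I would specialise to $Z=Y$. Whiskering $\eta$ on the left by $s$ gives an invertible 2-cell $s\eta : s \Rightarrow sts$ in $\mathscr{C}[X,Y]$. Reading its endpoints as $s = s^*_Y(id_Y)$ and $sts = s^*_Y(st)$, fully faithfulness of $s^*_Y$ (again by Proposition~\ref{ffes=equi}) lifts $s\eta$ uniquely to a 2-cell $\varepsilon': id_Y \Rightarrow st$ in $\mathscr{C}[Y,Y]$. Since a fully faithful functor reflects isomorphisms and $s^*_Y(\varepsilon') = s\eta$ is invertible, $\varepsilon'$ is invertible, and I set $\varepsilon = (\varepsilon')^{-1} : st \Rightarrow id_Y$. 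Together with $\eta$ this exhibits $s$ as an equivalence with quasi-inverse $t$; the triangular equations need not be verified from scratch, since the text records that $(g,\eta,\varepsilon)$ can always be adjusted afterwards to satisfy them.

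The only delicate point is the lifting argument in Step~2, where one has to be careful that what is being transported along $s^*_Y$ really is an invertible 2-cell between images of two arrows of $\mathscr{C}[Y,Y]$; everything else is routine. I do not anticipate a genuine obstacle, essentially because once the unit is produced on $X$, the fully faithful half of each $s^*_Z$-equivalence is strong enough to produce the counit on $Y$ for free.
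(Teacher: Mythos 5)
Your argument is correct: producing $t$ and the unit $\eta$ from essential surjectivity of $s^*_X$ at $id_X$, and then lifting the invertible 2-cell $s\eta : s^*_Y(id_Y) \Rightarrow s^*_Y(st)$ through the fully faithful $s^*_Y$ to get an invertible counit, is the standard Yoneda-style proof, and the dualization to $s_*$ via $\mathscr{C}^{op}$ is legitimate. The paper in fact states Proposition \ref{s*_equivalence} without proof (treating it as a known fact, with only the easy forward direction sketched in the preceding paragraph), so there is nothing to compare against; your write-up supplies exactly the missing converse, and since the paper's Definition of equivalence does not demand the triangle identities, your final remark about adjusting the data afterwards is consistent with the text.
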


    \begin{definition} \label{pseudoequiv}
A 2-functor $F: \mathscr{C} \longrightarrow \mathscr{D}$ is a \emph{pseudoequivalence of \mbox{2-categories}} if there exist $G: \mathscr{D} \longrightarrow \mathscr{C }$ and pseudo-natural transformations $\eta: Id_{\mathscr{C}} \Longrightarrow GF$, $\theta: FG \Longrightarrow Id_{\mathscr{D}}$ which are equivalents in $Hom_p(\mathscr{C} , \mathscr{C})$ and in $Hom_p(\mathscr{D}, \mathscr{D})$, respectively.
The 2-functor $F$ would be a pseudoequivalence of 2-categories in the strict sense when $\eta$ and $\theta$ are equivalences in $Hom_s(\mathscr{C}, \mathscr{C })$ and in $Hom_s(\mathscr{D}, \mathscr{D})$, respectively (notation
\ref{strict_&_pseudo}).
    \end{definition}

The following fact is frequently used in the literature, for a detailed demonstration of it see \cite{GJ}.
\begin{proposition} \label{prop_infinite}
   Let $\eta: F \Longrightarrow G : \mathscr{C} \longrightarrow \mathscr{D}$ be a pseudonatural transformation between 2-functors. Then $\eta$ is an equivalence on $Hom_p(\mathscr{C}, \mathscr{D})$ if and only if each component $\eta_X$ is an equivalence on the 2-category 
   $\mathscr{D}$.
\end{proposition}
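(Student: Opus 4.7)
The plan is to treat the two directions separately. For the easy (``only if'') direction, suppose $\eta$ is an equivalence in $Hom_p(\mathscr{C},\mathscr{D})$ with quasi-inverse $\theta:G\Longrightarrow F$ and invertible modifications $\mu: id_F\Rightarrow\theta\eta$ and $\nu:\eta\theta\Rightarrow id_G$. Evaluating at each object $X$ gives invertible 2-cells $\mu_X: id_{FX}\Rightarrow\theta_X\eta_X$ and $\nu_X:\eta_X\theta_X\Rightarrow id_{GX}$ in $\mathscr{D}$, exhibiting $\theta_X$ as a quasi-inverse of $\eta_X$.

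For the substantive ``if'' direction, I would build a pseudonatural transformation $\theta:G\Longrightarrow F$ from a pointwise choice. For each object $X$ pick a quasi-inverse $\theta_X$ of $\eta_X$ together with invertible 2-cells $\alpha_X: id_{FX}\Rightarrow\theta_X\eta_X$ and $\beta_X:\eta_X\theta_X\Rightarrow id_{GX}$ satisfying the triangular equations. For each arrow $f:X\to Y$ define the structure 2-cell of $\theta$ by the pasting
\[
\theta_f \;:=\; (\theta_Y\,Gf\,\beta_X)\circ(\theta_Y\,\eta_f^{-1}\,\theta_X)\circ(\alpha_Y\,Ff\,\theta_X) : Ff\,\theta_X \Longrightarrow \theta_Y\,Gf,
\]
which is invertible as a composite of invertible 2-cells. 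The families $\alpha=\{\alpha_X\}_X$ and $\beta=\{\beta_X\}_X$ are the candidate modifications $id_F\Rightarrow\theta\eta$ and $\eta\theta\Rightarrow id_G$ that will witness $\eta$ as an equivalence.

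The bulk of the work is then the verification of the three pseudonaturality axioms of Definition \ref{def_pseudo} for $\theta$, plus the modification axiom for $\alpha$ and $\beta$. Normalization on identities is immediate from that of $\eta$, and the 2-cell axiom for $\theta$ follows by a direct pasting from the 2-cell axiom for $\eta$. I expect the main obstacle to be the composition axiom: after unfolding, one must show that $\theta_{gf}$ equals $(\theta_g\,Gf)\circ(Fg\,\theta_f)$ by reducing both sides to the same diagram, using the composition axiom for $\eta$ (applied through $\eta^{-1}$), the interchange law, and the triangular equations to cancel the redundant factor $\eta_Y\theta_Y$ introduced in the middle. The modification axiom for $\alpha$ and $\beta$ is then a simpler pasting of the same flavour, reducing via the triangular equations to the pseudonaturality square for $\eta_f$ paired with its inverse. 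Once these coherence verifications are complete, the triple $(\theta,\alpha,\beta)$ exhibits $\eta$ as an equivalence in $Hom_p(\mathscr{C},\mathscr{D})$.
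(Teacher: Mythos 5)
Your proposal is correct in outline, but note that the paper itself gives no proof of this proposition: it is stated as a known fact with a citation to \cite{GJ} for a detailed demonstration, so there is no in-paper argument to compare against. Your construction is the standard one that such a reference would contain: the formula $\theta_f = (\theta_Y\,Gf\,\beta_X)\circ(\theta_Y\,\eta_f^{-1}\,\theta_X)\circ(\alpha_Y\,Ff\,\theta_X)$ is well-typed for the paper's convention $\eta_f: Gf\,\eta_X \Rightarrow \eta_Y Ff$, the identity axiom for $\theta$ reduces exactly to the triangle identity $(\theta_X\,\beta_X)\circ(\alpha_X\,\theta_X)=id_{\theta_X}$ (so insisting on an adjoint quasi-inverse is the right move), and the remaining coherence checks go through by the pastings you describe.
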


We note that the previous proposition does not hold for 2-natural transformations. That is, a 2-natural transformation that is a pointwise equivalence is not necessarily an equivalence on $Hom_s(\mathscr{C}, \mathscr{D})$.

\subsection{2-localizations}
Adapting to 2-categories the  definition in \cite{Pro} for bi-categories we have
\begin{definition} \label{2-location}
    Let $\mathscr{C}$ be a 2-category and $\Sigma$ be a subclass of morphisms. The \emph{2-localization} of $\mathscr{C}$ at $\Sigma$ is a 2-category $\mathscr{C}[\Sigma^{-1}]$ together with a 2-functor $q: \mathscr{C} \longrightarrow \mathscr{C}[\Sigma^{-1}]$ such that
    \begin{enumerate}
        \item $q(s)$ is an equivalence for all $s \in \Sigma$;
        \item for every 2-category $\mathscr{D}$, $q$ induces a pseudoequivalence of 2-categories given by the precomposition
            $$q^*: Hom_p(\mathscr{C}[\Sigma^{-1}], \mathscr{D}) \longrightarrow Hom_{p}(\mathscr{C}, \mathscr{D})_+ \,,$$
        where $Hom_{p}(\mathscr{C}, \mathscr{D})_+$ consists of the 2-functors that send the elements of $\Sigma$ into equivalences.
    \end{enumerate}
\end{definition}
   The 2-category $\mathscr{C}[\Sigma^{-1}]$ is characterised up to  pseudoequivalences.


\subsection{Model Categories} \label{MC}

The model categories were introduced by Quillen (\cite{Qui}), the   definition that we will use is stronger than the original definition and it is the one that Quillen also introduced with the name of \emph {closed model category}.

\begin{definition} \label{facelift}
    Let $\mathscr{X}$ be a category. We say that a map $f$ on $\mathscr{X}$ has the \emph{left-lifting property} with respect to a map $g$ if every problem of the form
$$
\xymatrix
       {
        \cdot \ar[r] \ar[d]_{f} & \cdot  \ar[d]^{g}
      \\
        \cdot \ar[r] \ar@{-->}[ur]^{h} & {\cdot}
        }
$$
has a solution $h$, not necessarily unique, which makes both triangles commute. Equivalently, we say that $g$ has the \emph{right-lifting property} with respect to $f$.
\end{definition}

\begin{definition} \label{retract}
    Given $f:X \longrightarrow Y$, $g: X' \longrightarrow Y'$ in a category $\mathscr{X}$, then \emph{f is a retract of g} if there is a commutative diagram:
    \begin{center}
        $\xymatrix{
        X \ar[r] \ar@/^1pc/[rr]^{id_X} \ar[d]_{f} & X' \ar[r] \ar[d]_{g} & X \ar [d]^{f} \\
        Y \ar[r] \ar@/_1pc/[rr]_{id_Y} & Y' \ar[r] & Y \\
        }$
    \end{center}
\end{definition}

The following is a definition considered in \cite{JG}, \cite{Dwy}, and introduced by Quillen (\cite{Qui}) in a different but equivalent way,

\begin{definition} \label{model_cat}
    A \emph{model category} is a category $\mathscr{C}$ provided with three classes of morphisms $\mathcal{F}$, $\mathcal{C}o\!f$ and $\mathcal {W}$, which we call, respectively, Fibrations, Cofibrations and Weak Equivalences, satisfying the following axioms.
    \renewcommand{\labelenumi}{M\arabic{enumi}.}
    \renewcommand{\labelenumii}{\roman{enumii}.}
    \begin{enumerate}
        \item $\mathscr{C}$ has finite limits and finite colimits. 
\footnote{Nowadays many authors require the existence of all \emph{small} limits and colimits.}
        \item If a cofibration is also a weak equivalence, then it has the left lifting property with respect to any fibration.
        
        If a fibration is also a weak equivalence, then it has the right-lifting property with respect to any cofibration.
        \item If $f$ is a retract of $g$ and $g$ is either a fibration, a cofibration, or a weak equivalence, then so is $f$. Furthermore, the three classes are closed by composition and contain the identities.
        \item Every map $f$ in $\mathscr{C}$ can be factored $s=pi$, where $p$ is a fibration and $i$ a cofibration, in two ways, one with $p$ a weak equivalence, the other with $i$ a weak equivalence.
        \item Sean \begin{tikzcd}
                        X \arrow[r, "f"] & Y \arrow[r, "g"] & Z
                    \end{tikzcd}
         in $\mathscr{C}$. If any two of the three maps $f$, $g$ and $gf$ are weak equivalences, then all three are (this axiom is often called the property of $"3 \; for \; 2"$).
    \end{enumerate}
\end{definition}
\begin{sinnada} {\bf Determination.}
Any two of the three distinguished classes of morphisms determine the third, that is, the model structure is determined by two of the three distinguished classes $\mathcal{F}$, $\mathcal{C}o\!f$, $\mathcal{ W}$.
\end{sinnada}
\begin{sinnada} \label{duality}
{\bf Duality.} The axioms of the \ref{model_cat} definition are self-dual. Given a model category $\mathscr{C}$, the opposite category $\cc{C}^{op}$ admits a model structure, where
$\cc{W}^{op} = \cc{W}$,
$\cc{F}^{op} = \mathcal{C}o\!f$,
$(\mathcal{C}o\!f)^{op} = \cc{F}$.
\end{sinnada}

We note that, by axiom M1, in a model category we always have an initial object and a terminal object, denoted $0$ and $1$, respectively.

\begin{definition}
   An object $X$ in a model category $\mathscr{C}$ is \emph{fibrant} if $X \longrightarrow 1$ is a fibration, and \emph{cofibrant} if $0 \longrightarrow X$ is a cofibration, a fibration or a cofibration are \emph{trivial} if they are also weak equivalences.
\end{definition}

\begin{notation}
    We will use the following notation:
    \renewcommand{\labelenumi}{\roman{enumi}.}
    \begin{enumerate}
  \item $\cdot \mre{} \cdot$ (weak equivalences)
  \item $\cdot \mrf{} \cdot$ (fibrations)
    \hspace{6ex} $\cdot \mrfe{} \cdot$ (trivial fibrations)
  \item $\cdot \mrc{} \cdot$ (cofibrations)
    \hspace{3ex} $\cdot \mrce{} \cdot$ (trivial cofibrations)
    \end{enumerate}
\end{notation}

Let us recall the following results, whose proofs can be seen, for example, in
\cite{JG}.


\begin{sinnada} \label{lifting_sii}
    Let $\mathscr{C}$ be a model category.
    \renewcommand{\labelenumi}{\roman{enumi}.}
    \begin{enumerate}
        \item A map in $\mathscr{C}$ is a cofibration (trivial cofibration) if and only if it has the left-lifting property with respect to all trivial fibration (fibration).
        \item A morphism in $\mathscr{C}$ is a fibration (trivial fibration) if and only if it has the right-lifting property with respect to every trivial cofibration (cofibration).
    \end{enumerate}
\end{sinnada}

\begin{sinnada}
A map $f$ is a weak equivalence if and only if it admits a factorization $f=pi$, where $p$ is a trivial fibration and $i$ is a trivial cofibration.
\end{sinnada}

\begin{sinnada} \label{stablebypullback}
    If $\mathscr{C}$ is a model category, then the classes of cofibrations and trivial cofibrations are both pushout stable. Fibrations and trivial fibrations are both pullback-stable.
\end{sinnada}

\begin{sinnada}
    While $\mathcal{F}$ ($\mathcal{F}\cap \mathcal{W}$) and $\mathcal{C}o\!f$ ($co\mathcal{F}\cap \mathcal{W}$ ) are classes closed by pullbacks and pushouts, respectively, it is \emph{not} true that the class $\mathcal{W}$ of weak equivalences has any of these properties.
\end{sinnada}

\begin{definition}
    A \emph{weak factorization system} on a category $\mathscr{X}$ is a pair $(\mathcal{L}, \mathcal{R})$ of distinguished classes of maps such that
    \begin{enumerate}
        \item Every map $h$ in $\mathscr{X}$ can be factored as $h=gf$, with $f \in \mathcal{L}$ and $g \in \mathcal{R}$.
        \item $\mathcal{L}$ is precisely the class of maps that have the left-lifting property with respect to every map of $\mathcal{R}$.
        
            $\mathcal{R}$ is the class of maps that have the right-rise property with respect to every map in $\mathcal{L}$.
    \end{enumerate}
\end{definition}

\emph{If $\mathscr{C}$ is a model category, the classes $(\mathcal{C} \cap \mathcal{W}, \mathcal{F})$ and $(\mathcal{C}, \mathcal{F} \cap \mathcal{W})$ are both a weak factorization system.}

\begin{definition} \label{functorial} ${}$

{\bf 1.} We say that a weak factorization system 
$(\mathcal{L}, \mathcal{R})$
is \emph{functorial} if for each $f$ there is a chosen factorization
$(\lambda_f, \rho_f)$,
$\lambda_f \in \mathcal{L}$,
$\rho_f \in \mathcal{R}$,
$f = \rho_f \circ \lambda_f$,
in such a way that every time we have a commutative diagram like here below on the left, we have a map
$F(u, v)$ making commutative the \mbox{diagram on the right,}
$$
\xymatrix@C=8ex@R=3ex
       {
        \cdot \ar[r]^{u} \ar[d]_{f}
      & \cdot \ar[d]^{g}
        \\
        \cdot \ar[r]_{v}
     & \cdot
       }
\hspace{12ex}
\xymatrix@C=8ex@R=3ex
       {
        \ar[r]^{u} \cdot \ar[d]_{\lambda_f} \ar@/_1.8pc/[dd]_{f}
     & \cdot \ar@/^1.8pc/[dd]^{g} \ar[d]^{\lambda_g}
     \\
        \cdot \ar[r]_{F(u, v)} \ar[d]_{\rho_f}
     & \cdot \ar[d]^{\rho_g}
     \\
        \cdot \ar[r]_{v}
     & \cdot
       }
$$
and such that $F(u, v)$ depends functorialy on $u$ and $v$, that is, if $f=g$ then \mbox{$F(id, id)=id$,} and 
\mbox{$F(u\circ u', v\circ v') = F(u, v) \circ F(u', v')$}.

\vspace{1ex}

{\bf 2.} We say that the factorization is \emph{normal} if for
$f \in \cc{L}$, $\lambda_f = id$, and for
$f \in \cc{R}$, $\rho_f = id$.

\vspace{1ex}

{\bf 3. Exercise.} \emph{The functorial factorization delivered by the small object argument in cofibrantly generated categories is normal. Furthermore, given any $f$, it is \emph{initial} in the poset of the factorizations of $f$.} We leave it to the reader to verify this by reviewing any of the proofs of the small object argument in the literature.
\end{definition}

\begin{remark}
    We denote by $\overrightarrow{\mathscr{C}}$ the category whose objects are the morphisms of $\mathscr{C}$ and an arrow from $f$ to $g$ in $\overrightarrow{\mathscr{C}} $ is a pair $(u, v)$ of maps of $\mathscr{C}$ such that $gu=vf$. Let $dom, codom: \overrightarrow{\mathscr{C}} \longrightarrow \mathscr{C}$ be the functors that choose the domain arrow and the codomain arrow respectively. The above definition tells us precisely that a system $(\mathcal{L}, \mathcal{R})$ is functorial if there exists a functor
$F:\overrightarrow{\mathscr{C}} \longrightarrow \mathscr{C}$ and natural transformations $\lambda: dom \longrightarrow F$ and $\rho: F \longrightarrow codom$ such that for all $f$ in
 $\mathscr{C}$ we have
$$
 \xymatrix@C=3ex@R=3ex
       {
        dom(f) \ar[rr]^{f} \ar[dr]_{\lambda_f}
     && codom(f)
      \\
      & F(f) \ar[ur]_{\rho_f}
       }
 \hspace{4ex} with \;\;
 \lambda_f \in \mathcal{L}, \;\; \rho_f \in \mathcal{R}.
$$
    We say that $(F, \lambda, \rho)$ is a \emph{functorial realization} for the weak factorization $(\mathcal{L}, \mathcal{R})$. See for example \cite{Riehl}.
\end{remark}

\section{The homotopy 2-category \texorpdfstring{$\HAo$}{}}
\label{hacheo}

In this section, we adapt to the context of 2-localization of 1-categories using  2-functors 
the results on bilocalizations of bicategories using pseudofunctors developed in \cite{e.d.2}. 

We construct the 2-localization of a category $\sr{A}$  at  a class of morphisms $\Sigma$ containing the identities. The novel aspect over \cite{Pro} is that we do it using a notion of cylinders and homotopies, so we will call the members of $\Sigma$ \emph{weak equivalences}, and the 2-localization the homotopy 2-category. We use a definition of cylinder more general and less rigid than Quillen's definition, and consequently we will obtain a more general definition of homotopy.

Let us recall that the problem we want to solve consists of constructing a 2-category $\HAo$ together with a 2-functor $i: \sr{A} \longrightarrow \HAo$ which has the universal property of a 2-localization by Definition \ref{2-location}.

\subsection{Construction of \texorpdfstring{$\HAo$}{}}
\begin{definition} \label{cylindrical_homotopias}
      A \emph{cylinder} $C=(W, Z, d_0, d_1, s, x)$ for an object $X$ in $\mathscr{A}$ is a configuration
        $$\xymatrix{
     X \ar@<3pt>[rr]^{d_0}
        \ar@<-3pt>[rr]_{d_1}
        \ar[dr]_{x} && W,
        \ar[dl]^{s}| \circ\\
        & Z \\ }$$
    where $s \in \Sigma$ and $sd_0=sd_1=x$.

\vspace{1ex}
    
    A \emph{(left) homotopy} $H=(C, h)$ from $f$ to $g$ with cylinder  \mbox{$C=(W, Z, d_0, d_1, s, x)$} is an arrow $h:W \longrightarrow Y$ satisfying $hd_0=f$ and $hd_1=g$. We denote
    $H:\xymatrix{f \ar@2{~>}[r] & g}$.
\begin{equation} \label{diagramhomotopia}
    \xymatrix{
     X \ar@/^2pc/@<6pt>[rrrr]^{f}
        \ar@/^2pc/[rrrr]_{g}
        \ar@<2pt>[rr]^{d_0}
        \ar@<-3pt>[rr]_{d_1}
        \ar[dr]_{x} && W
        \ar[rr]^{h} \ar[dl]^{s}|\circ && Y. \\
        & Z \\ }
\end{equation}
\end{definition}
Throughout this article (except in the appendix) we work only with left homotopies, and thus omit to write the word "left".

{
We would like to take the homotopies as the 2-cells of a 2-category with the same objects and arrows as $\cc{A}$, but it turns out that the 2-category axioms do not hold. It is necessary, therefore, to define an adequate equivalence relation between homotopies, so that if we take the equivalence classes as \mbox{2-cells,} then the 2-category axioms are verified. It turns out that the \mbox{2-category} thus obtained has invertible 2-cells, i.e. it is a $(2,\,1)$-category in contemporary terminology (see lemma \ref{2_cells_inv}).
}

\subsubsection{Equivalence classes of homotopies.}
The equivalence relation that we are looking for comes from the following observation:

\begin{remark}
Let $\mathscr{D}$ be a 2-category and consider in $\mathscr{D}$ the diagram \ref{diagramhomotopia} now with
$s$ a true equivalence.
{In this case} $s$ induces a fully faithful functor, 
\begin{center}
    $\xymatrix{ \mathscr{D}[X, W] \ar[rr]^{s_{*}} && \mathscr{D}[X, Z]
    }$
\end{center}
(see \ref{ffes=equi}, \ref{s*_equivalence}),
so that we get a unique 2-cell $\widehat{C}: d_0 \Longrightarrow d_1$ such that $s\widehat{C}=x$.

Let $\xymatrix{ \mathscr{A} \ar[r]^{F} & \mathscr{D}}$ be a 2-functor that sends the arrows of the class $\Sigma$ into equivalences, and let $H= (C, h)$ be a homotopy from $f$ to $g$ on $\sr{A}$ with cylinder $C=(W, Z, d_0, d_1, s, x)$.
Applying $F$ to the diagram \ref{diagramhomotopia} above yields
$$
\xymatrix@R=3ex@C=3ex
        {
     FX \ar@/^2pc/@<6pt>[rrrr]^{Ff}
        \ar@/^2pc/[rrrr]_{Fg}
        \ar@<2pt>[rr]^{Fd_0}
        \ar@<-3pt>[rr]_{Fd_1}
        \ar[dr]_{Fx} && FW
        \ar[rr]^{Fh} \ar[dl]^{Fs} && FY, \\
        & FZ \\}
$$
which is a commutative diagram in $\mathscr{D}$.
Since $Fs$ is an equivalence, there is a unique 2-cell $\widehat{FC}: Fd_0 \Longrightarrow Fd_1$ such that $Fs\widehat{FC}=Fx$.

\end{remark}

\begin{definition} \label{FH}
    Given a homotopy $H=(C, h)$ on $\sr{A}$ and given a 2-functor $F: \sr{A} \longrightarrow \mathscr{D}$ which sends the class $ \Sigma$ into equivalences, we define a 2-cell $\widehat{FH}$ in $\mathscr{D}$ as $\widehat{FH}:=Fh\widehat{FC} : Ff \Longrightarrow Fg$, where $ \widehat{FC}: Fd_0 \Longrightarrow Fd_1$ is the unique 2-cell that satisfies $Fs\widehat{FC}=Fx$.
\end{definition}


\begin{definition} \label{rel_adhoc}
    Let $f, g: X \longrightarrow Y$ morphisms in $\sr{A}$ and $H, H': \xymatrix{f \ar@2{~>}[r] & g}$ be two homotopies. We say that $H \sim H'$ if and only if $\widehat{FH}=\widehat{FH'}$ for every 2-functor $F: \sr{A} \longrightarrow \mathscr{D}$ such that $F(\Sigma) \subseteq Equiv(\mathscr{D})$, for every 2-category $\mathscr{D}$.

\vspace{1ex}
    
    \emph{{Note that this definition of equivalence is not intrinsic to $\sr{A}$, it is \emph{semantic} in the sense that it quantifies over 2-functors in the category of 2-categories.}}
\end{definition}

It is clear that we can establish an equivalence relation between \mbox{sequences} of composable homotopies. We define a 2-cell in $\HAo$ as the class $[H_n,..., H_1]$ of a finite sequence of homotopies $\xymatrix{ f_0 \ar@2{~>}[r]^ (.3){H_1} & f_{1} \ldots f_{n-1} \ar@2{~>}[r]^(.7){H_n} & f_n }$, where:

$(H_n, ..., H_1) \sim (K_m, ..., K_1)$ if and only if 
for every $2$-functor $F:\sr{A} \mr{} \sr{D}$, 
$\widehat{FH_n} \circ ... \circ \widehat{FH_1} = \widehat{ FK_m}\circ ... \circ \widehat{FK_1}$.

\vspace{1ex}

Let us see that $\HAo$ is thus effectively a 2-category.
\subsubsection{Vertical Composition}
$\HAo(X,Y)$ is a category for each pair of objects $X$, $Y$ in $\sr{A}$:

We define the vertical composition of 2-cells as the juxtaposition of the sequences.
\begin{center}
    $[H_n, ..., H_1] \circ [K_m, ..., K_1] =
 [H_n, ..., H_1,\,K_m, ..., K_1]$.
\end{center}

The associativity is an immediate consequence of the associativity of the vertical composition in $\mathscr{D}$.

   Note that  the single-homotopy classes generate the 2-cells in $\HAo$.

\begin{sinnada} \label{hpy_identity}
{\bf Identities for the vertical composition.}
Let $f \in \sr{A}[X,Y]$ and let $H: \xymatrix{f \ar@2{~>}[r] & f}$ be a homotopy with cylinder $C$ .
By how the equivalence relation between homotopy sequences is defined, it is clear that
    $[H]$ will be the identity of $f$ in $\HAo(X,Y)$ if and only if
    $\widehat{FH}=Ff$ in $\mathscr{D}(FX,FY)$ for all $F \in Hom_+(\sr{A}, \mathscr{D}).$

Thus, for example, either of the following two homotopies determines the vertical identity for $f$:
$$
\xymatrix@R=3ex@C=3ex{
     X \ar@/^2pc/@<6pt>[rrrr]^{f}
        \ar@/^2pc/[rrrr]_{f}
        \ar@<3pt>[rr]^{id_X}
        \ar@<-3pt>[rr]_{id_X}
        \ar[dr]_{id_X} && X
        \ar[rr]^{f} \ar[dl]^{id_X}|\circ && Y, \\
        & X \\ }
\hspace{3ex}
\xymatrix@R=3ex@C=3ex{
     X \ar@/^2pc/@<6pt>[rrrr]^{f}
        \ar@/^2pc/[rrrr]_{f}
        \ar@<3pt>[rr]^{f}
        \ar@<-3pt>[rr]_{f}
        \ar[dr]_{f} && Y
        \ar[rr]^{id_Y} \ar[dl]^{id_Y}|\circ && Y. \\
        &  Y \\  }
$$
\end{sinnada}   
                                             
\subsubsection{Horizontal composition} \label{horiz_composition_def}
Since we already have a vertical composition, if we define the horizontal compositions $l\,[H]=[I_l]\,[H]$ and $[H]\,r=[H]\,[I_r]$, the horizontal composition of general 2-cells is obtained as follows, see
 Remark \ref{horizontal_comp}:
 $$[H']\,[H] \underset{\text{def}}{=}
 [H']\,g \circ f'\,[H] \underset{(1)}{=}
 g'\,[H] \circ [H']\,f$$
 as long as the equality $(1)$ is verified.

Let 
$
\xymatrix
     {
      X' \ar[r]^{l} 
    & X \ar@<4pt>[r]^{f} \ar@<-3pt>[r]_{g} 
    & Y \ar[r]^{r} & Y' 
      }
$
y $H= (C, h):\xymatrix{f \ar@2{~>}[r] & g}$
a homotopy with cylinder $C=(W, Z, d_0, d_1, s, x)$.
We consider 
$Hl=(Cl, h):$

$ 
\xymatrix
    {
     fl \ar@2{~>}[r] & gl
    }
$
and $rH=(C, rh):$
  
$ 
\xymatrix
     {
      rf \ar@2{~>}[r] 
    & rg
      }   
$
where $Cl=(W, Z, d_0l, d_1l, s, xl)$ is a cylinder for $X'$.
\begin{center}
    $\xymatrix{
     X'\ar[r]^{l} \ar@/_0.5pc/[drr]_{xl}
     & X \ar@/^2pc/@<6pt>[rrrr]^{f}
        \ar@/^2pc/[rrrr]_{g}
        \ar@<2pt>[rr]^{d_0}
        \ar@<-3pt>[rr]_{d_1}
        \ar[dr]_{x} && W
        \ar[rr]^{h} \ar[dl]^{s}|\circ && Y \ar[r]^{r}& Y'\\
        && Z \\ }
     $
\end{center}

It is clear that both $Hl$ and $rH$ are homotopies. In addition, we have the equations
\begin{equation} \label{eq}
    \widehat{F(Hl)}=\widehat{FH}Fl , \hspace{3ex} \widehat{F(rH)}=Fr\widehat{FH},
\end{equation}
since $\widehat{F(Hl)}=Fh\widehat{F(Cl)}$ and $Fs(\widehat{FC}Fl)=(Fs\widehat{FC})Fl=xl$, by the uniqueness of $\widehat{F(Cl)}$ in the definition \ref{FH} we have that $\widehat{F(Cl)}=\widehat{FC}Fl$, and then $\ \widehat{F(Hl)}=\widehat{FH}Fl$. The second equation is obvious, since $H$ and $rH$ have the same cylinder. We define:
\begin{equation} \label{dfeq}
[H]\,l=[Hl], \hspace{3ex} r\,[H]=[rH].
\end{equation}
Now, if $H \sim H'$, then
\begin{center}
$\widehat{F(Hl)}=\widehat{FHFl}=\widehat{FH}Fl=\widehat{FH'}Fl=\widehat{FH'Fl}=\widehat{F(H'l)}$ .
\end{center}
This tells us that the composition with $l$ is well defined, and in the same way we see the well definition of the composition with $r$.

More generally, for any sequence $[H_n, ..., H_1]$ of composable homotopies, we define:
$$
[H_n, ..., H_1]\,l = [H_nl, ..., H_1l], \;\;\;\;
r\,[H_n, ..., H_1] = [rH_n, ..., rH_1]
$$
From the equations \ref{eq}, and from the corresponding valid equation in $\sr{D}$, it follows the required equality $(1)$, that is, the third axiom in the Remark \ref{horizontal_comp}. On the other hand, we have by definition:
\begin{center}
    $([K]\,l)\circ ([H]\,l)=([Kl])\circ ([Hl])=[Kl,Hl]=[K, H]\,l=([ K]\circ [H])\,l$,
\end{center}

\begin{center}
    $[I_f]\,l=[I_fl]=[I_{fl}].$
\end{center}
Thus all the axioms of \ref{horizontal_comp}
are checked, so the horizontal composition of 2-cells in $\HAo$ is determined and it is compatible with the vertical composition.

By virtue of the same equations (\ref{eq}) it follows that the horizontal composition is also associative, and the identities in this case are,
for each object $X$, the 2-cell $[I_{id_X}]$, which we write $[I_X]$ to simplify the notation, $\widehat{FI_X} = id_X$.

\subsection{The fundamental property of \texorpdfstring{$\HAo$}{}} 
\label{fundamental}
 
The 2-category $\HAo$ thus obtained comes equipped with a 
\mbox{2-functor} \mbox{$i$: \mbox{$\sr{A} \mr{} \HAo $}} given by the inclusion, and although in general it will not send weak equivalences into equivalences, it has the following \mbox{\emph{partial universal property}.}

\begin{proposition} \label{pu_i}
    Let $i: \sr{A} \longrightarrow \HAo $ be the inclusion, $\mathscr{D}$ be a 2-category, and $F: \sr{A} \longrightarrow \mathscr{D}$ be a 2-functor that sends the elements of $\Sigma$ into equivalences. Then, there exists a unique 2-functor $\widetilde{F}: \HAo \longrightarrow \mathscr{D}$ such that $\widetilde{F}X=FX$ and $\widetilde{F}f=Ff$:
$$\xymatrix{
        \ar @{} [drr] |(.45){}
        \sr{A} \hspace{1mm} \ar@{^{(}->}[rr]^{i} \ar[dr]_{F} && \HAo \ar@{-->}[dl]^(.5){\exists!\widetilde{F}} \\
        & \mathscr{D} & \\
    }
$$
\end{proposition}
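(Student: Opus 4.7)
The plan is to define $\widetilde{F}$ on objects and arrows as forced, namely $\widetilde{F}X=FX$ and $\widetilde{F}f=Ff$, and to define it on a 2-cell by
$$\widetilde{F}[H_n,\ldots,H_1] \;=\; \widehat{FH_n}\circ\cdots\circ\widehat{FH_1}.$$
Well-definedness on equivalence classes is immediate from the way the ad-hoc relation on sequences was set up in Definition \ref{rel_adhoc} (and its extension to sequences just above): two sequences are equivalent precisely when these composites agree for every such $F$. Given this, I would then verify in order that $\widetilde{F}$ preserves vertical composition, vertical identities, whiskering on the left and on the right, and hence (by the criterion in Remark \ref{horizontal_comp}) horizontal composition and horizontal identities.

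For vertical composition, the definition $[H_n,\ldots,H_1]\circ[K_m,\ldots,K_1]=[H_n,\ldots,H_1,K_m,\ldots,K_1]$ makes preservation tautological. For the vertical identity $[I_f]$, I would use one of the degenerate cylinders exhibited in \ref{hpy_identity}, e.g.\ the one with $W=Z=X$ and all structure arrows identities; then $Fs=id_{FX}$ forces the unique 2-cell $\widehat{FC}$ satisfying $Fs\,\widehat{FC}=Fx$ to be the identity, so $\widehat{FI_f}=Ff\cdot id=Ff$, as required. For whiskering, the key equations \eqref{eq} (already proven in the horizontal composition subsection) give
$$\widetilde{F}([H]\,l)=\widehat{F(Hl)}=\widehat{FH}\,Fl=\widetilde{F}[H]\cdot\widetilde{F}l,\qquad \widetilde{F}(r\,[H])=\widehat{F(rH)}=Fr\,\widehat{FH}=\widetilde{F}r\cdot\widetilde{F}[H],$$
and then the general horizontal composition in $\HAo$ was defined precisely via these whiskerings, so $\widetilde{F}$ preserves it automatically.

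For uniqueness, the point is that single-homotopy classes generate the 2-cells of $\HAo$, so it suffices to pin down $\widetilde{F}[H]$ for a homotopy $H=(C,h)$ with cylinder $C=(W,Z,d_0,d_1,s,x)$. Viewing $C$ itself as a homotopy $d_0\Rightarrow d_1$ via $(C,id_W)$, a short check in $\HAo$ shows $[H]=h\cdot[(C,id_W)]$ and $s\cdot[(C,id_W)]=[I_x]$ (both via the ad-hoc relation, using that for any $F$ in the sense of Definition \ref{rel_adhoc}, $Fs\,\widehat{FC}=Fx$). Applying any 2-functor $\widetilde{F}$ extending $F$ to the last equation gives $Fs\cdot\widetilde{F}[(C,id_W)]=Fx$ in $\sr{D}$; since $Fs$ is an equivalence, the induced functor $(Fs)_\ast$ on hom-categories is fully faithful (\ref{ffes=equi}, \ref{s*_equivalence}), which uniquely determines $\widetilde{F}[(C,id_W)]=\widehat{FC}$, and hence $\widetilde{F}[H]=Fh\cdot\widehat{FC}=\widehat{FH}$. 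Extending by vertical composition settles the value on arbitrary sequences.

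The main subtlety I expect to have to write out carefully is the horizontal composition axiom, specifically checking the interchange identity (axiom (3) of Remark \ref{horizontal_comp}) for single-homotopy classes — but this reduces, by \eqref{eq}, to the interchange identity for the corresponding 2-cells $\widehat{FH}$ in $\sr{D}$, which holds because $\sr{D}$ is a 2-category. Everything else is either tautological from the juxtaposition definition of vertical composition or follows from the uniqueness of $\widehat{FC}$ in $\sr{D}$.
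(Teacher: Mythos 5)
Your proposal is correct and follows essentially the same route as the paper: the same formula $\widetilde{F}[H_n,\ldots,H_1]=\widehat{FH_n}\circ\cdots\circ\widehat{FH_1}$, well-definedness built into the ad-hoc relation, functoriality via the whiskering equations \eqref{eq}, and uniqueness by factoring $H=h\,H_0$ with $H_0=(C,id_W)$ and pinning down $R[H_0]=\widehat{FC}$ from $Fs\cdot R[H_0]=Fx$ together with the uniqueness of $\widehat{FC}$ coming from $Fs$ being an equivalence. Your write-up is somewhat more explicit than the paper's about checking the identity 2-cells and the interchange axiom, but there is no substantive difference.
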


\begin{proof}
    We define $\widetilde{F}$ in the 2-cells as follows:
\begin{equation} \label{functor_hpy}
\widetilde{F}([H_n, ..., H_1]) = \widehat{FH_n} \circ ... \circ \widehat{FH_1},
\hspace{1ex} \text{in particular} \hspace{1.5ex}
\widetilde{F}([H])= \widehat{FH}.
\end{equation}
In this way, by the very definition of the equivalence relation $\widetilde{F}$ is well defined and is functorial for the vertical composition.

\vspace{1ex}
    
    Since $\widetilde{F}l\widetilde{F}([H])=Fl\widehat{FH}=\widehat{FlFH}=\widehat{F(lH)}= \widetilde{F}([lH] )$ and, analogously, $\widetilde{F}([H])\widetilde{F}r=\widetilde{F}([Hr])$, from the definition in \ref{horiz_composition_def} it follows that $\widetilde{F}$ is functorial with respect to horizontal composition.
    
\vspace{1ex}
    
    We now want to see the uniqueness of $\widetilde{F}$.
    
    Let $R:\HAo \longrightarrow \mathscr{D}$ be a 2-functor such that $Ri=F$.
    If $H=(C, h)$ is a homotopy on $\sr{A}$ with cylinder $C=(W, Z, d_0, d_1, s, x)$, we write $H=hH_0$ where $H_0 =(C, id_W)$.
    \begin{center}
       $\xymatrix{
        X
        \ar@<3pt>[rr]^{d_0}
        \ar@<-3pt>[rr]_{d_1}
        \ar[dr]_{x} && W
        \ar[r]^{id_W} \ar[dl]^{s}|\circ & W \ar[r]^{h} & Y\\
        & Z\\ }$
    \end{center}
    
    Since $R([H])=RhR([H_0])=FhR([H_0])$ and $\widetilde{F}(H)=\widehat{FH}=Fh\widehat{FC}$, to check that $R$ equals $\widetilde{F}$ in $[H]$, it suffices to see that $R([H_0])=\widehat{FC}$.
    
    We know that $\widehat{FC}: Fd_0 \Longrightarrow Fd_1$ is unique such that $Fs\widehat{FC}=Fx$.
    On the other hand 
    $FsR([H_0])=RsR([H_0])=R(s[H_0])=R([sH_0])=RI_x=Rx=Fx$ (since $[sH_0]=I_x$). It follows $R([H_0])=\widehat{FC}$.
\end{proof}

We will denote $Hom_{p}(\sr{A},\mathscr{D})_+$ and $Hom_{s}(\sr{A},\mathscr{D})_+$ to the subcategories of $Hom_p (\sr{A}, \mathscr{D})$ and $Hom_s(\sr{A}, \mathscr{D})$, respectively, whose objects are the functors $F$ such that $F(\Sigma) \subseteq Equiv(\mathscr{D})$.

The last proposition tells us that the precomposition with $i$
\begin{equation} \label{pre_hom_iso}
    i^*: Hom_{s}(\HAo, \mathscr{D})_+\longrightarrow Hom_{s}(\sr{A},\mathscr{D})_+
\end{equation}
determines a bijection between the objects, in particular, it is essentially surjective. Let us see that it is also fully faithful.

\begin{lemma} \label{key}
    Let $\mathscr{D}$ be a 2-category, $F, G \in Hom(\sr{A},\mathscr{D})_+$ and a cylinder $C=(W, Z, d_0, d_1 , s, x)$ for an object $X \in \sr{A}$. If $\theta: F \Longrightarrow G$ is a natural transformation, then $\theta_W\widehat{FC}=\widehat{GC}\theta_X$ holds.
\end{lemma}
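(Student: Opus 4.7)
The plan is to use the uniqueness clause in Definition \ref{FH} together with the fact that $Gs$ is an equivalence in $\sr{D}$ (because $s\in\Sigma$ and $G\in Hom(\sr{A},\sr{D})_+$), and then conclude by whiskering on the left and cancelling.

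First I would verify that the two sides of the claimed equality really live in the same hom-category as 2-cells between the same pair of arrows. By 2-naturality of $\theta$ applied to $d_0$ and $d_1$ one has $\theta_W\,Fd_i = Gd_i\,\theta_X$ for $i=0,1$, so both $\theta_W\widehat{FC}$ and $\widehat{GC}\theta_X$ are 2-cells from $Gd_0\theta_X$ to $Gd_1\theta_X$ in $\sr{D}[FX,GW]$. Without this check the cancellation step below could not even be formulated.

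Next I would whisker both sides on the left by $Gs$. Using 2-naturality of $\theta$ at $s$ and at $x$, together with the defining identity $Fs\widehat{FC}=Fx$ from Definition \ref{FH},
$$Gs(\theta_W\widehat{FC}) \;=\; (Gs\,\theta_W)\widehat{FC} \;=\; (\theta_Z\,Fs)\widehat{FC} \;=\; \theta_Z(Fs\widehat{FC}) \;=\; \theta_Z\,Fx \;=\; Gx\,\theta_X.$$
For the right-hand side the defining identity $Gs\widehat{GC}=Gx$ directly gives
$$Gs(\widehat{GC}\theta_X) \;=\; (Gs\widehat{GC})\theta_X \;=\; Gx\,\theta_X.$$

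Finally, since $Gs$ is an equivalence in $\sr{D}$, Proposition \ref{s*_equivalence} says that post-whiskering by $Gs$ is an equivalence of categories $\sr{D}[FX,GW]\to\sr{D}[FX,GZ]$; in particular it is faithful, hence injective on 2-cells, so from the two equalities above the desired identity $\theta_W\widehat{FC}=\widehat{GC}\theta_X$ follows by cancellation. The argument is essentially bookkeeping; the only subtle point is to keep track that the cancellation takes place in the correct hom-category, which is why the preliminary type-check via 2-naturality is essential.
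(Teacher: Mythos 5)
Your proof is correct and follows essentially the same route as the paper: both arguments whisker the two candidate 2-cells by $Gs$, reduce to the equality $Gx\,\theta_X$ via the naturality squares for $s$ and $x$, and then cancel $Gs$ using that it is an equivalence (the paper phrases this cancellation as the uniqueness of the 2-cell lifting $Gx\,\theta_X$, which is the same faithfulness of $(Gs)_*$ you invoke). Your explicit preliminary check that both sides are 2-cells $Gd_0\theta_X \Rightarrow Gd_1\theta_X$ is a worthwhile addition that the paper leaves implicit.
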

$$
\xymatrix
   {
    FX \ar[rr]_{\theta_X}
       \ar@<-10pt>[dd]_{Fd_0}
       \ar@{}[dd]|{\overset{\widehat{FC}}{\Rightarrow}}
       \ar@<10pt>[dd]^{Fd_1}
 && GX \ar@<-10pt>[dd]_{Gd_0}
       \ar@{}[dd]|{\overset{\widehat{GC}}{\Rightarrow}}
       \ar@<10pt>[dd]^{Gd_1}
 \\\\
    FW \ar[rr]_{\theta_W}
 && GW }
$$

\begin{proof}
    Since $\widehat{GC}:Gd_0 \Longrightarrow Gd_1$ is the \'only 2-cell that satisfies $Gs\widehat{GC}=Gx$, then $\widehat{GC}\theta_X$ is the only one such that $ Gs(\widehat{GC}\theta_X)=Gx\theta_X$. Then, it is enough to see that $Gs(\theta_W\widehat{FC})=Gx\theta_X$.
    
    From the naturality of $\theta$ we obtain $Gs\theta_W=\theta_ZFs$ and $\theta_ZFx=Gx\theta_X$.
Therefore,
        $Gs(\theta_W\widehat{FC})=(Gs\theta_W)\widehat{FC}=(\theta_ZFs)\widehat{FC}=\theta_ZFx=Gx\theta_X$.
\end{proof}

\begin{proposition} \label{prop_key}
    If $F, G: \sr{A}\longrightarrow\mathscr{D}$ send the arrows of $\Sigma$ into equivalences and $\theta: F \Longrightarrow G$ is a natural transformation, then there exists a unique 2-natural transformation  $\widetilde{\theta}:\widetilde{F}\Longrightarrow\widetilde{G}$ such that $\widetilde{\theta}i=\theta$.
\end{proposition}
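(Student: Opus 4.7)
The plan is to define $\widetilde{\theta}$ componentwise by $\widetilde{\theta}_X := \theta_X$ for every object $X$ of $\sr{A} = \HAo_0$, and then verify that this assignment is 2-natural with respect to the arrows and 2-cells of $\HAo$. Uniqueness is immediate: a 2-natural transformation is determined by its components, and the requirement $\widetilde{\theta}\,i = \theta$ forces $\widetilde{\theta}_X = \theta_X$.

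For existence, the 1-naturality square is automatic, because arrows of $\HAo$ are the same as arrows of $\sr{A}$ (the 2-functor $i$ is the inclusion), so naturality of $\theta$ with respect to any arrow $f:X\to Y$ of $\sr{A}$ already gives $\widetilde{G}f \cdot \widetilde{\theta}_X = \widetilde{\theta}_Y \cdot \widetilde{F}f$. The substantive content is the 2-naturality axiom: for every 2-cell $\alpha$ of $\HAo$ between $f,g:X\to Y$, we need
$$
\widetilde{G}\alpha \cdot \theta_X \;=\; \theta_Y \cdot \widetilde{F}\alpha
$$
as 2-cells $Gf\,\theta_X \Rightarrow \theta_Y\,Fg$ in $\sr{D}$. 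Since the 2-cells of $\HAo$ are classes of sequences of homotopies and both $\widetilde{F}, \widetilde{G}$ are 2-functors (hence preserve vertical composition), it suffices to check this when $\alpha = [H]$ for a single homotopy $H=(C,h)$ with cylinder $C=(W,Z,d_0,d_1,s,x)$.

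In that case, by the definition of $\widetilde{F}$ in \eqref{functor_hpy}, $\widetilde{F}[H] = Fh \cdot \widehat{FC}$ and $\widetilde{G}[H] = Gh \cdot \widehat{GC}$. Using 1-naturality of $\theta$ on the arrow $h:W\to Y$, we get $\theta_Y \cdot Fh = Gh \cdot \theta_W$, and then using Lemma~\ref{key} on the cylinder $C$, we get $\theta_W \cdot \widehat{FC} = \widehat{GC} \cdot \theta_X$. Chaining these two identities yields
$$
\theta_Y \cdot \widetilde{F}[H] \;=\; \theta_Y \cdot Fh \cdot \widehat{FC} \;=\; Gh \cdot \theta_W \cdot \widehat{FC} \;=\; Gh \cdot \widehat{GC} \cdot \theta_X \;=\; \widetilde{G}[H] \cdot \theta_X,
$$
which is the required equality. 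I expect the only conceptual point to watch is the reduction from sequences of homotopies to single homotopies, but this is routine given that both $\widetilde{F}$ and $\widetilde{G}$ preserve vertical composition; the genuine content is entirely contained in Lemma~\ref{key} combined with ordinary naturality of $\theta$.
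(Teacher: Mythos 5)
Your proposal is correct and follows essentially the same route as the paper's own proof: define $\widetilde{\theta}_X=\theta_X$, reduce to a single homotopy $H=(C,h)$, and chain the ordinary naturality of $\theta$ at $h$ with Lemma~\ref{key} applied to the cylinder $C$. The only difference is that you make explicit the reduction from sequences of homotopies to single ones and the triviality of the 1-naturality square, both of which the paper leaves implicit.
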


\begin{proof}
    For each $X$ in $\sr{A}$, we know that $\widetilde{F}i=F$ and $\widetilde{G}i=G$. We define $\widetilde{\theta}_X :\widetilde{F}X \longrightarrow \widetilde{G}X$ as $\widetilde{\theta}_X=\theta_X$, and let us see that $\widetilde{\theta }$ satisfies the 2-naturalness conditions.
    
    Let $f, g: X \longrightarrow Y$ and $[H]: f \Longrightarrow g$ be a 2-cell in $\HAo$. We want to prove the equality $\widetilde{\theta}_Y\widetilde{F}[H]=\widetilde{G}[H]\widetilde{\theta}_X$. If $H=(C, h)$, from the naturalness of $\theta$ we have $\theta_YFh=Gh\theta_X$, so $\theta_Y\widehat{FH}=\theta_YFh\widehat{FC}= Gh\theta_W\widehat{FC}$. Also, $\widehat{GH}\theta_X=Gh\widehat{GC}\theta_X$. From the previous lemma we get $\theta_W\widehat{FC}=\widehat{GC}\theta_X$ and then $\theta_Y\widehat{FH}=\widehat{GH}\theta_X$, which by definition of $\widetilde {F}, \widetilde{G}$ and $\widetilde{\theta}$, is exactly what we wanted to see.
\end{proof}

As a consequence of the previous results, we obtain that the precomposition with $i$, diagram \ref{pre_hom_iso}, induces, for every 2-category $\mathscr{D}$, an equivalence of categories, which is in fact an isomorphism.

Regarding the 2-categorical aspect, we have the following

\begin{lemma} \label{categorical_2aspect}
     Let $F, G  \in Hom_+(\sr{A}, \mathscr{D})$. If $\eta$, $\theta$: $F \Longrightarrow G$ are natural transformations and $\mu: \eta \longrightarrow \theta$ a modification, then there is a unique $\Tilde{\mu}: \Tilde{\eta} \longrightarrow \Tilde{\theta}$ such that $\Tilde{\mu}i=\mu$.
\end{lemma}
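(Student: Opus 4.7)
The plan is to define $\widetilde{\mu}_X := \mu_X$ for every object $X$, which is legal because $\HAo$ and $\sr{A}$ share the same class of objects, and because Proposition~\ref{prop_key} builds $\widetilde{\eta},\widetilde{\theta}$ via the assignments $\widetilde{\eta}_X=\eta_X$, $\widetilde{\theta}_X=\theta_X$, so $\mu_X:\eta_X \Rightarrow \theta_X$ is already a 2-cell of the correct shape $\widetilde{\eta}_X \Rightarrow \widetilde{\theta}_X$ in $\sr{D}$. The compatibility $\widetilde{\mu}\,i = \mu$ then holds tautologically, and this same observation pins down $\widetilde{\mu}$ on every object of $\HAo$, giving uniqueness at no further cost.

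The remaining task is to verify that this pointwise family satisfies the modification axiom between the 2-natural transformations $\widetilde{\eta},\widetilde{\theta}:\widetilde{F}\Rightarrow \widetilde{G}$. Since both are 2-natural, the axiom reduces (as noted at the end of Definition~\ref{def_pseudo}) to the equality $\widetilde{\mu}_Y\,\widetilde{F}f = \widetilde{G}f\,\widetilde{\mu}_X$ for each arrow $f:X\to Y$ of $\HAo$. But by construction, $\HAo$ adds 2-cells and leaves the underlying collection of arrows untouched, so any such $f$ is an arrow of $\sr{A}$; using $\widetilde{F}f = Ff$, $\widetilde{G}f = Gf$, and $\widetilde{\mu} = \mu$ on objects of $\sr{A}$, the required equation specialises to $\mu_Y\,Ff = Gf\,\mu_X$ — exactly the modification axiom already satisfied by $\mu$.

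There is no serious obstacle. The one point worth being explicit about is that the definition of modification only imposes a condition on arrows of the source and not on its 2-cells, so nothing additional needs to be verified against the homotopy classes $[H]$ that populate $\HAo$. All of the genuine semantic work has already been absorbed into Proposition~\ref{prop_key} at the level of arrows, and the present lemma is its straightforward 2-cell counterpart, obtained by unwinding the definitions.
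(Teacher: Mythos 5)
Your proof is correct, but it takes a genuinely shorter route than the paper's. You observe that the modification axiom of Definition \ref{def_pseudo} is indexed only by the \emph{arrows} of the source 2-category; since $\HAo$ has exactly the same objects and arrows as $\sr{A}$, and $\widetilde{F},\widetilde{G},\widetilde{\eta},\widetilde{\theta}$ restrict along $i$ to $F,G,\eta,\theta$, the axiom for $\widetilde{\mu}$ is word-for-word the axiom for $\mu$, and uniqueness is forced because the objects coincide. The paper instead devotes its proof to verifying the compatibility of $\widetilde{\mu}$ with the 2-cells $[H]$ of $\HAo$, namely $\mu_Y\widehat{FH}=\widehat{GH}\mu_X$, by expanding $\mu_Y\widehat{FH}=\mu_YFg\circ\eta_Y\widehat{FH}$ and invoking Lemma \ref{key}. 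That identity is not literally part of the stated definition of modification, and it is in fact a formal consequence of the 1-cell modification axiom for $\mu$, the 2-naturality of $\widetilde{\eta}$ with respect to $[H]$ already established in Proposition \ref{prop_key}, and the interchange law in $\mathscr{D}$:
$\mu_Y\widehat{FH}=(\mu_YFg)\circ(\eta_Y\widehat{FH})=(Gg\,\mu_X)\circ(\widehat{GH}\eta_X)=\widehat{GH}\mu_X$.
So your argument is sound and more economical, delegating the semantic content to Proposition \ref{prop_key}; the paper's computation buys an explicit, self-contained check of the 2-cell coherence, which is worth including (in the one-line interchange form above) if a reader might expect that compatibility to be verified rather than deduced from the shape of the definition.
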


\begin{proof}
    We define $\Tilde{\mu}: \Tilde{\eta} \longrightarrow \Tilde{\theta}$ as $\Tilde{\mu}_X=\mu_X$ for every $X$.
    Let \mbox{$f, g: X \longrightarrow Y$}, and let $H$ be a homotopy from $f$ to $g$. We want to see that $\Tilde{\mu}_Y\Tilde{F}[H]=\Tilde{G}[H]\Tilde{\mu}_X$; that is, $\mu_Y\widehat{FH}=\widehat{GH}\mu_X$. Since $\mu$ is a modification, $\theta_YFg=Gg\mu_X$, and by the lemma \ref{key} we also have $\eta_W\widehat{FC}=\widehat{GC}\eta_X$, then :

\vspace{1ex}

$\mu_Y\widehat{FH} =\mu_YFg \circ \eta_Y \widehat{FH}
                          =Gg\mu_X\circ\eta_YFh\widehat{FC}
                          =Gg\mu_X \circ Gh\eta_W\widehat{FC}
                          = Gg\mu_X \circ Gh\widehat{GC}\eta_X
                          =Gg \mu_X \circ \widehat{GH}\eta_X
                          =\widehat{GH}\mu_X.
$
\end{proof}

The arguments in the proofs of \ref{pu_i}, \ref{key}, \ref{prop_key}, and \ref{categorical_2aspect}  follow mutatis mutandis in terms of
2-functors and pseudonatural transformations. We then have the main result of this section:
\begin{theorem} \label{hom_iso}
The inclusion 2-functor $i: \sr{A} \mr{} \HAo$ induces  2-category isomorphisms
\mbox{$i^*: Hom_{s}(\HAo, \mathscr{D})_+\longrightarrow Hom_{s}(\mathscr{A}, \mathscr{D})_+$,} and
\mbox{$i^*: Hom_{p}(\HAo, \mathscr{D})_+\longrightarrow Hom_{p}(\mathscr{A}, \mathscr{D})_+$.}\cqd
\end{theorem}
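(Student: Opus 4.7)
The plan is to stitch together Propositions \ref{pu_i}, \ref{prop_key} and Lemma \ref{categorical_2aspect} to obtain the statement in the strict case $Hom_s$ immediately, and then to adapt each of these arguments to pseudonatural transformations for the $Hom_p$ case.

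In the strict case, each of the three preceding results provides a \emph{unique} lift one categorical dimension up: Proposition \ref{pu_i} extends any $F \in Hom_s(\sr{A},\sr{D})_+$ uniquely to a 2-functor $\widetilde{F}$ on $\HAo$; Proposition \ref{prop_key} extends any 2-natural $\theta : F \Rightarrow G$ uniquely to $\widetilde{\theta} : \widetilde{F} \Rightarrow \widetilde{G}$; and Lemma \ref{categorical_2aspect} extends any modification $\mu$ uniquely to $\widetilde{\mu}$. Combining uniqueness with the tautological equalities $\widetilde{F} \circ i = F$, $\widetilde{\theta} i = \theta$, $\widetilde{\mu} i = \mu$ makes the assignments $F \mapsto \widetilde{F}$, $\theta \mapsto \widetilde{\theta}$, $\mu \mapsto \widetilde{\mu}$ into a strict two-sided inverse of $i^*$ at every dimension. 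Hence $i^*$ is an isomorphism of 2-categories in the strict case, on the nose (not merely up to equivalence).

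For $Hom_p$, since the 0- and 1-cells of $\HAo$ coincide with those of $\sr{A}$, a pseudonatural $\theta : F \Rightarrow G$ between extended 2-functors has an obvious candidate extension $\widetilde{\theta}$ with the same object components $\theta_X$ and the same coherence cells $\widetilde{\theta}_f := \theta_f$ for $f$ an arrow of $\sr{A}$. The unit and cocycle axioms for $\widetilde{\theta}$ are then immediate since they are already in force for $\theta$. The real content is to verify the pseudonaturality axiom at a 2-cell $[H]:f \Rightarrow g$ of $\HAo$. By the definition of vertical composition in $\HAo$ via juxtaposition of sequences, it suffices to check this for a single homotopy $H=(C,h)$. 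The key step is the pseudonatural analogue of Lemma \ref{key}: using that $Gs$ is an equivalence, one pastes the pseudonatural squares $\theta_s$, $\theta_x$, $\theta_{d_0}$, $\theta_{d_1}$ with $\widehat{FC}$ and invokes the uniqueness characterization of $\widehat{GC}$ as the only 2-cell satisfying $Gs \cdot \widehat{GC} = Gx$ to identify the two pastings. Composing with $\theta_h$ then delivers pseudonaturality of $\widetilde{\theta}$ at $[H]$. The main obstacle is bookkeeping these coherence cells carefully; once this is done, the modification argument of Lemma \ref{categorical_2aspect} carries over verbatim (modifications have no additional coherence data beyond their components), and again the uniqueness of all three extensions supplies the inverse to $i^*$, yielding the 2-category isomorphism in the pseudonatural setting as well.
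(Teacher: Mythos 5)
Your proposal is correct and follows essentially the same route as the paper, which obtains the strict case by combining Propositions \ref{pu_i}, \ref{key}, \ref{prop_key} and Lemma \ref{categorical_2aspect} (uniqueness of all three lifts yielding a strict two-sided inverse of $i^*$) and then asserts that these arguments carry over \emph{mutatis mutandis} to pseudonatural transformations. Your identification of the pseudonatural analogue of Lemma \ref{key} --- pasting the coherence cells $\theta_{d_0}$, $\theta_{d_1}$, $\theta_s$, $\theta_x$ and invoking the uniqueness characterization of $\widehat{GC}$ via the equivalence $Gs$ --- is precisely the content the paper leaves implicit in that phrase.
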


\subsection{A condition to have the 2-localization}

We see that from theorem \ref{hom_iso}
it follows that as soon as the morphisms of $\Sigma$ are equivalences in $\HAo$, the 2-functor
$\sr{A} \mr{i} \HAo$ will be the \mbox{2-localization} of $\sr{A}$ at $\Sigma$.
We now turn to consider a condition that assures us that this will be the case. 

\begin{lemma} \label{2_cells_inv}
Every 2-cell in $\HAo$ is invertible.
\end{lemma}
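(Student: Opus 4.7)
The plan is to exhibit an explicit inverse for every 2-cell by reversing cylinders. Given a homotopy $H = (C, h)$ from $f$ to $g$ with cylinder $C = (W, Z, d_0, d_1, s, x)$, observe that the tuple $C^{-1} := (W, Z, d_1, d_0, s, x)$ is again a cylinder (the equalities $sd_1 = sd_0 = x$ still hold), and $H^{-1} := (C^{-1}, h)$ is then a homotopy from $g$ to $f$. For a general sequence I would set $[H_n,\dots,H_1]^{-1} := [H_1^{-1},\dots,H_n^{-1}]$, so it suffices to prove $[H^{-1}] \circ [H] = [I_f]$ and $[H] \circ [H^{-1}] = [I_g]$ for single homotopies.

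By the characterization of vertical identities in \ref{hpy_identity} (extended to sequences via the very definition of the equivalence relation), both equalities reduce to the assertion that for every 2-functor $F \in Hom(\sr{A}, \sr{D})_+$ one has $\widehat{FH^{-1}} \circ \widehat{FH} = Ff$ and $\widehat{FH} \circ \widehat{FH^{-1}} = Fg$. The main computation, which I would carry out first, is to show that $\widehat{FC^{-1}} = \widehat{FC}^{-1}$ in $\sr{D}$, i.e.\ that these two 2-cells are mutually inverse. Applying $F$ to both $C$ and $C^{-1}$ yields the defining equations $Fs\,\widehat{FC} = Fx$ and $Fs\,\widehat{FC^{-1}} = Fx$ (both viewed as identity 2-cells on $Fx$). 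Hence
$$Fs\,(\widehat{FC^{-1}} \circ \widehat{FC}) = (Fs\,\widehat{FC^{-1}}) \circ (Fs\,\widehat{FC}) = Fx \circ Fx = Fx,$$
and likewise the identity 2-cell $Fd_0$ on $Fd_0$ trivially satisfies $Fs\,Fd_0 = Fx$. Since $Fs$ is an equivalence, Proposition \ref{s*_equivalence} together with Proposition \ref{ffes=equi} makes the whiskering $(Fs)_*$ fully faithful on hom-categories, so the two 2-cells $\widehat{FC^{-1}} \circ \widehat{FC}$ and $Fd_0$ coincide. Symmetrically $\widehat{FC} \circ \widehat{FC^{-1}} = Fd_1$.

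From this the conclusion is immediate: by Definition \ref{FH},
$$\widehat{FH^{-1}} \circ \widehat{FH} = (Fh\,\widehat{FC^{-1}}) \circ (Fh\,\widehat{FC}) = Fh\,(\widehat{FC^{-1}} \circ \widehat{FC}) = Fh\,Fd_0 = Ff,$$
and analogously $\widehat{FH} \circ \widehat{FH^{-1}} = Fg$. This gives $[H^{-1}] \circ [H] = [I_f]$ and $[H] \circ [H^{-1}] = [I_g]$ in $\HAo$, so every single-homotopy class is invertible. The extension to sequences $[H_n,\dots,H_1]$ is then formal: the reverse sequence is inverse because $\widehat{F}$-values of sequences compose in $\sr{D}$, and invertibility propagates through vertical composition.

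There is no serious obstacle here; the only delicate point is to invoke the uniqueness clause of Definition \ref{FH} (equivalently, fully faithfulness of $(Fs)_*$) in the correct form to identify $\widehat{FC^{-1}} \circ \widehat{FC}$ with the identity 2-cell on $Fd_0$. Everything else is a direct translation through the functors $\widetilde{F}$ of Proposition \ref{pu_i}.
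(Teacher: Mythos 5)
Your proof is correct and follows essentially the same route as the paper: the same reversed cylinder $C^{-1}$, the same computation $Fs(\widehat{FC^{-1}}\circ\widehat{FC})=Fx\circ Fx=Fx$ combined with the uniqueness clause to identify $\widehat{FC^{-1}}\circ\widehat{FC}$ with $Fd_0$, and the same conclusion $\widehat{FH^{-1}}\circ\widehat{FH}=Ff$. Your explicit remark that invertibility of single-homotopy classes propagates to sequences via vertical composition is a small addition the paper leaves implicit, but it is not a different argument.
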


\begin{proof}
    Let $H=(C, h)$ be a homotopy from $f$ to $g$ with cylinder 
    \mbox{$C=(W, Z, d_0, d_1, s, x)$.} We define $H^{-1}=(C^{-1}, h)$, where $C^{-1}$ is the cylinder obtained from $C$ by exchanging $d_0$ and $d_1$, so that $H^{-1}$ is a homotopy from $g$ to $f$. Let's see that $[H^{-1}]\circ[H]=[I_f]$.
    
    Let $F: \sr{A}\longrightarrow\mathscr{D}$ be a 2-functor. We have \mbox{$\widehat{FC}: Fd_0 \Longrightarrow Fd_1$} and 
    \mbox{$\widehat{FC^{-1}}: Fd_1 \Longrightarrow Fd_0$} satisfying this equations \mbox{$Fs\widehat{FC}=Fx$} and \mbox{$Fs\widehat{ FC^{-1}}=Fx$.} Then  
\begin{center}$Fs(\widehat{FC^{-1}}\circ\widehat{FC}) = Fs\widehat{FC^{-1}}\circ
   Fs\widehat{FC}) =  Fx\circ Fx=Fx$.
\end{center}

    Since there is only one 2-cell from $Fd_0$ to $Fd_0$ satisfying this equality, it follows  $\widehat{FC^{-1}}\circ\widehat{FC}=id_{Fd_0}=Fd_0$. Thus,
    \begin{center}
    $\widehat{FH^{-1}}\circ \widehat{FH}=Fh\widehat{FC^{-1}} \circ Fh\widehat{FC}= Fh(\widehat{FC^{-1} } \circ \widehat{FC})=FhFd_0=Ff$;
    \end{center}
    that is, $[H^{-1}, H]=[I_f]$. A similar account shows that $[H, H^{-1}]=[I_g]$, therefore $[H]$ is invertible and its inverse is $[H]^{-1}=[H^{ -1}]$.
    \end{proof}

\begin{definition}
    A map $f:X \longrightarrow Y$ is a \emph{section} if it admits a left inverse; that is, there exists $g: Y\longrightarrow X$ that satisfies $gf=id_X$.
Dually, we say that $f$ is a \emph{retraction} if it has a right inverse. We say that a map is \emph{split} if it is a section or a retraction.
\end{definition}

\begin{definition} \label{2x1}
We say that a class $\Sigma$ of arrows satisfies the property \mbox{"2 for 1" \,} if given
$X \mr{f} Y \mr{g} X$ such that $g\,f = id_X$, then
\mbox{$f \in \Sigma$ $\iff$ $g \in \Sigma$.}
\end{definition}
We note that the well-known "3 for 2" property (axiom M5 in Definition \ref{model_cat}) implies the "2 for 1" property.
\begin{proposition} \label{split==>loc}
If the class $\Sigma$ satisfies the "2 for 1"\, property, then weak equivalences that are sections or retractions are equivalences in
$\HAo$.
\end{proposition}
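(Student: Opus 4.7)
My plan is to reduce the proposition to an explicit construction of a cylinder and a homotopy, invoking Lemma \ref{2_cells_inv} for the invertibility of the resulting 2-cell.

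Suppose $s : X \to Y$ is a weak equivalence that is also a section, with left inverse $r : Y \to X$, so $r\,s = id_X$. By Definition \ref{2x1} applied to the identity $rs = id_X$, the map $r$ belongs to $\Sigma$. The first triangular datum for an equivalence is already available, since $r s = id_X$ gives the identity 2-cell $id_X \Rightarrow r s$ in $\HAo$. The main task is therefore to manufacture an invertible 2-cell $s r \Rightarrow id_Y$.

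To do so I would take the cylinder $C$ on $Y$ given by $W = Y$, $Z = X$, $d_0 = s r$, $d_1 = id_Y$, $x = r$ and equivalence leg $r : Y \to X$ (which lies in $\Sigma$ by the previous paragraph); the required identities $r \cdot s r = r = r \cdot id_Y$ hold because $r s = id_X$. Taking $h = id_Y : W \to Y$ gives a homotopy $H = (C, h)$ from $s r$ to $id_Y$ in the sense of Definition \ref{cylindrical_homotopias}. Hence $[H] : s r \Rightarrow id_Y$ is a 2-cell in $\HAo$, and by Lemma \ref{2_cells_inv} it is automatically invertible. Together with $r s = id_X$ this exhibits $s$ as an equivalence in $\HAo$ with quasi-inverse $r$.

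The retraction case is entirely dual: if $s$ is a weak equivalence and a retraction with $s r = id_Y$, then again $r \in \Sigma$ by Definition \ref{2x1}, the identity $s r = id_Y$ supplies one triangular 2-cell, and one constructs a homotopy $rs \Rightarrow id_X$ using the cylinder on $X$ with $W = X$, $Z = Y$, $d_0 = r s$, $d_1 = id_X$, equivalence leg $s : X \to Y$ and $x = s$ (using $s \cdot rs = s$ because $sr = id_Y$), together with $h = id_X$; then Lemma \ref{2_cells_inv} closes the argument. The only conceptual step is recognising that Definition \ref{2x1} forces the one-sided inverse to lie in $\Sigma$, after which the cylinders are forced by the equations and no obstruction arises.
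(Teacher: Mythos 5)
Your proof is correct and takes essentially the same route as the paper's: the identical cylinder ($W=Y$, $Z=X$, $d_0=sr$, $d_1=id_Y$, with leg $r$, which lies in $\Sigma$ by the ``2 for 1'' property) together with $h=id_Y$ produces the homotopy $sr \Rightarrow id_Y$, and Lemma \ref{2_cells_inv} supplies invertibility. The retraction case is handled dually in both arguments, so there is nothing to add.
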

\begin{proof}

Let $X \mr{s} Y$ be a weak equivalence that is a section, with $Y \mr{r} X$ a left inverse. To see that it is an equivalence in $\HAo$ we have to show that there is an isomorphism between $id_Y$ and the composition $s\,r$, but since in $\HAo$ every 2-cell is invertible, we only need to prove the existence of such a 2-cell, that is a homotopy $\xymatrix{ s\,r \ar@2{~>}[r] & id_Y}$.

Since $rs\,r=r$, the diagram
$$
\xymatrix
     {
      Y \ar@<3pt>[rr]^{sr}\ar@<-3pt>[rr]_{id_Y}\ar[dr]_{r} 
   && Y\ar[rr]^{id_Y}\ar[dl]^{r}|\circ 
   && Y
   \\
    & X \\
      }
$$
    is commutative and gives us a homotopy from $s\,r$ to $id_Y$. Note that the \mbox{2 for 1} hypothesis on $\Sigma$ tells us that $r$ is a weak equivalence, necessary fact to have a cylinder.
Since the case where $s$ is a retraction is completely analogous, we can conclude the proof.
\end{proof}

As the composition of equivalences is an equivalence, the Theorem \ref{hom_iso} together with the Proposition \ref{split==>loc} give us  (remember definition \ref{2-location}):

\begin{theorem} \label{2-loc-W-split}
 If the class $\Sigma$ satisfies the "2 for 1"\, property and every weak equivalence is a composition of split weak equivalences, the inclusion
 \mbox{$i:\sr{A}\longrightarrow \HAo$} is the 2-localization of $\sr{A}$ at the \mbox{class $\Sigma$.}
Furthermore, the 2-functors \mbox{$i^*: Hom_s(\HAo, \mathscr{D})\longrightarrow Hom_{s}(\sr{A}, \mathscr{D})_+$} and
    \mbox{$i^*: Hom_p(\HAo, \mathscr{D})\longrightarrow Hom_{p}(\sr{A}, \mathscr{D})_+$} are both isomorphisms of
    \mbox{2-categories.}
\end{theorem}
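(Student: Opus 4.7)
The plan is to deduce the theorem by combining Proposition \ref{split==>loc} with Theorem \ref{hom_iso}, using the elementary fact that equivalences in any 2-category are closed under composition and are preserved by 2-functors. These two observations will together force the ``$+$'' subscript on the source side of $i^\ast$ to become vacuous, and then Theorem \ref{hom_iso} delivers the conclusion immediately.

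First I would recall that if $f:X\to Y$ and $g:Y\to Z$ are equivalences in a 2-category, with quasi-inverses $f'$ and $g'$ and invertible 2-cells $id_X \Mr{} f'f$, $ff' \Mr{} id_Y$, $id_Y \Mr{} g'g$, $gg' \Mr{} id_Z$, then $f'g'$ is a quasi-inverse for $gf$ witnessed by whiskered composites of these 2-cells. Invoking the hypothesis together with Proposition \ref{split==>loc}, which under the ``2 for 1'' property exhibits each split weak equivalence as an equivalence in $\HAo$, I conclude that $i(s)$ is an equivalence in $\HAo$ for every $s \in \Sigma$. This verifies condition (1) of Definition \ref{2-location}.

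Next I would observe that since every 2-functor preserves equivalences, for any 2-category $\sr{D}$ and any 2-functor $F \colon \HAo \longrightarrow \sr{D}$ the composite $Fi \colon \sr{A} \longrightarrow \sr{D}$ sends $\Sigma$ into equivalences of $\sr{D}$. Consequently the inclusion
$$Hom_s(\HAo, \sr{D}) \;=\; Hom_s(\HAo, \sr{D})_+ , \qquad Hom_p(\HAo, \sr{D}) \;=\; Hom_p(\HAo, \sr{D})_+$$
is an equality (the analogous identification holds for pseudonatural transformations and modifications, since these are defined on all objects and arrows and the $+$ restriction concerns only the underlying 2-functors). Substituting these identities into Theorem \ref{hom_iso}, which already furnishes the 2-category isomorphisms
$i^\ast : Hom_s(\HAo,\sr{D})_+ \longrightarrow Hom_s(\sr{A},\sr{D})_+$ and $i^\ast : Hom_p(\HAo,\sr{D})_+ \longrightarrow Hom_p(\sr{A},\sr{D})_+$, yields the stated isomorphisms. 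In particular $i^\ast$ is a pseudoequivalence, which is condition (2) of Definition \ref{2-location}, so $i$ is the 2-localization of $\sr{A}$ at $\Sigma$.

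I do not expect any real obstacle here: the argument is a direct assembly of the preceding Theorem \ref{hom_iso} with the two formal facts about equivalences recalled above, and the only conceptual point is recognizing that passing from split to arbitrary weak equivalences is automatic once the class of equivalences is closed under composition.
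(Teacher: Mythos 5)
Your proposal is correct and follows exactly the paper's own (very brief) argument: Proposition \ref{split==>loc} plus closure of equivalences under composition gives condition (1), and then Theorem \ref{hom_iso} together with the observation that 2-functors preserve equivalences (so the ``$+$'' on the source side is vacuous) gives condition (2). Nothing to add.
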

Homotopies (as in  \ref{cylindrical_homotopias})  were defined in \cite[3.2.3, 3.2.4]{S} and  
used to define a congruence between arrows rather than the 2-cells of a 2-category, and a quotient category {Ho}$(\cc{A})$ is obtained.  In view of  
\cite[3.2.6]{S} it follows that the connected components of the hom-categories of $\HAo$ are the arrows of 
{Ho}$(\cc{A})$, so $\HAo$ is a direct 2-dimensional generalization of Ho$(\cc{A})$.

\subsection{The germs of homotopies}

 Fixing two objects and two arrows 
$X \mrdos{f}{g} Y$ we consider for each cylinder $C$ for $X$ the set  $Hpy(X,Y)(f,g)(C)$ of all homotopies $f \Hpy{} g$ with cylinder $C$.
This actually determine a contravariant set valued functor defined on the category of cylinders for $X$, see \ref{cylmap} below. We call \emph{germ relation} the equivalent relation which computes the colimit set of this functor. A key property is that it is finer than the ad-hoc relation 
used to define $\HAo$.
\vspace{1ex}

We start by introducing the concept of \emph{cylinder map}, and we will prove that if two homotopies are connected by a zig-zag of maps between their cylinders, then they determine the same class, a fact that we will use many times to show the equality of 2-cells in $\HAo$.
\begin{definition}[{\bf Cylinder maps}] \label{cylmap}
    Let $C=(W, Z, d_0, d_1, s, x)$, 
 \mbox{$C'=(W', Z', d'_0, d'_1, s', x')$} be two cylinders for $X $ in 
 $\sr{A}$. A \emph{cylinder map} $C\longrightarrow C'$ consists of a pair of maps $\phi: W\longrightarrow W'$ and $\psi: Z \longrightarrow Z'$ that make the following diagram commutative
\begin{center}
    $\xymatrix@R=3ex@C=3.5ex
        {&& W \ar[dd]^(.4){s}|(.4)\circ
           \ar[drr]^{\phi} \\
        X \ar@<3pt>[urr]^{d_0}
          \ar@<-3pt>[urr]_{d_1}
          \ar@/_1pc/@<3pt>[rrrr]^(.7){d'_0}
          \ar@/_1pc/@<-3pt>[rrrr]_(.7){d'_1}
          \ar@{=}[dd] &&&& W'\ar[dd]^(.6){s'}|(.6)\circ \\
        && Z\ar[drr]^{\psi}\\
        X\ar[urr]^{x} \ar@/_1pc/[rrrr]^{x'} &&&& Z'}$
\end{center}
It is straightforward to check that they compose and form a category, and that $Hpy(X,Y)(f,g)(C)$ is a contravariant functor on the variable $C$.
\cqd
\end{definition}

\begin{definition} \label{rel_germ}
Given $H=(C, h)$ and $H'=(C', h')$ two homotopies from $f$ to $g$, we say that $H' \germ > H$  if there exists a morphism of cylinders
$\xymatrix {C \ar[r]^{(\phi, \psi)} & C'}$ such that
$h' \circ \phi = h$. The \emph{germ relation} "$\,\germ\,$" \, between homotopies is the equivalence relation generated by "$\,\germ >\,$".
{So, $H \germ H'$ if they are connected by a zig-zag of cylinder maps}.
\end{definition}

\vspace{-1ex}

\emph{{Note that this definition of equivalence is \emph{syntactic} in the sense that it is a condition intrinsic to the 2-category
$\sr{A}$.}}
\begin{lemma} \label{germ==>adhoc}
    If $H$, $H': f  \Hpy{} g$ are two homotopies on $\sr{A}$ such that \mbox{$H \germ H' $}, then $[H]=[H']$ in $\HAo$.
\end{lemma}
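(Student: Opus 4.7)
The plan is to appeal directly to Definition \ref{rel_adhoc}: I need to show that $\widehat{FH} = \widehat{FH'}$ for every 2-functor $F:\sr{A}\longrightarrow \mathscr{D}$ sending $\Sigma$ to equivalences. Since the germ relation is the equivalence relation generated by $\germ >$, and equality of 2-cells is itself an equivalence relation, it suffices to treat the single generating step: assume there exists a cylinder map $(\phi,\psi):C\longrightarrow C'$ with $h'\phi = h$. The general case then follows automatically by closing under symmetry and transitivity.

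Applying $F$ to the commutativity relations encoded in a cylinder map yields $F\phi\,Fd_i = Fd'_i$ for $i=0,1$, together with $Fs'\,F\phi = F\psi\,Fs$ and $F\psi\,Fx = Fx'$. The key step is to exploit the uniqueness clause characterising $\widehat{FC'}$ in Definition \ref{FH}. I would exhibit the whiskered 2-cell $F\phi\,\widehat{FC}:Fd'_0\Longrightarrow Fd'_1$ as a candidate and verify
\[
Fs'\,(F\phi\,\widehat{FC}) \;=\; (Fs'\,F\phi)\,\widehat{FC} \;=\; (F\psi\,Fs)\,\widehat{FC} \;=\; F\psi\,(Fs\,\widehat{FC}) \;=\; F\psi\,Fx \;=\; Fx',
\]
which by uniqueness forces $\widehat{FC'} = F\phi\,\widehat{FC}$.

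The proof then concludes via
\[
\widehat{FH'} \;=\; Fh'\,\widehat{FC'} \;=\; Fh'\,F\phi\,\widehat{FC} \;=\; F(h'\phi)\,\widehat{FC} \;=\; Fh\,\widehat{FC} \;=\; \widehat{FH},
\]
using the hypothesis $h'\phi = h$ together with the functoriality of $F$. I do not anticipate any serious obstacle: the argument is a direct exploitation of the universal property that defines $\widehat{FC}$. The only bookkeeping point worth noting is that the generating relation $\germ >$ is not symmetric, but since the desired conclusion $[H]=[H']$ is itself symmetric in $H$ and $H'$, closing the one-step argument under the generated equivalence relation is formal and requires no additional work.
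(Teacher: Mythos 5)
Your proposal is correct and follows essentially the same route as the paper's proof: reduce to the generating relation $\germ >$, use the uniqueness clause in Definition \ref{FH} to identify $\widehat{FC'}$ with $F\phi\,\widehat{FC}$, and then compute $\widehat{FH'}=\widehat{FH}$ from $h'\phi=h$. Your write-up is in fact a bit cleaner than the paper's (which contains a couple of typos, writing $F\psi$ for $F\phi$ and $h\phi=h'$ for $h'\phi=h$), but the mathematical content is identical.
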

\begin{proof}
    It suffices to prove the statement for the generators 
    \mbox{$H' \germ > H$.} Let $C=(W, Z, d_0, d_1, s, x)$, \mbox{$C'=(W', Z', d'_0, d'_1, s', x')$}, $H=(C, h)$, $H'=(C', h')$,  be such that $H' \germ > H$, and let
    $\xymatrix{C \ar[r]^{(\phi, \psi)} & C'}$ a cylinder map such that $h'\phi =h$.
 
        If $\mathscr{D}$ is a 2-category and $\sr{A} \mr{F}\mathscr{D}$ is a functor that sends weak equivalences into equivalences, since $F\psi Fs= Fs'F\phi$, then we have \mbox{$Fx=F\psi Fx=F\psi Fs \widehat{FC}=Fs'F\phi\widehat{FC}$, and $F\phi\widehat {FC}: Fd'_0 \Longrightarrow Fd'_1$.} For \mbox{uniqueness} of $\widehat{FC'}$, it results $F\phi\widehat{FC}=\widehat{FC'}$, and then from the equation $h\phi=h'$ we get $\widehat{FH'}=Fh'F\psi \widehat{FC}=Fh\widehat{FC}=\widehat{FH}.$
    \end{proof}

\begin{comentario} \label{germcoment1}
    The germ relation would allow to define another homotopy 
    \mbox{2-category}. We have verified that the vertical and horizontal compositions of classes of homotopies can be defined, and that all the requirements to have a \mbox{2-category} can be proved, except for the compatibility axiom that relates both compositions. We believe that this compatibility would not be valid, see more on this in Comment 
\ref{germcoment2}.
\end{comentario}

%

\section{Homotopy 2-categories of a model category.} \label{Loc_model}

We pass now to apply the results of Section \ref{hacheo} to develop a 2-dimensional version of Quillen's localizations of a model category 
$\sr{C}$, $(\mathcal{F}, \; \mathcal{C}o\!f, \; \mathcal{W})$, notation refers to \ref{MC}. We set $\Sigma = \cc{W}$ and we will consider the cases  $\sr{A} = \sr{C}$ and $\sr{A} = \sr{C}_{fc}$, the full subcategory of fibrant-cofibrant objects.

\subsection{The 2-localization of the category
\texorpdfstring{$\mathscr{C}_{fc}$}{}} \label{locationC_fc}

Using  Theorem \ref{2-loc-W-split} we see that for the 2-category $\Ho$ to be the localization of $\mathscr{C}$ at the class $\mathcal{W }$ it is enough that every weak equivalence be a composition of split weak equivalences. We will be able to prove this if we restrict ourselves to the full subcategory $\mathscr{C}_{fc}$ of fibrant-cofibrant objects, thus obtaining the 2-localization of $\mathscr{C}_{ fc}$ at the weak equivalences. {Note that $\mathscr{C}_{fc}$ is not a model category}. 

We take the following proposition from 
\cite[3.1.19]{S}:

\begin{proposition} \label{we_split_fc}
    Let $s: X \longrightarrow Y$ be a weak equivalence on a model category $\mathscr{C}$. If $X$ is fibrant and $Y$ is cofibrant, $\,s\,$ factors as a composition
\mbox{$X \mr{i} Z \mr{p} Y$ where $\,i\,$} is a section and $\,p\,$ a retraction, both weak equivalences. If $X$ and $Y$ are fibrant-cofibrant, the same happens with $Z$.

\end{proposition}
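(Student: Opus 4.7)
The plan is to apply the factorization axiom M4 to $s$ and then use the lifting axiom M2 twice, exploiting fibrancy of $X$ and cofibrancy of $Y$ respectively.

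First, factor $s$ as $s = p \circ i$ with $X \mrce{i} Z \mrf{p} Y$, where $i$ is a trivial cofibration and $p$ is a fibration. Since $s$ and $i$ are both weak equivalences, by the 3 for 2 property (axiom M5) the map $p$ is also a weak equivalence, hence $p$ is a trivial fibration.

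Next I would show $i$ is a section. Since $X$ is fibrant, $X \longrightarrow 1$ is a fibration, and we have a lifting problem
$$
\xymatrix{
X \ar[r]^{id_X} \ar@{>->}[d]_{i}|\circ & X \ar@{->>}[d] \\
Z \ar[r] \ar@{-->}[ur]^{r} & 1
}
$$
which is solvable because $i$ is a trivial cofibration (see \ref{lifting_sii}). The lift $r$ satisfies $r \circ i = id_X$, so $i$ is a section. Dually, since $Y$ is cofibrant, $0 \longrightarrow Y$ is a cofibration, and the lifting problem
$$
\xymatrix{
0 \ar[r] \ar@{>->}[d] & Z \ar@{->>}[d]^{p}|\circ \\
Y \ar[r]_{id_Y} \ar@{-->}[ur]^{j} & Y
}
$$
has a solution because $p$ is a trivial fibration. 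The lift $j$ satisfies $p \circ j = id_Y$, so $p$ is a retraction. Both $i$ and $p$ are weak equivalences by construction.

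For the final claim, suppose in addition that $X$ and $Y$ are both cofibrant and fibrant. Since cofibrations are closed under composition (axiom M3), the composite $0 \mrc{} X \mrce{i} Z$ is a cofibration, so $Z$ is cofibrant. Dually, $Z \mrfe{p} Y \mrf{} 1$ is a composition of fibrations, so $Z$ is fibrant. No step presents a real obstacle; the argument is a standard application of the model category axioms, and the main thing to keep straight is which axiom (fibrancy of $X$ vs. cofibrancy of $Y$) is used for which lift.
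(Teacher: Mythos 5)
Your proof is correct and follows essentially the same route as the paper's: factor $s$ by M4, upgrade both factors to weak equivalences via M5, then use the lifting axiom against $X \to 1$ (fibrancy of $X$) to split $i$ and against $0 \to Y$ (cofibrancy of $Y$) to split $p$, with $Z$ fibrant-cofibrant by closure of (co)fibrations under composition. The only cosmetic difference is that you fix the factorization with $i$ a trivial cofibration from the start, whereas the paper takes either factorization and lets M5 make all three maps weak equivalences; both work identically.
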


\begin{proof}
    Axiom M4 gives us a factorization $s=pi$, where $p$ is a fibration, $i$ a cofibration and one of the two is a weak equivalence. Since $s$ is a weak equivalence, by M5 we obtain that the three maps are weak equivalences.

   Now, since $X$ is a fibration, it follows that $i$ is a section: Indeed, since it is a trivial cofibration and $X\longrightarrow 1$ is a fibration, the lifting property guarantees the existence of the dotted arrow in the commutative diagram
 $\vcenter{
           \xymatrix@R=3ex
           {
            X \ar@{=}[r] \ar@{ >->}[d]_i | \circ & X \ar@{->>}[d]\\
            Z \ar@{-->}[ur] \ar[r] & 1,
           }
         }$
 , so $i$ has a left inverse.
Dually, $p$ is a retraction thanks to $Y$ being cofibrant.
If $X$ and $Y$ are fibrant-cofibrant, it is clear by axiom M3 that this is also the case for $Z$.
\end{proof}

From this proposition together with 
Proposition \ref{split==>loc} we have
\begin {proposition} \label{we==>eq_fc}
The inclusion
$i:\mathscr{C}_{fc}\longrightarrow \cc{H}o(\sr{C}_{fc})$ sends weak equivalences into equivalences. \cqd
\end{proposition}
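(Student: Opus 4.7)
The plan is to assemble the proof directly from Proposition \ref{we_split_fc} and Proposition \ref{split==>loc}, both of which are already in hand. First I would take an arbitrary weak equivalence $s\colon X \longrightarrow Y$ in $\mathscr{C}_{fc}$, so both $X$ and $Y$ are fibrant-cofibrant. By Proposition \ref{we_split_fc}, $s$ factors as $X \mr{i} Z \mr{p} Y$, where $i$ is a section, $p$ is a retraction, and both are weak equivalences; crucially, the same proposition guarantees that the intermediate object $Z$ is again fibrant-cofibrant, so the whole factorization takes place inside the subcategory $\mathscr{C}_{fc}$.

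Next I would verify that the hypothesis of Proposition \ref{split==>loc} is in force. The class $\mathcal{W}$ of weak equivalences satisfies the "3 for 2" property by axiom M5, which (as remarked just after Definition \ref{2x1}) implies the "2 for 1" property. Restricting to morphisms between fibrant-cofibrant objects preserves this property, since any identity factorization $gf = id_X$ in $\mathscr{C}_{fc}$ is also one in $\mathscr{C}$. Applying Proposition \ref{split==>loc} with $\sr{A} = \mathscr{C}_{fc}$ and $\Sigma = \mathcal{W} \cap \mathscr{C}_{fc}$, we conclude that both $i$ and $p$ become equivalences in $\cc{H}o(\sr{C}_{fc})$ under the inclusion 2-functor.

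Finally, since equivalences in any 2-category are stable under composition, the image of $s = p \circ i$ in $\cc{H}o(\sr{C}_{fc})$ is itself an equivalence, which is the required statement. I do not anticipate any genuine obstacle: the only point that requires a moment's care is the observation, supplied by Proposition \ref{we_split_fc}, that the intermediate object $Z$ is fibrant-cofibrant, without which Proposition \ref{split==>loc} would have to be invoked in a larger 2-category and would not yield an equivalence in $\cc{H}o(\sr{C}_{fc})$ itself.
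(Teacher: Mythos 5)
Your proof is correct and is essentially the argument the paper intends: it derives the statement by combining Proposition \ref{we_split_fc} with Proposition \ref{split==>loc} and closure of equivalences under composition, which is exactly what the paper does (the paper leaves this implicit, stating only "From this proposition together with Proposition \ref{split==>loc} we have"). Your explicit attention to the intermediate object $Z$ lying in $\mathscr{C}_{fc}$ is precisely the point that makes the argument work inside $\cc{H}o(\sr{C}_{fc})$.
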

 
{Theorem \ref{2-loc-W-split} gives us:}
 
\begin{theorem} \label{2_loc_fc}
    The inclusion
$i:\mathscr{C}_{fc}\longrightarrow \cc{H}o(\sr{C}_{fc})$ is the 2-localization, in the strict sense, of the subcategory $\mathscr{ C}_{fc}$ at the class
$\mathcal{W}_{fc} = \mathcal{W} \cap \sr{C}_{fc}$.
    
    Furthermore, the 2-functors $i^*: Hom_s(\cc{H}o(\sr{C}_{fc}), \mathscr{D})\longrightarrow Hom_{s}(\mathscr{ C}_{fc}, \mathscr{D})_+$ e $i^*: Hom_p(\cc{H}o(\sr{C}_{fc}), \mathscr{D})\longrightarrow Hom_{p}(\mathscr{C}_{fc}, \mathscr{D})_+$ are both 2-category isomorphisms.
\cqd
\end{theorem}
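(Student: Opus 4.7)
The plan is to apply Theorem \ref{2-loc-W-split} directly to $\sr{C}_{fc}$ with $\Sigma = \cc{W}_{fc}$. That theorem requires two hypotheses on the class, and both have effectively been established in the preceding results, so the proof reduces to invoking them.

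First, I would check the ``2 for 1'' property for $\cc{W}_{fc}$. Axiom M5 of the model category $\sr{C}$ gives the ``3 for 2'' property for $\cc{W}$, which, as noted immediately after Definition \ref{2x1}, implies ``2 for 1''. Since $\sr{C}_{fc}$ is a full subcategory of $\sr{C}$ and $\cc{W}_{fc}$ is by definition the restriction of $\cc{W}$, if $X \mr{f} Y \mr{g} X$ lie in $\sr{C}_{fc}$ with $gf = id_X$, the property transfers from $\sr{C}$ without any further work.

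Second, I would verify that every weak equivalence in $\sr{C}_{fc}$ factors as a composition of split weak equivalences. This is exactly what Proposition \ref{we_split_fc} provides: any weak equivalence $s$ between fibrant-cofibrant objects admits a factorization $s = p\,i$ with $i$ a section and $p$ a retraction, both weak equivalences, and with the intermediate object again fibrant-cofibrant so that the factorization takes place entirely inside $\sr{C}_{fc}$. Hence each element of $\cc{W}_{fc}$ is a two-term composition of split weak equivalences in $\sr{C}_{fc}$.

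With both hypotheses in hand, Theorem \ref{2-loc-W-split} delivers both conclusions at once: the inclusion \mbox{$i:\sr{C}_{fc}\longrightarrow \Hofc$} is the 2-localization in the strict sense, and the induced 2-functors $i^*$ are isomorphisms of 2-categories on both $Hom_s$ and $Hom_p$. I anticipate no substantive obstacle, as the genuine content is already packaged in Proposition \ref{split==>loc} (which turns split weak equivalences into equivalences in $\Hofc$) and in Theorem \ref{hom_iso} (which gives the isomorphism of hom 2-categories once weak equivalences become equivalences). The present theorem is essentially the observation that the combinatorial hypothesis needed to trigger Theorem \ref{2-loc-W-split} is fulfilled precisely when one restricts to fibrant-cofibrant objects.
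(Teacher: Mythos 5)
Your proposal is correct and follows exactly the paper's route: the paper derives this theorem by citing Proposition \ref{we_split_fc} (the split factorization inside $\sr{C}_{fc}$) together with Proposition \ref{split==>loc}, and then invoking Theorem \ref{2-loc-W-split}, just as you do. The only hypothesis you spell out that the paper leaves implicit is the transfer of the ``2 for 1'' property to the full subcategory, which is indeed immediate.
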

\emph{We remark that we shall see in \ref{category_Ho} below  that the 2-cells in 
$\cc{H}o(\sr{C}_{fc})$ are actually given by single classical Quillen's homotopies.}

\vspace{1ex}

We denote $\cc{H}o_{fc}(\sr{C})$ the full sub-2-category whose objects are the fibrant and cofibrant objects and whose 2-cells are classes of homotopies in
$\cc{H}o(\mathscr{C})$. We also have:
\begin{proposition}
The restriction $i: \mathscr{C} \supset \mathscr{C}_{fc}
\longrightarrow \cc{H}o_{fc}(\sr{C}) \subset
\cc{H}o(\mathscr{C})$ sends weak equivalences into equivalences. \cqd
\end{proposition}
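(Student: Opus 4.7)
The plan is to redo, inside $\cc{H}o(\mathscr{C})$, the argument that gives Proposition \ref{we==>eq_fc}. The reason this goes through essentially unchanged is that the witnessing constructions of Propositions \ref{we_split_fc} and \ref{split==>loc} only involve fibrant-cofibrant objects, so all 2-cells produced automatically lie in the full sub-2-category $\cc{H}o_{fc}(\sr{C})$; note that one cannot simply invoke a comparison 2-functor $\cc{H}o(\sr{C}_{fc})\longrightarrow\cc{H}o_{fc}(\sr{C})$ to reduce to \ref{we==>eq_fc}, since its existence is exactly what we want to prove.

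Given $s\in\mathcal{W}$ with $X,Y$ fibrant-cofibrant, I would first apply Proposition \ref{we_split_fc} to factor $s=p\circ j$ with $X\stackrel{j}{\longrightarrow} Z\stackrel{p}{\longrightarrow} Y$, where $Z$ is fibrant-cofibrant, $j$ is a trivial cofibration that is a section, with some left inverse $r\colon Z\longrightarrow X$, and $p$ is a trivial fibration that is a retraction. Since axiom M5 implies the ``2 for 1'' property of Definition \ref{2x1}, $r$ is itself a weak equivalence. I would then reuse the cylinder of Proposition \ref{split==>loc} verbatim: the diagram
$$
\xymatrix
   {
    Z \ar@<3pt>[rr]^{jr}\ar@<-3pt>[rr]_{id_Z}\ar[dr]_{r}
 && Z\ar[rr]^{id_Z}\ar[dl]^{r}|\circ
 && Z\\
  & X\\
   }
$$
is a cylinder in $\mathscr{C}$ and carries a homotopy $jr\Rightarrow id_Z$. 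All four objects $X,Z$ involved are fibrant-cofibrant, and both $jr$ and $id_Z$ are arrows of $\mathscr{C}_{fc}$, so the class of this homotopy is a 2-cell of the full sub-2-category $\cc{H}o_{fc}(\sr{C})$; it is invertible by Lemma \ref{2_cells_inv}, which makes $j$ an equivalence in $\cc{H}o_{fc}(\sr{C})$. A dual argument produces an analogous cylinder (now on source $Z$ with ``cone apex'' $Y$, using a right inverse of $p$) exhibiting $p$ as an equivalence in $\cc{H}o_{fc}(\sr{C})$.

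The final step is simply to note that equivalences compose, so $i(s)=i(p)\circ i(j)$ is an equivalence in $\cc{H}o_{fc}(\sr{C})$. I do not expect a genuine obstacle; the only point that needs care is bookkeeping—confirming that the split weak equivalences, the chosen one-sided inverses, and all cylinder data supplied by \ref{we_split_fc} and \ref{split==>loc} really live between fibrant-cofibrant objects, so the entire equivalence structure is visible inside the full sub-2-category.
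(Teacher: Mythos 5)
Your proposal is correct and follows essentially the route the paper intends: the paper states this proposition with no written proof precisely because it is the combination of Proposition \ref{we_split_fc} (the factorization through a fibrant-cofibrant $Z$ into a section and a retraction, both weak equivalences) with Proposition \ref{split==>loc} applied to $\sr{A}=\sr{C}$, $\Sigma=\cc{W}$, plus the observation that all cylinders, homotopies and quasi-inverses produced live among fibrant-cofibrant objects and hence, by fullness, inside $\cc{H}o_{fc}(\sr{C})$. Your explicit bookkeeping of the cylinders for $j$ and $p$ is exactly that argument spelled out.
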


{
 \begin{remark} \label{listo}
The 2-category $\cc{H}o_{fc}(\sr{C})$ does not coincide with $\cc{H}o(\sr{C}_{fc})$ because even in the case in which the cylinders can be taken in $\mathscr{C}_{fc}$ (as we will see later \ref{ready2}) the equivalence relation (Definition \ref{rel_adhoc}) is less fine since it quantifies only on 2-functors that admit an extension to $\cc{C}$. There is a 2-functor
\mbox{$j: \mathcal{H}o(\mathscr{C}_{fc}) \mr{} \cc{H}o_{fc}(\sr{C}) \subset \cc{H} o(\sr{C})$} which is the identity in objects and arrows, but in the
hom-categories
\mbox{$\mathcal{H}o(\mathscr{C}_{fc})(X,\,Y) \mr{} \fcHo(X,\,Y)$} would be neither full nor faithful. However, it turns out to be full, see Remark \ref{ready2}, but possibly not faithful in general. Assuming the normalised functorial factorization axiom, Definition
\ref{functorial}, these two categories coincide since then every 2-functor defined in $\mathscr{C}_{fc}$ extends to
$\mathscr{C}$.
\end{remark}
}                
\subsection{Quillen's homotopies as 2-cells} \label{Quillen_2cells}

To construct the homotopy category of a model category $\mathscr{C}$, Quillen introduces a notion of cylinder and of
left homotopy and shows that these determine an equivalence relation between the morphisms of the full subcategory of fibrant and cofibrant objects. Here we will see that the Quillen left homotopies determine the 2-cells of a 2-category on those objects, which coincides with the homotopy 2-category constructed in the subsection \ref{locationC_fc}. Quillen left homotopies are in particular left homotopies in the sense of the section \ref{hacheo},
so we can use the results of that section. 

\vspace{1ex}

Homotopies such that the arrow $s$ in their cylinders is a fibration play a relevant job, so it is convenient to have the following definition:

\begin{definition} \label{fibranthpy}
A cylinder as in definition \ref{cylindrical_homotopias} is fibrant when the arrow $s$ is a fibration. A fibrant homotopy is an homotopy with fibrant cylinder.
\end{definition}

We denote by $q$-cylinder and left $q$-homotopy the cylinders and the left homotopies of \cite{Qui}. We recall their definition:

\begin{definition} \label{q-cylinderdef}
    A \emph{q-cylinder} $C=(W, d_0, d_1, s)$ for an object $X$ in $\mathscr{C}$ is a factorization of the codiagonal
               $$
                \xymatrix{
                    X\amalg X \ar@/^2pc/[rrrr]^{\nabla_X} \ar[rr]^{\binom{d_0}{d_1}} && W \ar[rr]^{s} && X,
                }
               $$
    where $\binom{d_0}{d_1}$ is a cofibration and $s$ is a weak equivalence. Thus in particular $q$-cylinders are cylinders according to Definition  \ref{cylindrical_homotopias}   

\vspace{1ex}

A left q-homotopy $H = (C, h)$ from $f$ to $g$ is a homotopy according to Definition \ref{cylindrical_homotopias} whose cylinder is a q-cylinder, $hd_0=f$ and $hd_1=g$.

\vspace{-1ex}                
                
$$
\xymatrix@R=3ex
      {
        X\amalg X \hspace{1mm} \ar@{>->}[rr]^{\binom{d_0}{d_1}} 
                               \ar@/^2pc/[rrr]^{\binom{f}{g}}
                               \ar[dr]_{\nabla_X} 
      && W \ar[r]^{ h} \ar[dl]^{s}|\circ & Y, 
      \\
       & X
      }
$$
\noindent 1. Note that by axiom M4 any object $X$ has at least one fibrant  
\mbox{$q$-cylinder}.

\noindent 
2. Note that when X is cofibrant, since cofibrations are stable by pushout 
and composition, it follows that the object $W$ in any $q$-cylinder  is also cofibrant.
\end{definition}
 We work only with left $q$-homotopies, thus as before we will omit to write the word "left".

Our next task will be to show that for every homotopy $H$ between arrows of
$\mathscr{C}_{fc}$ there is a q-homotopy $H'$ that determines the same 2-cell in $\Ho$, and whose cylinder $C '$ is in $\mathscr{C}_{fc}$, Lemma \ref{lemma_3_steps}. 
In this proof we use Lemma \ref{germ==>adhoc}. Note that we will need cylinder maps in both directions. Before two lemmas of independent interest.

\begin{lemma} \label{exist_q_hpy}
Given any two objects $X$, $Y$, $f, g: X \longrightarrow Y$, and $H$  a fibrant homotopy from $f$ to 
$g$, there exists a fibrant $q$-homotopy $H'$ such that  
$H \germ H'$. 
\end{lemma}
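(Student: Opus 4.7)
The plan is to modify the cylinder $C=(W, Z, d_0, d_1, s, x)$ of $H$ in two stages, each connected to the previous by an explicit cylinder map so that germ-equivalence is preserved throughout. In the first stage I replace the base object $Z$ by $X$ and the arrow $x$ by $\mathrm{id}_X$ via a pullback; in the second stage I enforce the cofibration requirement in Definition \ref{q-cylinderdef} by a single application of M4.

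For the first stage, note that since $H$ is fibrant and $s \in \mathcal{W}$, the arrow $s$ is a trivial fibration. Form the pullback $W_1 = W \times_Z X$ of $s$ along $x$, with projections $\pi_W$ and $\pi_X$. By \ref{stablebypullback}, $\pi_X$ is a trivial fibration. The equalities $sd_0 = sd_1 = x$ induce, via the universal property, arrows $\tilde d_i := (d_i, \mathrm{id}_X): X \to W_1$ satisfying $\pi_W \tilde d_i = d_i$ and $\pi_X \tilde d_i = \mathrm{id}_X$. This produces a fibrant cylinder $C_1 := (W_1, X, \tilde d_0, \tilde d_1, \pi_X, \mathrm{id}_X)$, a cylinder map $(\pi_W, x): C_1 \to C$ (the square commutativity is exactly the pullback square), and a fibrant homotopy $H_1 := (C_1, h\pi_W)$ from $f$ to $g$. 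The identity $h\pi_W = h_1$ then witnesses $H \germ H_1$ via Definition \ref{rel_germ}.

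For the second stage, apply M4 to the codiagonal factor to obtain
$$
X \amalg X \xrightarrow{\binom{d'_0}{d'_1}} W' \xrightarrow{p} W_1
$$
with $\binom{d'_0}{d'_1} \in \mathcal{C}o\!f$ and $p$ a trivial fibration. Define $s' := \pi_X \circ p : W' \to X$, a composition of trivial fibrations hence itself a trivial fibration; one checks $s' d'_i = \pi_X p d'_i = \pi_X \tilde d_i = \mathrm{id}_X$. Thus $C' := (W', X, d'_0, d'_1, s', \mathrm{id}_X)$ is a fibrant $q$-cylinder, $(p, \mathrm{id}_X): C' \to C_1$ is a cylinder map, and $H' := (C', h\pi_W p)$ is a fibrant $q$-homotopy from $f$ to $g$ with $H_1 \germ H'$. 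Concatenating the two germ relations gives $H \germ H'$, as required.

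The main obstacle, and the reason the \emph{fibrant} hypothesis on $H$ is indispensable, is the pullback step: only once $s$ is known to be a trivial fibration does \ref{stablebypullback} yield $\pi_X$ as a trivial fibration, which in turn makes the new base $X$ with structural map $\mathrm{id}_X$ possible. Without this, no $q$-cylinder can be produced from $C$ by a direct construction, since a $q$-cylinder demands a weak equivalence landing in $X$ itself.
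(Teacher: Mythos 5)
Your argument is correct: the pullback $W_1 = W\times_Z X$ of the trivial fibration $s$ along $x$ rebases the cylinder over $X$ with structural map $\mathrm{id}_X$ and trivial fibration $\pi_X$, the M4 factorization of $\binom{\tilde d_0}{\tilde d_1}$ then supplies the cofibration required of a $q$-cylinder, and each stage is witnessed by an explicit cylinder map ($(\pi_W,x)\colon C_1\to C$ and $(p,\mathrm{id}_X)\colon C'\to C_1$, which in fact compose to a single map $C'\to C$), so $H\germ H'$ follows. This is, however, a genuinely different route from the paper's. The paper does not build the $q$-cylinder out of $C$ at all: it picks an arbitrary fibrant $q$-cylinder $C'=(W',d'_0,d'_1,s')$ for $X$ (which exists by M4 applied to the codiagonal) and uses axiom M2 to lift the cofibration $\binom{d'_0}{d'_1}$ against the trivial fibration $s$ in the square with bottom edge $xs'$, obtaining in one step a cylinder map $(t,x)\colon C'\to C$ and the homotopy $h'=ht$. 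The paper's proof is shorter and uses only the lifting axiom plus the existence of one fibrant $q$-cylinder; yours trades the lift for finite limits (M1) and pullback-stability of trivial fibrations (\ref{stablebypullback}), at the cost of an extra stage, but it has the virtue of isolating exactly where the fibrancy of $H$ is consumed (in the pullback step) and of producing the $q$-cylinder canonically from $C$ once the M4 factorization is fixed. One small caveat: your closing claim that without fibrancy ``no $q$-cylinder can be produced from $C$ by a direct construction'' overreaches slightly as a diagnosis of the lemma --- in the paper's proof fibrancy of $H$ is instead what makes $s$ a trivial fibration so that the lift exists --- but this does not affect the validity of your proof.
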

\begin{proof}
We thanks M. Szyld who send us the simple proof here. Let 
\mbox{$C = (W, Z, d_0, d_1, s, x)$} be the cylinder of $H$, and take any fibrant $q$-cylinder $C' = (W', d'_0, d'_1, s')$ for $X$. We get the following diagram, where $t$ is given by axiom M2.
$$
\xymatrix@C=6ex
   {
    X \amalg X \ar@{=}[r] 
               \ar[d]^{\binom{d'_0}{d'_1}}
               \ar[dr]
               \ar@/_1.5pc/[dd]_{\nabla_X}
  & X \amalg X \ar[d]_{\binom{d_0}{d_1}}
               \ar@/^1.5pc/[dd]^{\binom{x}{x}}
 \\
    W' \ar@{-->}[r]^{t}
       \ar@{->>}[d]^{s'} |\circ
       \ar[dr]^{xs'}
  & W  \ar@{->>}[d]^{s} |\circ
 \\
    X \ar[r]^{x} 
  & Z
   }
$$
This determines a fibrant $q$-homotopy $h' = h\,t$ from $f$ to $g$ which is in the same class as $H$ since $(t, x)$ is a cylinder map which establishes the germ relation.
\end{proof}

\begin{lemma} \label{exist_fib_hpy}
Let $Y$ be a fibrant object, then for any homotopy $H$ between arrows with codomain $Y$, there exists a fibrant homotopy $H'$ such that  
\mbox{$H \germ H'$.}
\end{lemma}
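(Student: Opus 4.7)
The plan is to replace the ``weak equivalence'' leg $s$ of the given cylinder by a fibration, using the factorization axiom M4, and then to lift the map $h$ along the resulting trivial cofibration. Fibrancy of $Y$ is precisely what permits this lifting.

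More concretely, let $H = (C, h)$ with $C = (W, Z, d_0, d_1, s, x)$ and $h : W \to Y$. First, I would apply M4 to factor $s$ as $s = p \circ j$ with $j : W \to W'$ a (trivial) cofibration and $p : W' \to Z$ a fibration. By the 3-for-2 property (axiom M5), since $s$ and $j$ are weak equivalences, so is $p$, i.e.\ $p$ is a trivial fibration. Set $d'_i := j \circ d_i$ for $i = 0, 1$; then $p d'_i = p j d_i = s d_i = x$, so $C' := (W', Z, d'_0, d'_1, p, x)$ is a fibrant cylinder for $X$.

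Next, I would construct the homotopy map $h' : W' \to Y$. Since $Y$ is fibrant, the arrow $Y \to 1$ is a fibration, and $j$ is a trivial cofibration, so by axiom M2 there is a lift $h' : W' \to Y$ in
$$
\xymatrix@R=3ex@C=6ex{
W \ar[r]^{h} \ar@{>->}[d]_{j}|\circ & Y \ar@{->>}[d] \\
W' \ar@{-->}[ur]^{h'} \ar[r] & 1
}
$$
making $h' \circ j = h$. Then $h' d'_i = h' j d_i = h d_i$, so $h'$ has the correct values on $d'_0, d'_1$; hence $H' := (C', h')$ is a fibrant homotopy with the same source and target as $H$.

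Finally, for the germ relation, I would observe that the pair $(\phi, \psi) = (j, \mathrm{id}_Z)$ is a map of cylinders $C \to C'$: the equalities $j d_i = d'_i$, $p j = s = \mathrm{id}_Z \circ s$ and $\mathrm{id}_Z \circ x = x$ are the required commutativities (see Definition \ref{cylmap}). Since moreover $h' \circ \phi = h' \circ j = h$, this exhibits $H \germ > H'$ in the sense of Definition \ref{rel_germ}, hence $H \germ H'$. The only real obstacle is the construction of $h'$, and that is handled precisely by the hypothesis that $Y$ is fibrant; everything else is a routine application of M4 and 3-for-2.
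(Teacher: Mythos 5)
Your proof is correct and is essentially the paper's own argument: factor $s$ by M4 as a trivial cofibration $j$ followed by a (trivial) fibration, lift $h$ along $j$ against the fibration $Y \to 1$, and observe that $(j, \mathrm{id}_Z)$ is a cylinder map realizing the germ relation. The only quibble is notational: with the cylinder map $C \to C'$ and $h'\circ j = h$, Definition \ref{rel_germ} gives the generating relation in the direction $H' \germ> H$ rather than $H \germ> H'$, but this is immaterial for the symmetric relation $H \germ H'$.
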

\begin{proof}
Let $f \Hpy{H} g$ be
$\xymatrix@R=3.2ex@C=3.2ex
     {
      X \ar@<3pt>[rr]^{d_0} \ar@<-3pt>[rr]_{d_1}\ar[dr]_{x} 
   && W \ar[rr]^{h}\ar [dl]^{s}|\circ 
   && Y
   \\
    & Z 
   \\
      }$.    
Consider a factorization of $s$ given by
$\xymatrix@R=3.2ex@C=3.2ex
    { 
     W \ar[rr]^{s}| \circ \ar@{>->}[dr]_{j}|\circ && Z\\
   & W' \ar@{->>}[ur]_{s'} |\circ
     }$,
and a map $h':W'\longrightarrow Y$ making the diagram commute
$\xymatrix
    { 
     W\ar[r]^{h} \ar@{ >->}[d]_(.4){j}|(.4)\circ 
   & Y \ar@{->>} [d] 
  \\
     W' \ar@{-->}[ur]^{h'} \ar[r] & 1  
     }$.
 Taking \mbox{$d'_0:=jd_0$} and $d'_1:=jd_1$ we have 
$\xymatrix@R=3.2ex@C=3.2ex
    {
     X \ar@<3pt>[rr]^{d'_0}\ar@<-3pt>[rr]_{d'_1}\ar[dr]_{x}   
  && W'\ar[rr]^{h'}\ar@{->>}[dl]^{s'}|\circ 
  && Y
  \\
   & Z 
     }$,
 which is a fibrant homotopy from $f$ to $g$  in the same class as $H$ since $(j, id_Z)$ is a cylinder map is a cylinder map which establishes the germ relation. 
\end{proof}
\begin{lemma} \label{lemma_3_steps}
Let $f, g: X \longrightarrow Y$ with $X$ and $Y$ both fibrant and cofibrant, and let $H$ be a homotopy from $f$ to $g$ in $\mathscr{C}$. Then there exists $H'$, a \mbox{$q$-homotopy} in $\mathscr{C}_{fc}$ such that $[H]=[H']$ in $\cc{H}o(\sr{C })$. Moreover, actually $H \germ H'$ holds.
\end{lemma}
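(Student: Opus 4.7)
The plan is to combine Lemmas \ref{exist_fib_hpy} and \ref{exist_q_hpy} to successively replace $H$ by a fibrant homotopy and then by a fibrant $q$-homotopy, each time within the germ relation, and finally verify that the resulting cylinder object lies in $\mathscr{C}_{fc}$.

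First, since $Y$ is fibrant, Lemma \ref{exist_fib_hpy} applied to $H$ yields a fibrant homotopy $H_1$ with $H \germ H_1$. Then applying Lemma \ref{exist_q_hpy} to $H_1$ (whose cylinder is fibrant) produces a fibrant $q$-homotopy $H' = H_2$ with $H_1 \germ H_2$. By transitivity of the germ relation we obtain $H \germ H'$, and Lemma \ref{germ==>adhoc} then gives $[H] = [H']$ in $\cc{H}o(\sr{C})$.

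It remains only to check that the cylinder $C' = (W', d'_0, d'_1, s')$ of $H'$ lies in $\mathscr{C}_{fc}$, i.e.\ that $W'$ is both fibrant and cofibrant. For cofibrancy, this is precisely the content of note~2 in Definition \ref{q-cylinderdef}: since $X$ is cofibrant, so is $X \amalg X$, and because $\binom{d'_0}{d'_1}: X \amalg X \mrc{} W'$ is a cofibration, the composite $0 \mrc{} X \amalg X \mrc{} W'$ is a cofibration by closure of $\mathcal{C}o\!f$ under composition (axiom M3), so $W'$ is cofibrant. For fibrancy, $H'$ being a \emph{fibrant} $q$-homotopy means $s': W' \mrf{} X$ is a fibration; since $X$ is fibrant, $X \mrf{} 1$ is a fibration, and hence the composite $W' \mrf{} X \mrf{} 1$ is a fibration, so $W'$ is fibrant.

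No step here is a serious obstacle: the lemmas already do the delicate work (constructing the cylinder maps that establish the germ relation via the lifting axiom M2). The only point that requires a moment's care is the fibrancy of $W'$, which would \emph{not} follow from the mere existence of a weak equivalence $W' \mre{} X$, but does follow from the stronger fact that $s'$ is a fibration --- which is exactly why we pass through Lemma \ref{exist_fib_hpy} before invoking Lemma \ref{exist_q_hpy}, rather than going directly to a $q$-homotopy.
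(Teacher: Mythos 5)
Your proof is correct and follows essentially the same route as the paper: apply Lemma \ref{exist_fib_hpy} (using $Y$ fibrant), then Lemma \ref{exist_q_hpy}, chain the germ relations, and check that $W'$ is cofibrant via note 2 of Definition \ref{q-cylinderdef} and fibrant because $s'$ is a fibration onto the fibrant object $X$. Your closing remark about why mere weak equivalence of $s'$ would not suffice for fibrancy of $W'$ is a useful observation that the paper leaves implicit.
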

\begin{proof}
Assuming only that $X$ is cofibrant and $Y$ fibrant we use the previous lemmas in turn, and obtain the desired $q$-homotopy $H'$. It remains to see that $W'$ in the cylinder of $H'$ is inside $\mathscr{C}_{fc}$, for this we need that $X$ be also fibrant. Indeed, $W'$  
is cofibrant by $2.\!$ in Definition \ref{q-cylinderdef}, and since 
$W' \mr{s'} X$ is a fibration, it follows that $W'$ is also fibrant. Finally, Lemma \ref{germ==>adhoc} finishes the proof.  
 \end{proof}    
\begin{remark} \label{ready2}
Note that this Lemma implies in particular that for the \mbox{2-functor} $\mathcal{H}o(\mathscr{C}_{fc}) \mr{} \fcHo$ of the Remark \ref{listo}, the functors between the hom-categories
\mbox{$\mathcal{H}o(\mathscr{C}_{fc})(X,\,Y) \mr{} \fcHo(X,\,Y)$} are full.
\end{remark}

\begin{comentario} \label{rcompoq}
We refer to \ref{horiz_composition_def}. The horizontal composition of classes of $q$-homotopies with arrows on the right $r[H]$ can be defined as $r[H]=[rH]$ since if $H$ is a 
\mbox{$q$-homotopy,} so it is $rH$. It does not occur the same for arrows on the left. 
Lemma \ref{lemma_3_steps} tells us that when the objects are fibrant and cofibrant, the composite $[H]l$ can be defined as $[H]l = [H']$ where $H'$ is a $q$-homotopy such that $[Hl] = [H']$.
 Quillen solves the problem posed by the definition of the left $q$-homotopy $[H]l$ in a different way. Enhancing Quillen's argument to our 2-dimensionl context, take a right \mbox{$q$-homotopy} $K$ such that $[H] = [K]$, see Proposition \ref{lefttoright}. Then $Kl$ is a right $q$-homotopy, which in turn has a left $q$-homotopy $H'$ such that 
$[Kl] = [H']$, then define $[H]l = [H']$.
\end{comentario}

%


\begin{sinnada} \label{verticalcomposition}
{\bf Vertical composition of q-homotopies\,.}
\end{sinnada}

To construct the homotopy category in \cite{Qui} Quillen shows that the (left) homotopies define an equivalence relation between the morphisms of the full subcategory of cofibrant objects. To prove the transitivity (we refer to the notations above), given $q$-homotopies $f \Hpy{H} g \Hpy{H'} l$, $W \mr{h} Y$, $W' \mr{h'} Y$,
he considers the $q$-cylinder $C''$ determined by the pushout $W''$ 
of $d_1$ and $d'_0$, then the universal property of the pushout yields the $q$-homotopy $f \Hpy{H''} l$, $W'' \mr{h''} Y$ that establishes the transitivity.  
%
%

\vspace{1ex}

In the category of topological spaces, this pushout is precisely the cylinder that is obtained by gluing the bottom of the first cylinder with the top of the second:

$\xymatrix@C=1.5pc@R=1.7pc{
&& W\\
&&\\
&& \save []+<0.5cm, 0.7cm>*{\begin{tikzpicture}
        \draw (0,0) ellipse (0.7 and 0.2);
        \draw (-0.7,0) -- (-0.7,-1.2);
        \draw (-0.7,-1.2) arc (180:360:0.7 and 0.2);
        \draw [dashed] (-0.7,-1.2) arc (180:360:0.7 and -0.2);
        \draw (0.7,-1.2) -- (0.7,0);  
        \fill [gray,opacity=0.3] (-0.7,-1.2) arc (180:360:0.7 and 0.2) -- (-0.7,-1.2) arc (180:360:0.7 and -0.2);
    \end{tikzpicture}} \ar@<15pt>@/^0.6pc/[drrr] \ar@<15pt>@/^1.7pc/[drrrrrrr]^h \restore &&&&& \hspace{1mm} W'' \\
X\hspace{1mm} \begin{tikzpicture}
        \draw (0,0) ellipse (0.7 and 0.2);
        \fill [gray,opacity=0.3] (-0.7,0) arc (180:360:0.7 and 0.2) -- (-0.7,0) arc (180:360:0.7 and -0.2);
  \end{tikzpicture} \ar@/^1pc/[urr]^{d_1}
                  \ar@<9pt>@/^1pc/[uurr]^{d_0}
                  \ar@/^0.5pc/[drr]_{d'_0}
                  \ar@<-10pt>@/^0.3pc/[ddrr]_{d'_1} &&&&&
 \save []+<1cm, 0cm>*{\hspace{2mm} \begin{tikzpicture}
        \draw (0,0) ellipse (0.7 and 0.2);
        \draw (-0.7,0) -- (-0.7,-1.2);
        \draw (-0.7,-1.2) arc (180:360:0.7 and 0.2);
        \draw [dashed] (-0.7,-1.2) arc (180:360:0.7 and -0.2);
        \draw (0.7,-1.2) -- (0.7,0);
        \draw (-0.7,-1.2) -- (-0.7,-2.4);
        \draw (-0.7,-2.4) arc (180:360:0.7 and 0.2);
        \draw [dashed] (-0.7,-2.4) arc (180:360:0.7 and -0.2);
        \draw (0.7,-2.4) -- (0.7,-1.2);
        \fill [gray,opacity=0.3] (-0.7,-1.2) arc (180:360:0.7 and 0.2) -- (-0.7,-1.2) arc (180:360:0.7 and -0.2);
    \end{tikzpicture}} \ar@{-->}[rrrr]^{h''} \restore &&&& Y \\
&& \save []+<0.5cm, -0.7cm>* {\begin{tikzpicture}
        \draw (0,0) ellipse (0.7 and 0.2);
        \draw (-0.7,0) -- (-0.7,-1.2);
        \draw (-0.7,-1.2) arc (180:360:0.7 and 0.2);
        \draw [dashed] (-0.7,-1.2) arc (180:360:0.7 and -0.2);
        \draw (0.7,-1.2) -- (0.7,0);  
        \fill [gray,opacity=0.3] (0,0) ellipse (0.7 and 0.2);
    \end{tikzpicture}} \ar@<-15pt>@/^0.2pc/[urrr] \ar@<-15pt>@/_1.5pc/[urrrrrrr]_{h'} \restore \\                  
&&\\
&& W' \\}$ 

%

\vspace{2ex}

We recall now Quillen's development of these ideas with the necessary precision needed to establish the vertical composition of arrows in the homotopy 2-category, Lemma \ref{hpycompose}.
\begin{lemma}[\cite{Qui} Ch.I, $\S$1, Lemma 2] \label{lemma2_quillen}
Let $X$ be a cofibrant object and let \mbox{$C=(W, d_0, d_1, s)$} be $q$-cylinder for $X$, then $d_0$ and $d_1$ are trivial cofibrations.  
\end{lemma}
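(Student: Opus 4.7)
The plan is to use the definition of a $q$-cylinder together with the stability of cofibrations under pushout (and the fact that $X$ is cofibrant) to show that $d_0, d_1$ are cofibrations, and then apply the $3$-for-$2$ axiom M5 to conclude they are weak equivalences.

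First I would observe that since $X$ is cofibrant, the arrow $0 \longrightarrow X$ is a cofibration, and therefore its pushout along itself, namely each of the coproduct inclusions $\iota_0, \iota_1: X \longrightarrow X \amalg X$, is a cofibration (by \ref{stablebypullback}, cofibrations are pushout-stable). By the definition \ref{q-cylinderdef} of $q$-cylinder, the map $\binom{d_0}{d_1}: X \amalg X \longrightarrow W$ is a cofibration, and by construction $d_i = \binom{d_0}{d_1} \circ \iota_i$ for $i = 0, 1$. Since cofibrations are closed under composition (axiom M3), both $d_0$ and $d_1$ are cofibrations.

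Next I would show that $d_0$ and $d_1$ are weak equivalences. From $\nabla_X = s \circ \binom{d_0}{d_1}$ and the identities $\nabla_X \circ \iota_i = id_X$ we get $s \circ d_i = id_X$ for $i = 0, 1$. Since $id_X$ is a weak equivalence and $s$ is a weak equivalence by hypothesis, axiom M5 (the $3$-for-$2$ property) applied to the composable pair $X \xrightarrow{d_i} W \xrightarrow{s} X$ yields that $d_i$ is a weak equivalence.

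Combining both parts gives that $d_0$ and $d_1$ are trivial cofibrations. I do not expect any real obstacle here: the only substantive ingredients are the pushout-stability of cofibrations (to handle the coproduct inclusions from a cofibrant object), composition closure of cofibrations, and the $3$-for-$2$ axiom; all are standard and already recorded in the preliminaries.
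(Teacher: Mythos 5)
Your proof is correct and is precisely the argument of Quillen's Lemma 2 (Ch.~I, \S1), which the paper cites rather than reproves: the coproduct inclusions are pushouts of $0 \rightarrowtail X$, so $d_i = \binom{d_0}{d_1}\iota_i$ is a cofibration, and $s d_i = id_X$ with $s$ a weak equivalence gives the rest by M5. No gaps.
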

%
%
%
This is used in the proof of the following:
\begin{lemma}[\cite{Qui}, Ch.I, $\S$1, Lemmas 3,\:4)] \label{comp_v}
    
Let $f, g, l : X \mr{} Y$, and let 
$H: f \Hpy{} g$, $H': g \Hpy{} l$ two q-homotopies.  
If $X$ is cofibrant, then there is a q-homotopy 
$H'': f \Hpy{} l$ 
as the result of the following construction:

\vspace{1ex}

Let $C=(W, d_0, d_1, s)$ and $C'=(W', d_0', d_1', s')$ be cylinders for $H$ and $H'$ respectively, and let $W''$ be the pushout of $d_{1}$ and $d'_{0}$ with inclusions $\alpha, \; \beta$ as indicated in the diagram \ref{compo2} below:
\begin{equation} \label{compo2}
    \xymatrix{
            & W \ar[dr]_{\alpha}
                 \ar@/^/[drrr]^{s}|\circ \ar@/^2pc/[rrrrd]^{h}\\
        X \hspace{2mm} \ar@{>->}@<3pt>[ur]^{d_0}
          \ar@{>->}@<-3pt>[ur]_{d_1}
          \ar@{>->}@<3pt>[dr]^{d'_0}
          \ar@{>->}@<-3pt>[dr]_{d'_1} && W'' \ar@{-->}[rr]^{s''}|\circ \ar@{ -->}@/_1pc/[rrr]_(.6){h''} && X & Y\\
            & W' \ar[ur]^{\beta}
                 \ar@/_/[urrr]_{s'}|\circ \ar@/_2pc/[rrrru]_{h'}\\}
\end{equation}
We define a cylinder 
$C''=(W'', d_0'', d_1'', s'')$, $d_0''=\alpha d_0$, $d_1''=d'_1\beta$,    
\mbox{$s'' \alpha = s, \; s'' \beta = s'$,} which is indeed a $q$-cylinder. 
Then, since $hd_1=g=h'd'_0$ we have an arrow $h''$,  
$h'' \alpha = h, \; h'' \beta = h'$ which determines a $q$-homotopy 
$H'': f \Hpy{} l$ with cylinder $C''$. 
\end{lemma}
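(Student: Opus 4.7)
The pushout $W''$ of $d_1, d'_0$ exists by M1. The plan is to verify, in order, that (i) $\alpha, \beta$ are trivial cofibrations, (ii) the induced $s''$ exists and is a weak equivalence, (iii) the key cofibration condition $\binom{d_0''}{d_1''}\in \cc{C}o\!f$ holds, and (iv) $h''$ exists and factors $H''$ as a $q$-homotopy from $f$ to $l$.

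First I would invoke Lemma \ref{lemma2_quillen}: since $X$ is cofibrant and $C,C'$ are $q$-cylinders, all four face maps $d_0,d_1,d'_0,d'_1$ are trivial cofibrations. By the pushout-stability statement \ref{stablebypullback}, $\alpha$ (the pushout of $d'_0$) and $\beta$ (the pushout of $d_1$) are trivial cofibrations. For (ii), since $sd_1 = x = s'd'_0$ (where $x=id_X$ for $q$-cylinders), the pushout property yields a unique $s'':W''\to X$ with $s''\alpha=s$ and $s''\beta=s'$; then $s''\beta=s'\in\cc{W}$ and $\beta\in\cc{W}$, so by the 3-for-2 property (M5) $s''\in\cc{W}$. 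Similarly, from $hd_1 = g = h'd'_0$ the pushout yields a unique $h'':W''\to Y$ with $h''\alpha=h$ and $h''\beta=h'$, and the equations $h''d_0''=h''\alpha d_0=hd_0=f$ and $h''d_1''=h''\beta d'_1=h'd'_1=l$ then follow.

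The main obstacle is (iii): showing $\binom{d_0''}{d_1''}=\binom{\alpha d_0}{\beta d'_1}$ is a cofibration. The trick I plan to use is to exhibit the following auxiliary pushout square:
$$
\xymatrix@R=3ex@C=7ex{
X\amalg X \ar[r]^-{\binom{d_0}{d_1}} \ar[d]_-{id_X\amalg d'_0} & W \ar[d]^{\alpha}\\
X\amalg W' \ar[r]_-{\binom{\alpha d_0}{\beta}} & W''
}
$$
A direct check using $\alpha d_1=\beta d'_0$ shows that this square commutes and satisfies the universal property of the pushout of $d_1,d'_0$ (given compatible maps out of $W$ and $X\amalg W'$, the component out of $W'$ together with the given map out of $W$ factors through $W''$ by the original pushout, and this extension matches the $X$-component). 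Since $\binom{d_0}{d_1}\in\cc{C}o\!f$ (as $C$ is a $q$-cylinder), pushout-stability of cofibrations (\ref{stablebypullback}) gives $\binom{\alpha d_0}{\beta}\in\cc{C}o\!f$. Then factor
$$
\binom{d_0''}{d_1''}\;:\;X\amalg X \xrightarrow{\;id_X\amalg d'_1\;} X\amalg W' \xrightarrow{\binom{\alpha d_0}{\beta}} W'',
$$
whose first map is a cofibration since $d'_1$ is (coproducts of cofibrations are cofibrations, being pushouts of cofibrations along the appropriate inclusions from the initial object), and whose second map was just shown to be a cofibration. Closure of $\cc{C}o\!f$ under composition (M3) concludes that $\binom{d_0''}{d_1''}$ is a cofibration, so $C''$ is a $q$-cylinder and $H''$ is the required $q$-homotopy $f \Hpy{} l$.
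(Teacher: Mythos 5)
Your proof is correct and follows essentially the same route as the paper's: trivial cofibrations for $\alpha,\beta$ by pushout-stability, $s''\in\cc{W}$ by M5, and the cofibration condition for $\binom{d_0''}{d_1''}$ via a two-step factorization through a coproduct, with the middle map shown to be a cofibration as a pushout of one of the given corner cofibrations. The only difference is a mirror-image choice: the paper factors through $W\amalg X$ and pushes out $\binom{d'_0}{d'_1}$, while you factor through $X\amalg W'$ and push out $\binom{d_0}{d_1}$ — the same argument with the roles of $C$ and $C'$ interchanged.
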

\begin{remark} \label{comp_v_2}
If $X$ and $Y$ are fibrant-cofibrant, in Lemma \ref{comp_v} we may assume that the cylinder $C''$ is inside $\sr{C}_{fc}$.  To see that we may assume $W''$ fibrant, we proceed as in  \ref{exist_fib_hpy}, and observe that if $H$ is a $q$-homotopy, then $H'$ still is a $q$-homotopy. That $W''$ is cofibrant follows by Lemma \ref{lemma2_quillen} or 
\mbox{$2.\!$ in Definition \ref{q-cylinderdef}.}  \cqd
\end{remark}


\begin{lemma} \label{hpycompose}
    Given $f, g, l: X \mr{} Y$ in $\mathscr{C}_{fc}$, and two composable \mbox{$q$-homotopies} inside  $\sr{C}_{fc}$, 
    $H: f \Hpy{} g $ and $H': g \Hpy{} l$, there exists a 
    \mbox{$q$-homotopy} $H'': f \Hpy{} l$ inside  $\sr{C}_{fc}$ such that
 $[H'']=[H', H]$ in $\cc{H}o(\sr{C}_{fc})$.
\end{lemma}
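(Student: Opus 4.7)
My plan is to let $H''$ be the $q$-homotopy produced by the pushout construction of Lemma~\ref{comp_v} applied to $H$ and $H'$, and then verify directly that $\widehat{FH''}=\widehat{FH'}\circ\widehat{FH}$ for every 2-functor $F\colon\sr{C}_{fc}\longrightarrow\sr{D}$ which sends weak equivalences to equivalences; by Definition~\ref{rel_adhoc} this is exactly the condition $[H'']=[H',H]$ in $\cc{H}o(\sr{C}_{fc})$. Remark~\ref{comp_v_2} guarantees we may take the cylinder $C''$ of $H''$ inside $\sr{C}_{fc}$: the fibrant replacement of $W''$ provides a cylinder map in the sense of Definition~\ref{cylmap} and hence a $q$-homotopy in the same germ class, which by Lemma~\ref{germ==>adhoc} is in the same ad-hoc class.

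The heart of the argument is the semantic verification. With the notation of Lemma~\ref{comp_v}, consider the candidate 2-cell
\[
\gamma \;:=\; (F\beta\,\widehat{FC'})\circ(F\alpha\,\widehat{FC})\colon Fd''_0\Longrightarrow Fd''_1,
\]
which is well defined because the pushout identity $\alpha d_1=\beta d'_0$ is preserved by $F$, so the intermediate target $F\alpha\,Fd_1$ matches the intermediate source $F\beta\,Fd'_0$. Whiskering on the left by $Fs''$ and invoking the identities $s''\alpha=s$ and $s''\beta=s'$, together with the defining equations $Fs\,\widehat{FC}=id_{FX}=Fs'\,\widehat{FC'}$, we obtain $Fs''\gamma=id_{FX}\circ id_{FX}=id_{FX}$. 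By the uniqueness clause characterising $\widehat{FC''}$ in Definition~\ref{FH}, this forces $\gamma=\widehat{FC''}$. Whiskering on the left by $Fh''$ and using $h''\alpha=h$ and $h''\beta=h'$ then yields
\[
\widehat{FH''}=Fh''\,\widehat{FC''}=(Fh'\,\widehat{FC'})\circ(Fh\,\widehat{FC})=\widehat{FH'}\circ\widehat{FH},
\]
which is exactly the required equality.

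I do not expect a substantive obstacle: the whole computation is forced by the universal property of the pushout $W''$ in $\sr{C}$ together with the uniqueness clause defining $\widehat{FC}$, $\widehat{FC'}$, and $\widehat{FC''}$ in $\sr{D}$. The one delicate bookkeeping point is the passage to fibrant-cofibrant cylinders, where one must verify that adjusting $W''$ does not alter the ad-hoc class; this is handled by Remark~\ref{comp_v_2} combined with Lemma~\ref{germ==>adhoc}, so no extra work is needed there.
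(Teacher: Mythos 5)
Your proposal is correct and follows essentially the same route as the paper: take $H''$ from the pushout construction of Lemma~\ref{comp_v}, use Remark~\ref{comp_v_2} to place the cylinder inside $\sr{C}_{fc}$, and then verify semantically that $F\beta\,\widehat{FC'}\circ F\alpha\,\widehat{FC}=\widehat{FC''}$ by whiskering with $Fs''$ and invoking the uniqueness clause, before whiskering with $Fh''$. Your explicit check that the two whiskered 2-cells are composable via the pushout identity $\alpha d_1=\beta d'_0$ is a detail the paper leaves implicit, but the argument is the same.
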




\begin{proof}
Consider the homotopy $H''=(C'', h'')$ in Lemma \ref{comp_v} which by Remark \ref{comp_v_2} is inside $\sr{C}_{fc}$. Let's see that 
$\widehat{FH''}=\widehat{FH'} \circ \widehat{FH}$ for every 2-functor
$F: \mathscr{C}_{fc} \longrightarrow \mathscr{D}$ which sends weak equivalences into equivalences. Consider diagram \ref{compo2}:
 
 \vspace{1ex}
    
Since $\widehat{FH'}\circ \widehat{FH} = 
Fh'\widehat{FC'} \circ Fh\widehat{FC} =
Fh''F\beta\widehat{FC'} \circ Fh''F\alpha \widehat{FC} = Fh'' (F\beta \widehat{FC'} \circ F\alpha \widehat{FC})$, 
it suffices to see that 
$F\beta \widehat{FC'} \circ F\alpha \widehat{FC} =\widehat{FC''}$. 
From the equations $s=s''\alpha$ and $s'=s''\beta$ it follows that $id_{FX}= id_{FX} \circ id_{FX}= 
Fs'' F\beta \widehat{FC'} \circ Fs''F\alpha \widehat{FC} =
Fs''(F\beta \widehat{FC'} \circ F\alpha \widehat{FC})$, so that 
\mbox{$F\beta \widehat{FC'} \circ F \alpha \widehat{FC} = \widehat{FC''}$.}
\end{proof}

With what we have seen so far we can ensure that when we restrict ourselves to the subcategory $\mathscr{C}_{fc}$ there is a correspondence between the classes of $q$-homotopies and the classes of finite sequences of composable $q$-homotopies, and this together with Lemma \ref{lemma_3_steps} give us the following proposition:

{
\begin{corollary} \label{category_Ho}
 The 2-category $\cc{H}o(\sr{C}_{fc})$ in Theorem \ref{2_loc_fc} coincides with the one whose 2-cells are the classes of $q$-homotopies.
\end{corollary}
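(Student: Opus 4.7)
The aim is to show that sending a $q$-homotopy to its class as a length-one sequence of general homotopies provides a well-defined isomorphism of 2-categories between $\cc{H}o(\sr{C}_{fc})$ and the candidate 2-category with the same objects and arrows and with 2-cells equivalence classes of $q$-homotopies under the (restriction of) the ad-hoc relation. Injectivity of this map at the level of 2-cells is immediate by the very definition of the equivalence relation on $q$-homotopies, so the content of the statement is surjectivity at the level of 2-cells together with the verification that vertical and horizontal compositions are respected.

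For surjectivity I would take a 2-cell $[H_n,\ldots,H_1]$ in $\cc{H}o(\sr{C}_{fc})$ and first apply Lemma \ref{lemma_3_steps} componentwise, replacing each $H_i$ by a $q$-homotopy $H'_i$ inside $\sr{C}_{fc}$ with $[H_i]=[H'_i]$. Then I would use Lemma \ref{hpycompose} inductively to collapse the sequence $[H'_n,\ldots,H'_1]$ into a single $q$-homotopy $H''$ inside $\sr{C}_{fc}$ whose class equals the original 2-cell. Combined with the injectivity remark above, this yields the bijection between 2-cells of $\cc{H}o(\sr{C}_{fc})$ and classes of $q$-homotopies.

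It then remains to check that the 2-category structures coincide under this bijection. Identities are realised by the constant homotopies described in \ref{hpy_identity}, which are $q$-homotopies. Vertical composition of two classes of $q$-homotopies is exhibited, at the level of representatives, by Lemma \ref{hpycompose}; the resulting class is independent of the choice of pushout and auxiliary factorizations because those lemmas deliver their conclusions already at the level of the ad-hoc relation. For horizontal composition, pre-whiskering $r[H]=[rH]$ on the right preserves the $q$-homotopy condition and is therefore immediate. The genuine obstacle, flagged in Comment \ref{rcompoq}, is post-whiskering $[H]l$: the homotopy $Hl$ is in general not a $q$-homotopy because its cylinder $Cl$ need not be a $q$-cylinder. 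This is precisely the situation addressed by Lemma \ref{lemma_3_steps}, which provides a $q$-homotopy representative of the class $[Hl]$ inside $\sr{C}_{fc}$; any two such representatives lie in the same class by the bijection already established. With this, all axioms of horizontal and vertical composition transport from $\cc{H}o(\sr{C}_{fc})$ to the candidate 2-category, and the two 2-categories coincide.
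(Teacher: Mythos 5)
Your proposal is correct and follows essentially the same route as the paper: the paper's justification for this corollary is precisely the combination of Lemma \ref{lemma_3_steps} (to replace each homotopy in a sequence by a $q$-homotopy inside $\sr{C}_{fc}$) with Lemma \ref{hpycompose} (to collapse a composable sequence of $q$-homotopies into a single one), yielding the correspondence between classes of $q$-homotopies and classes of finite sequences. Your additional checks on identities and on the whiskerings $r[H]$ and $[H]l$ only make explicit what the paper leaves implicit (cf.\ Comment \ref{rcompoq}).
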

}

\vspace{2ex}

\begin{sinnada} \label{Quillen_loc}
{\bf Classic Quillen's localization of $\sr{C}_{fc}$\,.}
\end{sinnada}

In this item we will see how to obtain the homotopy category of
$\mathscr{C}_{fc}$ defined by Quillen from the 2-category $\cc{H}o(\sr{C}_{fc})$.


We will obtain Quillen's localization by applying the functor of connected components \mbox{$\pi _0 : 2$-$Cat \mr{} Cat$.}
Recall that if $\sr{D}$ is a 2-category, for every pair of objects $X, \: Y$, $(\pi_0\sr{D})(X, \: Y)$ is the set of connected components of the category $\sr{D}(X, \: Y)$.
Remember that $\pi_0$ is the left adjoint of the functor 
\mbox{$d: Cat \longrightarrow 2$-$Cat$} which associates each category $\mathscr{X}$ with itself seen as a discrete 2-category.
Abusing notation we have an obvious 2-functor
\mbox{$\sr{D} \mr{\pi_0} \pi_0\sr{D}$} which is universal with respect to 2-functors of $\sr{D}$ with values in a 1-category. We have:
    


\emph{(1) The functor $Hom(\pi_0\mathscr{D}, \mathscr{X}) \mr{\pi_0^*} Hom_p(\mathscr{D}, \mathscr{X})$ is an isomorphism of \mbox{categories.}}

%
The composition of $\pi_0$ with $\, i\,$:
$\mathscr{C}_{fc} \mr{i} \cc{H}o(\sr{C}_{fc})
                        \mr{\pi_0} \pi_0\cc{H}o(\sr{C}_{fc})$,
sends weak equivalences into isomorphisms since $i(\mathcal{W})\subseteq Equiv\catHofc{C}$, and, from the way in which $\pi_0$ is defined in the 2-cells, it sends equivalences into isomorphisms. Then Theorem \ref{2_loc_fc} together with (1) above show that the composite functor $\pi_0 \,i$ is the Quillen localization.

\begin{theorem} \label{Quillenloc1}
 The functor defined by the composition
 $\mathscr{C}_{fc} \mr{\pi_0 i} \pi_0\cc{H}o(\sr{C}_{fc})$
is the localization of $\mathscr{C}_{fc}$ with respect to the class
$\mathcal{W}$. The precomposition
$$
(\pi_0i)^* = i^* \pi_0^* :Hom(\pi_0\cc{H}o(\sr{C}_{fc}), \mathscr{X})\longrightarrow Hom(\mathscr{ C}_{fc}, \mathscr{X})_+
$$
is a isomorphism of categories for every category $\mathscr{X}$.
\cqd
\end{theorem}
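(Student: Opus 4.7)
The plan is to verify the two clauses required for a localization, namely (a) that $\pi_0 i$ sends weak equivalences to isomorphisms in $\sr{X}$, and (b) that precomposition with $\pi_0 i$ gives a bijection on Hom-sets, and to do so by directly exhibiting $(\pi_0 i)^*$ as a composite of two already-known isomorphisms, namely $(\pi_0 i)^* = i^* \circ \pi_0^*$.

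For clause (a), I would argue in two steps. By Proposition \ref{we==>eq_fc}, the inclusion $i:\sr{C}_{fc}\longrightarrow \catHo{C_{fc}}$ carries each $w\in\mathcal{W}$ to an equivalence in $\catHo{C_{fc}}$. Then I observe that the functor $\pi_0$ carries equivalences to isomorphisms: if $w$ is an equivalence, pick a quasi-inverse $w'$ and invertible 2-cells $wb w' \Rightarrow id$ and $w'w\Rightarrow id$; these 2-cells witness that $[w][w']=[id]$ and $[w'][w]=[id]$ in the hom-sets of $\pi_0\catHo{C_{fc}}$, so $[w]$ is an isomorphism. Composing, $(\pi_0 i)(w)$ is an isomorphism.

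For clause (b), the key input is Theorem \ref{2_loc_fc}, which gives an isomorphism
\[
i^*: Hom_{p}(\catHo{C_{fc}},\sr{X}) \longrightarrow Hom_{p}(\sr{C}_{fc},\sr{X})_+
\]
for every 2-category $\sr{X}$; when $\sr{X}$ is a mere category regarded as a discrete 2-category, every pseudonatural transformation into $\sr{X}$ is forced to be 2-natural with identity structure cells (since $\sr{X}$ has only identity 2-cells), and every modification is an identity, so the codomain $Hom_{p}(\sr{C}_{fc},\sr{X})_+$ is just $Hom(\sr{C}_{fc},\sr{X})_+$, and similarly for the domain. Combined with the universal property (1) of the preceding paragraph, which gives the isomorphism $\pi_0^*: Hom(\pi_0\catHo{C_{fc}},\sr{X}) \to Hom_p(\catHo{C_{fc}},\sr{X})$, the composite $i^*\pi_0^* = (\pi_0 i)^*$ is an isomorphism of categories.

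I expect no serious obstacle: the proof is essentially a bookkeeping argument assembling Theorem \ref{2_loc_fc} and the universal property of $\pi_0$. The only point needing care is the identification between the 2-categorical $Hom_p$ and the 1-categorical $Hom$ when the target 2-category is discrete, i.e.\ verifying that a 2-functor $\catHo{C_{fc}} \to \sr{X}$ collapses all 2-cells (so factors uniquely through $\pi_0$) and that pseudonatural transformations between such 2-functors are precisely ordinary natural transformations between the induced functors on $\pi_0\catHo{C_{fc}}$. Once this identification is recorded, the theorem follows immediately.
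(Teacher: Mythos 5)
Your proof is correct and follows essentially the same route as the paper: clause (a) is obtained by composing Proposition \ref{we==>eq_fc} with the observation that $\pi_0$ sends equivalences to isomorphisms, and clause (b) by factoring $(\pi_0 i)^* = i^*\pi_0^*$ through the isomorphisms of Theorem \ref{2_loc_fc} and the universal property of $\pi_0$. Your extra care in identifying $Hom_p(-,\mathscr{X})$ with $Hom(-,\mathscr{X})$ for a discrete target $\mathscr{X}$ makes explicit a point the paper leaves implicit (aside from the harmless typo ``$wb\,w'$'' for $ww'$).
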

{Beyond the formality of the proof, we can understand how it works by looking at the following diagram:}

\vspace{-1.5ex}

$$
\xymatrix@C=3ex@R=3ex
    {
     \mathscr{C}_{fc} \ar@{^{(}->}[rr]^{i}
                      \ar[ddr]_(.4){F}
  && \cc{H}o(\sr{C}_{fc}) \ar[rr]^{\pi_0}
         \ar@{-->}[ddl]^(.4){\!\exists !}
  && \pi_0\cc{H}o(\sr{C}_{fc}), \ar@{-->}[ddlll]^(.4){\!\exists !}
 \\
 \\
   & {\mathscr{X} } \\}
$$ 

\vspace{-4ex}

\cqd

\vspace{2ex}

\begin{remark} \label{pi0Hfc=pi0fcH}
Note than by Remark \ref{ready2} the hom categories of the 
\mbox{2-categories} 
$\fcHo$ and $\Hofc$ have the same connected components.
\end{remark} 
    
\subsection{The 2-localization of the category \texorpdfstring{$\mathscr{C}$}{}} \label{2_loc_srC}
%
%
%
We want to define a functor \mbox{$\mathscr{C} \longrightarrow \mathscr{C}_{fc}$} and take the composition with \mbox{$i: \mathscr{C}_{fc} \longrightarrow \Hofc$.} We will prove that this 2-functor, which we will call
\mbox{$q:\mathscr{C} \longrightarrow \Hofc $,} is the 2-localization of the category $\mathscr{C}$ with respect to the class $\mathcal{W}$, which is our main goal . For this we will consider the full subcategories $\mathscr{C}_f$ and $\mathscr{C}_c$ of fibrant and cofibrant objects respectively, and two assignments $R: \mathscr{C} \longrightarrow \mathscr{C }_f$ and $Q: \mathscr{C} \longrightarrow \mathscr{C}_c$ which can be constructed from the model category axioms in Definition \ref{model_cat},
but it is clear on inspection of the construction that these assignments are not necessarily functorial.

We adopt here an ad hoc solution to this problem by requiring that the factorization of the axiom M4 be functorial, a requirement frequently used in the post-Quillen literature.
Assuming that the model structure of $\mathscr{C}$ admits a functorial factorization, the arrows $\xymatrix{ \mathscr{C} \ar[r]^{Q} &\mathscr{C}_c \ar[r]^{R} & \mathscr{C}_{fc} }$ will effectively determine a functor which together with what we saw in subsection \ref{locationC_fc} for the subcategory $\mathscr{C}_{fc }$, will allow us to conclude the localization theorem. We remark that also the fundamental theorem \ref{hom_iso} in subsection 
\ref{fundamental} will be necessary.

\subsubsection{Fibrant and cofibrant replacements} \label{fibrant-cofibrant}

From now on we will work on a model category $\mathscr{C}$ in which the factorizations of axiom M4 are \emph{normal functorial} in the sense of Definition \ref{functorial}.

\begin{definition} \label{cofibrantreplacement}
    Let $\mathscr{C}_c \subseteq \mathscr{C}$ be the subcategory of cofibrant objects. We define a functor  
    $Q: \mathscr{C} \longrightarrow \mathscr{C}$ that takes its values in the subcategory $\sr{C}_c$ and such that
if $X$ is already cofibrant, $QX = X$ and $p_X = id_X$, 
and such that if $f$ is a weak equivalence, so is $Qf$. We do this as follows:
    
Let $F:\overrightarrow{\mathscr{C}} \longrightarrow \mathscr{C}$ be the functor of the normal functorial realisation associated with the factorization, 
 we set $QX=F(0 \rightarrow X)$ and 
  $Qf= F(id_0, f)$. Then by the functoriality of $F$ it follows that $Q$ is also a functor:
    \begin{center}
        $\xymatrix@R=4ex@C=4ex{
            0 \ar[rr] \ar[d] && 0\ar[d]\\
                QX \ar[rr]|{id_{QX}} \ar@{->>}[d]_{p_X}|\circ && 
                QX \ar@{->>}[d]^{p_X}|\circ  & \hspace{3mm}\text{and}\\
                X \ar[rr]_{id_X} && Y\\
        }$
        \hspace{6mm}
        $\xymatrix@R=4ex@C=4ex{
            0\ar[rr] \ar[d] && 0 \ar[rr] \ar[d] && 0 \ar[d] \\
            QX \ar@/^1.5pc/[rrrr]|(.4){Q(gf)} \ar[rr]_{Qf} \ar@{->>}[d]_{p_X}|\circ && QY \ar[rr]_{Qg} \ar@{->>}[d]_{p_Y}|\circ && QZ \ar@{->>}[d]^{p_Z}|\circ \\
            X \ar[rr]_{f} && Y\ar[rr]_{g} && Z\\
        }$
    \end{center} 
By axiom M5, if $f$ is a weak equivalence, then so is $Qf$.

Dually, we also obtain a functor $R:\mathscr{C} \longrightarrow \mathscr{C}$ with $RX$ a fibrant object, and a trivial cofibration $v_X: X \longrightarrow RX$, this time factoring   the morphism $X \longrightarrow 1$,  and for $f:X \longrightarrow Y$, an arrow $Rf: RX \longrightarrow RY$ satisfying \mbox{$Rfv_X=v_Yf$.} Also, if $X$ is already fibrant, $RX = X$ and $v_X = id_X$, and if $f$ is a weak equivalence, then so is $Rf$.
\end{definition}
\begin{remark}
The functors $Q$ and $R$ are known as \emph{cofibrant replacement} and \mbox{\emph{fibrant replacement},} respectively, and determine by composition a functor \mbox{$RQ: \sr{C} \mr{} \sr{C}$} with values in the subcategory
$\sr{C}_{fc}$. \cqd
\end{remark}
\begin{remark} \label{assignment}
We have natural transformations $p:Q \Longrightarrow Id$ and 
    \mbox{$v: Id \Longrightarrow R$} defined by $p_X$ and $v_X$, respectively.
    From the definitions of $Q$ and $R$ in the morphisms we have the naturality equations for $p$ and $v$.
{
Combining both, we have
\begin{center}
$
\xymatrix@C=4ex
       {
        go
       &
        Q \ar@2{->}[l]_{p}
          \ar@2{->}[r]^{vQ}
       &
        RQ
       }
$,
and for each $X \in \sr{C}$,
$
\xymatrix@C=4ex
       {
        X
       &
        QX \ar@{->}[l]_{p_X}
          \ar@{->}[r]^{v_{QX}}
       &
        RQX
       }
$,
\end{center}
 where $p_X$ and $v_{QX}$ are weak equivalences. \cqd
}
\end{remark}
\begin{proposition} \label{functor_q}
We have the following commutative diagram whose construction  we explain step by step in the proof below:
$$
\xymatrix@C=8ex
   {
    \sr{C} \ar[r]^{RQ}
           \ar[dr]^{r}
           \ar[ddr]^{q}
           \ar[ddd]^{i}
  & \sr{C} \ar@/^3.8pc/[ddd]^{i}
  \\
    {} & \sr{C}_{fc} \ar[d]^{i'}
                     \ar[u]_{j}
                     \ar@/^2.2pc/[dd]^{ij}
  \\
    {} & \cc{H}o(\sr{C}_{fc}) \ar[d]^{\ol{j}}
  \\
    \cc{H}o(\sr{C}) \ar[ru]^{\ol{r}}
                    \ar[r]^{\ol{RQ}}
  & \cc{H}o(\sr{C})
   }
$$
\end{proposition}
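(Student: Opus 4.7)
The plan is to construct each of the arrows in the diagram in turn and verify the commutations as we go, invoking Theorem \ref{hom_iso} to induce the overlined 2-functors from their 1-categorical counterparts.

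First, I would define $r: \sr{C} \to \sr{C}_{fc}$ as the corestriction of $RQ$ to the full subcategory $\sr{C}_{fc}$; this is well-defined since $RQ$ lands there, and yields the commutation $j \circ r = RQ$ on the top of the diagram. Set $q := i' \circ r$ to obtain the next commutation. The functor $r$ preserves weak equivalences: indeed, $Q$ does, since for any $f \in \cc{W}$ the commutative square with sides $p_X, p_Y \in \cc{W}$ gives $f \circ p_X = p_Y \circ Qf \in \cc{W}$, whence $Qf \in \cc{W}$ by the 3-for-2 axiom; dually for $R$. Composing with $i'$, which by Proposition \ref{we==>eq_fc} sends weak equivalences in $\sr{C}_{fc}$ to equivalences, we conclude that $q$ sends $\cc{W}$ into equivalences in $\cc{H}o(\sr{C}_{fc})$.

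Next, Theorem \ref{hom_iso} supplies the overlined 2-functors. Applied to $q$, it produces a unique 2-functor $\ol{r}: \cc{H}o(\sr{C}) \to \cc{H}o(\sr{C}_{fc})$ with $\ol{r} \circ i = q$. To obtain $\ol{j}$ with $\ol{j} \circ i' = i \circ j$, and $\ol{RQ}$ with $\ol{RQ} \circ i = i \circ RQ$, I must check that $i \circ j$ and $i \circ RQ$ send the respective classes of weak equivalences to equivalences in $\cc{H}o(\sr{C})$. The crux is this: any weak equivalence between fibrant-cofibrant objects factors, by Proposition \ref{we_split_fc}, as a section followed by a retraction, each itself a weak equivalence, and Proposition \ref{split==>loc} then shows that each factor becomes an equivalence in $\cc{H}o(\sr{C})$. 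Since $RQ$ takes $\cc{W}$ into $\cc{W} \cap \sr{C}_{fc}$, this argument applies both to $i \circ j$ on $\cc{W} \cap \sr{C}_{fc}$ and to $i \circ RQ$ on all of $\cc{W}$.

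Finally, the relation $\ol{RQ} = \ol{j} \circ \ol{r}$ follows from the uniqueness clause of Theorem \ref{hom_iso}: precomposition with $i$ gives $\ol{j} \circ \ol{r} \circ i = \ol{j} \circ i' \circ r = i \circ j \circ r = i \circ RQ = \ol{RQ} \circ i$, and uniqueness of the extension along $i$ forces the desired equality. The main obstacle is the passage from \emph{weak equivalence between fibrant-cofibrant objects} to \emph{equivalence in $\cc{H}o(\sr{C})$}: a priori the inclusion 2-functor $i: \sr{C} \to \cc{H}o(\sr{C})$ is not known to send all of $\cc{W}$ to equivalences, so the detour through sections and retractions provided by Propositions \ref{we_split_fc} and \ref{split==>loc} is genuinely essential.
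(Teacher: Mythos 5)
Your proof is correct and follows essentially the same route as the paper: corestrict $RQ$ to get $r$, set $q = i'\,r$, check that $q$ and $i\,j$ send weak equivalences to equivalences via Propositions \ref{we_split_fc} and \ref{split==>loc} (together with the 3-for-2 argument showing $Q$ and $R$ preserve $\cc{W}$), and induce $\ol{r}$, $\ol{j}$, $\ol{RQ}$ by the universal property of the inclusion into the homotopy 2-category. The only real difference is cosmetic: you obtain $\ol{RQ}$ as the extension of $i\circ RQ$ along $i$ and then identify it with $\ol{j}\circ\ol{r}$ by uniqueness, whereas the paper simply defines $\ol{RQ} := \ol{j}\,\ol{r}$ and observes that it satisfies $\ol{RQ}\,i = i\,RQ$; both are legitimate and the uniqueness clause makes them agree. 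One thing you do not record, which the paper establishes inside this proof and cites later (in the proof of Theorem \ref{pseudoequivalence} as ``Proposition \ref{functor_q}, item 4''), is the pair of identities $r\,j = id$ and $\ol{r}\,\ol{j} = id$: the first depends on the \emph{normality} of the functorial factorization ($QX = X$, $p_X = id_X$ and $RX = X$, $v_X = id_X$ when $X$ is already fibrant-cofibrant), and the second then follows by computing $\ol{r}\,\ol{j}\,i' = \ol{r}\,i\,j = i'\,r\,j = i'$ and invoking uniqueness once more. Since these identities are not literally edges of the displayed diagram their omission is not a gap in what the proposition asserts, but they are part of the payload of the paper's proof and are needed downstream.
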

\begin{proof} ${}$

{\bf 1.} Restricting $R$ to the subcategory $\mathscr{C}_c$ and precomposing with $Q$ we have a functor $r: \mathscr{C} \longrightarrow \mathscr{C}_{fc}$ , where "$j$"\, denotes the full inclusion $\sr{C}_{fc} \subset \sr{C}$. Note that $r$ is the correstriction of the composition $RQ$, and that $r\,j\, = id$. 

{\bf 2.} We define $q:\mathscr{C} \longrightarrow \Hofc$ to be the composition, $q = i'\,r$.

{\bf 3.} From Definition \ref{cofibrantreplacement} and Proposition \ref{we==>eq_fc} we know that $\,q\,$ sends weak equivalences into equivalences, then from Proposition \ref{pu_i} there exists a unique 2-functor $\,\ol{r}\,$ such that $\ol{r} \, i = q = i'\,r$.

{\bf 4.} We consider the restriction $i\,j$ of $\,i\,$, and again by Proposition \ref{we==>eq_fc} we know that $i\,j$ sends weak equivalences into equivalences, and from Theorem
\ref{2_loc_fc} we get a unique 2-functor $\,\ol{j}\,$ such that $\ol{j}\, i' = i\,j$. Precomposing with $i'$ we compute 
$\ol{r}\,\ol{j}\,i' = \ol{r}\,i\,j =i'\,r\,j= i'$, then Theorem \ref{2_loc_fc} gives us
$\ol{r}\,\ol{j} = id$.

{\bf 5.} Finally we have a 2-functor $\ol{RQ} = \ol{j}\:\ol{r}$ wich  satisfies 
$\ol{RQ}\,i = i\, RQ$.
\end{proof}
\begin{remark} \label{noQnoRinHo}
We remark that there are not 2-functors $\ol{Q}$ and  $\ol{R}$ of cofibrant and fibrant replacement defined on the homotopy category $\cc{H}o(\sr{C} )$,
$\ol{RQ}$ is not a composite $\ol{R} \circ \ol{Q}$.
\end{remark}

\subsubsection{The 2-localization theorem} \label{2location}

We want to see that the functor $q:\mathscr{C} \longrightarrow \Hofc$ determines the \mbox{2-localization} of $\sr{C}$ at the weak equivalences, that is, that we have a 2-category pseudoequivalence
$$
q^*: Hom_p(\cc{H}o(\sr{C}_{fc}), \mathscr{D}) \mr{}
             Hom_{p}(\mathscr{C}, \mathscr{D})_+
$$
for every 2-category $\mathscr{D}$.

\vspace{1ex}

Since $q = \ol{r} \, i$, to prove that $q^\ast$ is a pseudoequivalence it would be enough to see that 
$\ol{r}^*: Hom_p(\cc{H}o(\sr{C}_{fc}), \sr{D}) \longrightarrow Hom_{p}(\cc{H}o(\sr{C}), \mathscr{D})_+$
as  well as \mbox{$i^*: Hom_p(\cc{H}o(\sr{C}), \sr{D})_+ \longrightarrow Hom_{p}(\mathscr{C}, \mathscr{D })_+$}
are both 2-category pseudoequivalences. 

Concerning $i$, we already saw in Theorem \ref{hom_iso} that in fact  
it is an isomorphism. Now:

\begin{theorem} \label{pseudoequivalence}
    The 2-functors
$$
\xymatrix
    {
     Hom_p(\cc{H}o(\sr{C}_{fc}), \sr{D})
                     \ar@<5pt>[rr]^{\bar{r}^*}
                     \ar@<-5pt>@{<-}[rr]_{\bar{j}^*}
   &&
     Hom_{p}(\Ho, \mathscr{D})_+
    }
$$
determine a 2-category pseudoequivalence.
\end{theorem}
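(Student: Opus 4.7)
The strategy is to exhibit $\bar{r}^{\ast}$ and $\bar{j}^{\ast}$ as quasi-inverse pseudo-equivalences. One composite is strict: by step 4 of Proposition \ref{functor_q}, $\bar{r}\,\bar{j} = Id_{\cc{H}o(\sr{C}_{fc})}$, so by contravariance of precomposition $\bar{j}^{\ast}\,\bar{r}^{\ast} = Id$ on $Hom_p(\cc{H}o(\sr{C}_{fc}),\sr{D})$. The work therefore lies in showing that the other composite $\bar{r}^{\ast}\,\bar{j}^{\ast}$ is pseudonaturally equivalent to the identity on $Hom_p(\Ho,\sr{D})_+$, i.e.\ in producing a pseudonatural equivalence $\theta_F : F \Rightarrow F\,\bar{j}\,\bar{r}$ for each $F \in Hom_p(\Ho,\sr{D})_+$, pseudonatural in $F$.

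The plan for $\theta_F$ is to build it first on the $\sr{C}$-side and then lift via Theorem \ref{hom_iso}. Precomposing with $i:\sr{C}\to\Ho$, step 5 of Proposition \ref{functor_q} gives $F\,\bar{j}\,\bar{r}\,i = F\,i\,RQ$, while $F\,i$ itself lies in $Hom_p(\sr{C},\sr{D})_+$. The natural transformations $p: Q \Rightarrow Id$ and $v: Id \Rightarrow R$ of Remark \ref{assignment}, whiskered with $F\,i$, yield the zig-zag
$$
F\,i \;\overset{Fi\,p}{\Longleftarrow}\; F\,i\,Q \;\overset{Fi\,v_Q}{\Longrightarrow}\; F\,i\,R\,Q
$$
in $Hom_p(\sr{C},\sr{D})$. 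Each component $F\,i(p_X)$ and $F\,i(v_{QX})$ is $F$ applied to the image in $\Ho$ of a weak equivalence, hence is an equivalence in $\sr{D}$ because $F$ belongs to the $+$ subcategory. By Proposition \ref{prop_infinite} both whiskered transformations are equivalences in $Hom_p(\sr{C},\sr{D})$. Choosing a quasi-inverse of $Fi\,p$ and composing with $Fi\,v_Q$ produces a pseudonatural equivalence $F\,i \simeq F\,i\,RQ = F\,\bar{j}\,\bar{r}\,i$ in $Hom_p(\sr{C},\sr{D})_+$. Since by Theorem \ref{hom_iso} the 2-functor $i^{\ast}: Hom_p(\Ho,\sr{D})_+ \to Hom_p(\sr{C},\sr{D})_+$ is a 2-category isomorphism, and both $F$ and $F\,\bar{j}\,\bar{r}$ lie in $Hom_p(\Ho,\sr{D})_+$, this equivalence lifts uniquely to the desired pseudonatural equivalence $\theta_F : F \Rightarrow F\,\bar{j}\,\bar{r}$.

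For pseudonaturality of the assignment $F \mapsto \theta_F$, given $\alpha : F \Rightarrow G$ in $Hom_p(\Ho,\sr{D})_+$ the invertible modification comparing $\theta_G \circ \alpha$ with $\bar{r}^{\ast}\bar{j}^{\ast}(\alpha) \circ \theta_F$ is produced by the same lifting principle: both composites are pseudonatural transformations $F \Rightarrow G\,\bar{j}\,\bar{r}$, and by Theorem \ref{hom_iso} it suffices to compare their restrictions along $i$, where the identity becomes a routine consequence of the naturality of $p$, $v$ and of $\alpha$. The behaviour on modifications of $Hom_p(\Ho,\sr{D})_+$ is handled identically. The main obstacle is thus the coherent bookkeeping of modifications needed to promote the pointwise equivalences into a full 2-natural equivalence of 2-functors, but every such verification is entirely controlled by Theorem \ref{hom_iso}: each required equation on the $\Ho$ side reduces to a strict identity on the $\sr{C}$ side, where it follows from the naturality and functoriality supplied by Definition \ref{cofibrantreplacement} and Remark \ref{assignment}.
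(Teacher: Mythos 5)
Your construction of the equivalence $\theta_F$ is exactly the paper's: the same reduction to $\bar{j}^{\ast}\bar{r}^{\ast}=id$ via Proposition \ref{functor_q}, the same zig-zag $Fi \Leftarrow FiQ \Rightarrow FiRQ$ from Remark \ref{assignment}, the same appeal to Proposition \ref{prop_infinite} to invert $Fip$, and the same lift along the isomorphism $i^{\ast}$ of Theorem \ref{hom_iso}. Up to that point the proposal is correct and matches the paper.

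The gap is in the last step, the pseudonaturality of $F \mapsto \theta_F$. You assert that after restricting along $i$ ``the identity becomes a routine consequence of the naturality of $p$, $v$ and of $\alpha$.'' This is not so: the two restricted transformations are \emph{not} equal on the $\sr{C}$ side, so you cannot obtain the comparison as an identity and transport it through the isomorphism $i^{\ast}$. Two separate obstructions prevent strictness. First, $\alpha$ is only pseudonatural, so the square with horizontal arrows $\alpha_{iQX}, \alpha_{iX}$ and vertical arrows $Fi(p_X), Gi(p_X)$ commutes only up to the structural 2-cell $\alpha_{(ip_X)}$, not on the nose. Second, even where the underlying square is strict, the chosen quasi-inverses $F'(ip_X)$, $G'(ip_X)$ are not natural in anything; the corresponding square with quasi-inverses commutes only up to a 2-cell that must be manufactured from the unit $id \Rightarrow G'(ip_X)G(ip_X)$ and counit $F(ip_X)F'(ip_X) \Rightarrow id$. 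The paper's proof does precisely this: it defines a non-identity 2-cell $\sigma'_{(ip_X)}$ as the composite
$\sigma_{iQX}F'(ip_X) \Rightarrow G'(ip_X)G(ip_X)\sigma_{iQX}F'(ip_X) \Rightarrow G'(ip_X)\sigma_{iX}F(ip_X)F'(ip_X) \Rightarrow G'(ip_X)\sigma_{iX}$,
pastes it with the inverse of $\sigma_{(iv_{QX})}$, and takes the resulting pasting as the pointwise value of the modification $\eta_{\sigma}$. That explicit construction (and the verification that it satisfies the modification axiom) is the actual content of this step, and it is what is missing from your argument; ``naturality of $p$ and $v$'' alone cannot supply it because the failure of strictness lives entirely in $\alpha$ and in the quasi-inverses, not in $p$ or $v$.
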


%
\begin{proof}
We have
$\ol{j}^*\,\ol{r}^* = (\ol{r}\,\ol{j})^* = id^* = id$ (Proposition \ref{functor_q}, item 4), then it only remains to establish an equivalence of
$\ol{r}^*\,\ol{j}^* = (\ol{j}\:\ol{r})^*$ with $id$.

\vspace{1ex}

Let $F: \Ho \mr{} \sr{D}$ be such that it sends weak equivalences into equivalences. Applying the 2-functor
$\sr{C} \mr{i} \Ho$ followed by $F$ in the Remark \ref{assignment}, we have 2-natural transformations
$ \xymatrix@C=4ex
       {
        fi
       &
        FiQ \ar@2{->}[l]_{Fip}
          \ar@2{->}[r]^{FivQ}
       &
        FiRQ
       }
$. We know that for each object $X$ in $\mathscr{C}$, both $v_X$ and $p_X$ are weak equivalences (Remark \ref{assignment}), so $(Fip)_X$ and $( FivQ)_X$ are equivalences in $\sr{D}$.
 Let $(Fip)'$ be a pseudonatural quasi-inverse of $Fip$ obtained by  Proposition \ref{prop_infinite}. We define
$\eta_F: Fi \Mr{} FiRQ$ as the composition
$FivQ \circ (Fip)'$. But $iRQ = \ol{RQ}\,i$ and
$\ol{RQ} = \ol{j}\:\ol{r}$ (Proposition \ref{functor_q}, item 5), then we have
$\eta_F: Fi \Mr{} F \,\ol{j}\:\ol{r}\,i$. Finally Theorem
\ref{hom_iso} gives us an equivalence
$\eta_F: F \Mr{} F \,\ol{j}\:\ol{r} = \ol{r}^*\,\ol{j}^* (F) $ in
$Hom_{p}(\Ho, \mathscr{D})_+$

\vspace{1ex}

In turn, if $\eta$ were pseudonatural in the variable $F$, then
it would be the equivalence sought, since by the above, each component $\eta_F$ is so. Let's see that
$\eta$ is indeed pseudo-natural.

\vspace{1ex}
     
     In the objects of $Hom_{p}(\Ho, \mathscr{D})_+$, we have $\eta$ already defined. Now, given an arrow in $Hom_{p}(\Ho, \mathscr{D})_+$, that is a pseudonatural transformation
$\sigma: F \Longrightarrow G$, we want to define an invertible modification
     \mbox{$\eta_{\sigma}:\ol{r}^*\,\ol{j}^*(\sigma)\circ \eta_F \longrightarrow \eta_G \circ \sigma$}\,:
$$
\xymatrix@R=0.5pc@C=1.5pc
     {
      F \ar@{=>}[rr]^{\eta_F}
        \ar@{=>}[dd]_{\sigma}
    &&
      \ol{r}^*\ol{j}^*(F)
        \ar@{=>}[dd]^{\ol{r}^*\ol{j}^*(\sigma)}
        \ar@{}[ddll]|{\rotatebox[origin=c]{45}
                         {$\leftarrow$} \eta_{\sigma}}
   \\ \\
      G \ar@{=>}[rr]_{\eta_G}
    &&
      \ol{r}^*\ol{j}^*(G)
    \\
      }
$$

\vspace{1ex}

For each $X$ in $\sr{C}$ we have the following diagram that allows us to define $\eta_\sigma$ pointwise,
recall $\ol{r}^*\,\ol{j}^* (F) = F\,\ol{j}\:\ol{r}$ and
\ref{functor_q}, 5.:
$$
\xymatrix@R=3pc@C=3pc
      {
       FiX \ar[r]^{F'(ip_X)}
          \ar@/^2.3pc/[rr]^{(\eta_F)_X}
          \ar[d]_{\sigma_{iX}}
          \ar@{}[dr]|{\rotatebox[origin=c]{45}{$\Leftarrow$}
                                       \sigma '_{(ip_X)}}
      &
       FiQX
           \ar[r]^{F(iv_{QX})}
           \ar[d]^(.4){\sigma_{iQX}}
      &
       FiRQX
          \ar[d]^{\sigma_{iRQX}}
          \ar@{}[dl]|{\rotatebox[origin=c]{45}{$\Leftarrow$}
                                       \sigma_{(iv_{QX})}}
     \\
       GiX \ar[r]^{{G'(ip_X)}}
           \ar@/_2.3pc/[rr]^{(\eta_G)_X}
     & GiQX \ar[r]^{G(iv_{QX})}
     & GiRQX
       }
$$
The 2-cell $\sigma_{(iv_{QX})}$ in the right square is the inverse of the pseudo-natural structure of $\sigma$.  In the left square we denote $F'(ip_X)$, $G'(ip_X)$  the quasi-inverses of $F(p_{iX})$, $G(p_{iX})$. We have
$id \Mr{\alpha_G} G'(ip_X)\,G(ip_X)$ and
\mbox{$F(ip_X)\,F'(ip_X) \Mr{\beta_F} id$.}
The 2-cell $\sigma'_{(ip_X)}$ is defined by the following composition:

$
\sigma_{iQX}\,F'(ip_X)
                \Mrr{8}{\alpha_G\,\Box\,\Box\,}
G'(ip_X)\,G(ip_X)\,\sigma_{iQX}\,F'(ip_X)
                \Mrr{12}{\Box\,\sigma_{(ip_X)}\,\Box}
$

\vspace{1ex}

$
\hspace{22ex}
G'(ip_X)\,\sigma_{iX}\,F(ip_X)\,F'(ip_X)
               \Mrr{8}{\Box\,\Box\,\beta_F}
G'(ip_X)\,\sigma_{iX}
$

\vspace{1ex}

\noindent Composing this pasting diagram the desired modification is obtained.
\end{proof}

This finishes the proof of the localization theorem:
                     
\begin{theorem} \label{2-localization}
    Given a model category $\mathscr{C}$, the 2-functor
\mbox{$q: \mathscr{C} \longrightarrow \cc{H}o(\sr{C}_{fc})$} in  Proposition \ref{functor_q}, item 2., is the 2-localization of $ \mathscr{C}$ at the class $\mathcal{W}$, that is, it sends the elements of $\mathcal{W}$ into equivalences, and the 2-functor $q^* : Hom_p(\cc{H}o(\sr{C}_{fc}), \mathscr{D}) \longrightarrow Hom_{p}(\sr{C}, \mathscr{D})_+$ of precomposition with $q$, is a 2-category pseudoequivalence (see \ref{pseudoequiv}) for every 2-category $\mathscr{D}$. 
\end{theorem}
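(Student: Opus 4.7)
The plan is to exploit the factorization $q = \bar{r}\,i$ given in Proposition \ref{functor_q}, item 3, which reduces the theorem to facts already established: the fundamental Theorem \ref{hom_iso} for $i$ and the pseudoequivalence Theorem \ref{pseudoequivalence} for $\bar{r}$. The proof splits naturally into (1) checking that $q$ sends weak equivalences into equivalences, and (2) analyzing the precomposition $q^{\ast}$.

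For (1), I would first use the alternative factorization $q = i'\,r$ with $r = RQ$ corestricted to $\sr{C}_{fc}$, also from Proposition \ref{functor_q}. By Definition \ref{cofibrantreplacement} together with axiom M5, both $Q$ and $R$ preserve weak equivalences, and hence so does $r$. Proposition \ref{we==>eq_fc} then says that $i' : \sr{C}_{fc} \to \Hofc$ sends weak equivalences to equivalences, and composing yields the claim for $q$.

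For (2), writing $q^{\ast} = i^{\ast} \circ \bar{r}^{\ast}$, the target of $\bar{r}^{\ast}$ is the full sub-2-category $Hom_p(\Ho, \mathscr{D})_+$: for any $G : \Hofc \to \mathscr{D}$ and any $w \in \mathcal{W}$ we have $(G\,\bar{r})(i(w)) = G(q(w))$, which is an equivalence by (1) since 2-functors preserve equivalences. Now Theorem \ref{hom_iso} tells us that $i^{\ast}$ is an isomorphism of 2-categories, and Theorem \ref{pseudoequivalence} tells us that $\bar{r}^{\ast}$ is a pseudoequivalence. It is then a formal matter to check that the composition of a pseudoequivalence with an isomorphism of 2-categories is again a pseudoequivalence: one transports the quasi-inverse, the unit and the counit through the isomorphism, and the triangle equations are preserved.

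Given the substantial preparation in Sections \ref{hacheo} and \ref{Loc_model}, there is no real obstacle here; the theorem is essentially a clean assembly of Proposition \ref{functor_q} (which produces $\bar{r}$ together with the commutation $\bar{r}\,i = q$), Theorem \ref{hom_iso} (the fundamental isomorphism for $i^{\ast}$), and Theorem \ref{pseudoequivalence} (the pseudoequivalence for $\bar{r}^{\ast}$). The only minor verification beyond that is the landing of $\bar{r}^{\ast}$ in the $_+$ subcategory, which is an immediate consequence of (1).
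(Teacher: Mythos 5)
Your proposal is correct and follows essentially the same route as the paper: the paper also deduces the theorem from the factorization $q=\ol{r}\,i$ of Proposition \ref{functor_q}, invoking Theorem \ref{hom_iso} for $i^{*}$ being an isomorphism and Theorem \ref{pseudoequivalence} for $\ol{r}^{*}$ being a pseudoequivalence, with the preservation of weak equivalences coming from Definition \ref{cofibrantreplacement} and Proposition \ref{we==>eq_fc} exactly as you argue. Your explicit checks (that $\ol{r}^{*}$ lands in the $_{+}$ subcategory, and that a pseudoequivalence composed with an isomorphism is a pseudoequivalence) are left implicit in the paper but are the right verifications.
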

It is important to recall that although it is not necessary for the proof, in this theorem  $\Hofc = \fcHo$ since we are assuming functorial factorization, see \ref{listo}.

\begin{sinnada} 
{\bf Local smallness.}
We finish by observing that in Corollary \ref{loc_small} in the appendix it is established that the hom categories of the homotopy \mbox{2-categories} in the 2-localizations are locally small.
\end{sinnada}

\begin{sinnada} \label{Quillen_loc_2}
{\bf Classic Quillen's localization of $\sr{C}$.}
\end{sinnada}
 Applying the functor of connected components $\pi_0$ we conclude a result analogous to that of the item \ref{Quillen_loc}. More precisely, what we have is that the composition
$\mathscr{C} \mr{q} \cc{H}o(\sr{C}_{fc}) \mr{\pi_0} \pi_0\cc{H}o(\sr{C}_{ fc})$
is the localization of the category $\mathscr{C}$ with at the class $\cc{W}$ of weak equivalence, in the sense that its precomposition induces an equivalence of categories. So it is not exactly Quillen's localization which establishes an isomorphism, but it is equivalent to it.
From Theorem \ref{2-localization} as we did in the item \ref{Quillen_loc} to obtain Theorem \ref{Quillenloc1}, and observing certain details, we now obtain
\begin{theorem} \label{Quillen_loc'}
 The functor defined by the composition
 $\mathscr{C} \mr{\pi_0 q} \pi_0\cc{H}o(\sr{C}_{fc})$
is the localization of $\mathscr{C}$ with respect to the class
$\mathcal{W}$ in the sense that it sends weak equivalences into isomorphisms, and the \mbox{precomposition}
$$
(\pi_0q)^* = i^* \pi_0^* :Hom(\pi_0\cc{H}o(\sr{C}_{fc}), \mathscr{X})\longrightarrow Hom(\mathscr{ C}, \mathscr{X})_+
$$
is an equivalence of categories, for every category $\mathscr{X}$.
\cqd
\end{theorem}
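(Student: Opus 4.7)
The plan is to derive this as a corollary of Theorem \ref{2-localization} combined with the adjunction $\pi_0 \dashv d$, where $d$ embeds a category as a discrete 2-category. The core idea is that when the target is a 1-category, the 2-categorical machinery $Hom_p(-, \mathscr{X})$ collapses to an ordinary functor category.

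For the first assertion, I would check that $\pi_0 q$ sends weak equivalences to isomorphisms. Given $w \in \cc{W}$, Theorem \ref{2-localization} produces a quasi-inverse $g$ for $q(w)$ together with invertible 2-cells $q(w)\,g \Mr{} id$ and $g\,q(w) \Mr{} id$ in $\cc{H}o(\sr{C}_{fc})$. Applying $\pi_0$ collapses these 2-cells to equalities of arrows, so that $\pi_0 q(w)$ and $\pi_0 g$ become mutually inverse in $\pi_0\cc{H}o(\sr{C}_{fc})$.

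For the universal property I would factor $(\pi_0 q)^* = q^* \circ \pi_0^*$. Observation (1) in item \ref{Quillen_loc} supplies the isomorphism of categories $\pi_0^*: Hom(\pi_0\cc{H}o(\sr{C}_{fc}), \mathscr{X}) \to Hom_p(\cc{H}o(\sr{C}_{fc}), \mathscr{X})$, and Theorem \ref{2-localization} supplies the 2-pseudoequivalence $q^*: Hom_p(\cc{H}o(\sr{C}_{fc}), \mathscr{X}) \to Hom_p(\sr{C}, \mathscr{X})_+$. Composing yields $(\pi_0 q)^*$ as a composite of an isomorphism followed by a pseudoequivalence of 2-categories.

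The main (minor) obstacle is to argue that the 2-categorical pseudoequivalence $q^*$ actually degenerates to an ordinary equivalence of 1-categories when $\mathscr{X}$ is a 1-category. The reason is a sequence of trivialities: a 2-functor into a discrete 2-category necessarily sends every 2-cell to an identity; the structure 2-cell $\eta_f$ of any pseudonatural transformation between such 2-functors is then forced to be an identity, so such a transformation is precisely an ordinary natural transformation; and modifications between these are themselves identities. Consequently both $Hom_p(\cc{H}o(\sr{C}_{fc}), \mathscr{X})$ and $Hom_p(\sr{C}, \mathscr{X})_+$ are 1-categories, and a pseudoequivalence of 2-categories between two such degenerate objects is nothing more than an equivalence of the underlying 1-categories. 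Combining with the isomorphism $\pi_0^*$ delivers the claimed equivalence of categories.
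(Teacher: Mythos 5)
Your proposal is correct and follows essentially the same route as the paper: apply Theorem \ref{2-localization} together with the isomorphism $\pi_0^*$ coming from the adjunction $\pi_0 \dashv d$ (observation (1) in \ref{Quillen_loc}), and then observe that over a $1$-category target all pseudonatural transformations are natural, all modifications are identities, so the $2$-categorical pseudoequivalence degenerates to an ordinary equivalence of categories — which is exactly the ``certain details'' the paper leaves implicit. Note in passing that you correctly read the factorization as $(\pi_0 q)^* = q^*\,\pi_0^*$; the $i^*$ appearing in the displayed formula of the statement is a leftover from Theorem \ref{Quillenloc1}.
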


\begin{appendix}

\section{Without functorial factorization} 

In this appendix we assume that the reader is familiar with the concept of pseudofunctor. The 2-localization obtained assuming functorial factorization differs with the one we will describe in this appendix using the original Quillen's factorization axiom. The homotopy category will be $\cc{H}o_{fc}(\sr{C})$, not $\cc{H}o(\sr{C}_{fc})$, which is a different 2-category in the absence of functorial factorization, see \ref{listo}. The localising arrow is a pseudofunctor, not a 2-functor, and more fundamentally,  there are fibrant and cofibrant replacement on the homotopy 2-category, which allow to obtain the fibrant-cofibrant replacements as a composition, while in the treatment with functorial factorization there is no fibrant or cofibrant replacement 2-functors on the homotopy 2-category, only a fibrant-cofibrant simultaneous replacement.

\vspace{1ex} 

We refer to the definition \ref{cofibrantreplacement} and consider the following diagrams:
\begin{equation} \label{pseudoestructura}
\xymatrix@C=7ex@R=5ex
   {
     QX \ar@<1.2ex>[r]^{Q(id_X)}
        \ar@{}| {} 
        \ar@<-1.2ex>[r]^{id_{QX}}
        \ar[d]^{p_X}
   & QX \ar@{->>}[d]^{p_X}|{\circ} 
  \\
     X \ar@<1.2ex>[r]^{id_X}
        \ar@{}[r]|{} 
        \ar@<-1.2ex>[r]^{id_X}
   & X
   }
\hspace{5ex}  
\xymatrix@C=6ex@R=5ex
  {
    QX \ar[r]^{Qf}
       \ar@{->>}[d]^{p_X}|\circ
       \ar@/^1.5pc/[rr]^{Q(gf)}
  & QY \ar[r]^{Qg}
       \ar@{->>}[d]^{p_Y}|\circ
  & QZ \ar@{->>}[d]^{p_Z}|\circ
 \\ X \ar[r]^f
      \ar@/_1.5pc/[rr]^{gf}
  & Y \ar[r]^g
  & Z
  }
\end{equation}
Clearly, the functoriality of $Q$ would follow if $p_X$ and $p_Z$ were monomorphisms, but this is not the case. However, in the homotopy category the equalities in the lower part of the diagrams actually lift to homotopies $id_{QX} \Hpy{} Q(id_X)$, 
$Qg \, Qf \Hpy{} Q(gf)$ in the upper part, yielding  2-cells that determine a \emph{pseudofunctor structure} for $Q$. Moreover, it extends to a pseudofunctor defined in the homotopy 2-category, fact which is necessary to perform the composition with the dual fibrant replacement pseudofunctor. 

\vspace{1ex}

 We briefly describe now how this is done, complete definitions and proofs are developed in \cite{e.d.} for model bicategories. The particular case of model categories merits to be treated apart because it is much simpler, avoiding the difficulties and complications necessary to be able to deal with non invertible 2-cells, which actually give rise to a different theory.

 
\vspace{1ex}

Without functorial factorzation pseudofunctors become unavoidable, and we need Theorem \ref{hom_iso} for the hom 2-categories whose objects are now pseudofunctors. 
It is immediate to observe that the construction of $\HAo$,
\ref{hpy_identity}, \ref{horiz_composition_def}, as well as the proofs in \ref{pu_i}, \ref{key}, \ref{prop_key} and \ref{categorical_2aspect}, can be done literally word by word by requiring that the cylinders (Definition   \ref{cylindrical_homotopias}), have the arrow $W \mr{s} Z$ in a subcategory
$\cc{F} \subset \Sigma$ containing the identities.
A 2-category \mbox{$\HAof \subset \HAo$} is thus determined. 
%
%
%
%
On the other hand, Theorem \ref{hom_iso} generalizes (but not easily) to the hom 2-category $pHom_p$ which has pseudofunctors as objects, for a proof we refer the reader to \mbox{\cite[3.38, 3.40]{e.d.2} }.  We have the following generalisation of Theorem \ref{hom_iso}:
\begin{theorem} \label{i_iso_for_pHom}
The inclusion 2-functor $i: \sr{A} \mr{} \HAof$  induces a 
\mbox{2-category} isomorphism
$i^*: pHom_{p}(\HAof, \mathscr{D})_+\longrightarrow pHom_{p}(\mathscr{A}, \mathscr{D})_+$. \cqd
\end{theorem}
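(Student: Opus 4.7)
The plan is to follow the same four-step skeleton used to establish Theorem \ref{hom_iso} — namely, the analogues of Proposition \ref{pu_i}, Lemma \ref{key}, Proposition \ref{prop_key}, and Lemma \ref{categorical_2aspect} — but now with every occurrence of ``2-functor'' in the source category replaced by ``pseudofunctor'', and every equation that previously used strict functoriality replaced by the corresponding coherent one involving the pseudofunctor structure constraints. The restriction to $\HAof$ (cylinders whose weak equivalence $s$ lies in the subcategory $\cc{F}\subset\Sigma$) plays only the role already discussed in the excerpt: it guarantees that the constructions of \ref{hpy_identity} and \ref{horiz_composition_def} go through verbatim.

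First, I extend a pseudofunctor $F:\sr{A}\mr{}\sr{D}$ in $pHom_p(\sr{A},\sr{D})_+$ to a pseudofunctor $\widetilde{F}:\HAof\mr{}\sr{D}$ by setting $\widetilde{F}=F$ on objects and arrows and inheriting the structure constraints $F_{g,f}\colon Fg\,Ff\Rightarrow F(gf)$ and $F_X\colon id_{FX}\Rightarrow F(id_X)$. For a cylinder $C=(W,Z,d_0,d_1,s,x)$ with $s\in\cc{F}$, I define $\widehat{FC}\colon Fd_0\Rightarrow Fd_1$ to be the unique 2-cell such that, after whiskering on the left by $Fs$ and pasting with the coherence cells $F_{s,d_0}$, $F_{s,d_1}$, one recovers $Fx$ from both sides; existence and uniqueness are given by the fully-faithfulness of $(Fs)_\ast$ (Propositions \ref{ffes=equi}, \ref{s*_equivalence}), which is valid because $Fs$ is an equivalence. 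Then I put $\widetilde{F}([H])=Fh\cdot\widehat{FC}$ and extend to sequences. Well-definedness on the ad-hoc relation is automatic, since that relation is defined by quantifying exactly over this class of $\widehat{F(-)}$. Verifying that $\widetilde{F}$ satisfies the pseudofunctor axioms on 2-cells is routine: composition cells act only on arrows and are identical to those of $F$.

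The main technical step, and the main obstacle, is the coherent analogue of Lemma \ref{key}. Given pseudofunctors $F,G\in pHom_p(\sr{A},\sr{D})_+$ and a pseudonatural $\theta\colon F\Rightarrow G$ with structure 2-cells $\theta_f\colon Gf\,\theta_X\Rightarrow\theta_Y\,Ff$, I need to show that the pasting of $\theta_W\cdot\widehat{FC}$ agrees, modulo the naturality squares $\theta_s$, $\theta_{d_0}$, $\theta_{d_1}$, $\theta_x$ and the coherence cells of $F$ and $G$, with $\widehat{GC}\cdot\theta_X$. The computation is the same in spirit as the proof of \ref{key}: both sides, after whiskering by $Gs$ and pasting with the relevant coherence and pseudonaturality cells, compose to the same 2-cell $Gx\,\theta_X\cong \theta_Z\,Fx$ by the pseudonaturality axioms of $\theta$ on the equalities $sd_0=x=sd_1$; the uniqueness clause defining $\widehat{GC}$ then forces the equality. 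The hard part is the bookkeeping of the pasting diagram — a handful of 2-cells must be composed in the right order, and one has to invoke the pseudonaturality axiom for composable pairs to merge $\theta_s\cdot\theta_{d_i}$ into $\theta_{sd_i}=\theta_x$. This is presumably where \cite[3.38, 3.40]{e.d.2} spends most of its work.

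With that lemma in hand, I define $\widetilde{\theta}\colon\widetilde{F}\Rightarrow\widetilde{G}$ by $\widetilde{\theta}_X=\theta_X$ and $\widetilde{\theta}_f=\theta_f$, and verify the pseudonaturality square for a 2-cell $[H]$ by combining the naturality of $\theta$ on the arrow $h$ with the lemma just proved. Modifications lift in exactly the same way as in Lemma \ref{categorical_2aspect}, with the extra coherence cells of $F$, $G$ cancelling symmetrically on both sides of the defining equation. Assembling the three steps, the 2-functor $i^\ast$ is bijective on objects (by the extension $\widetilde{F}$, uniquely determined by the requirement $\widetilde{F}i=F$ together with the identity $\widetilde{F}([H_0])=\widehat{FC}$ argued as in \ref{pu_i}), on 1-cells, and on 2-cells, whence it is an isomorphism of 2-categories. \cqd
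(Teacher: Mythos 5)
You should first note that the paper does not actually prove this theorem: it states it with \cqd\ and defers to \cite[3.38, 3.40]{e.d.2}, explicitly warning that Theorem \ref{hom_iso} ``generalizes (but not easily)'' to the hom 2-categories $pHom_p$. Your four-step skeleton (the pseudo-analogues of \ref{pu_i}, \ref{key}, \ref{prop_key}, \ref{categorical_2aspect}) is surely the intended strategy, but the one place where you declare the matter settled is exactly where the paper locates the difficulty, and there is a genuine gap there.

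The gap is your claim that well-definedness of $\widetilde{F}$ on equivalence classes is ``automatic, since that relation is defined by quantifying exactly over this class of $\widehat{F(-)}$.'' It is not. The ad-hoc relation (Definition \ref{rel_adhoc}) declares $H\sim H'$ when $\widehat{FH}=\widehat{FH'}$ for every \emph{2-functor} $F$ sending $\Sigma$ into equivalences; pseudofunctors are not in the quantifier, and the construction of $\HAof$ in the appendix reuses this same relation word for word. Since 2-functors form a proper subclass of pseudofunctors, the relation obtained by quantifying over pseudofunctors is a priori strictly finer, so for a genuine pseudofunctor $F$ the implication $H\sim H'\Rightarrow\widehat{FH}=\widehat{FH'}$ requires an argument (a strictification or comparison of the two relations on fibrant homotopies); without it $\widetilde{F}$ is simply not defined on the 2-cells of $\HAof$ and the bijectivity of $i^*$ on objects collapses. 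This, rather than the pasting bookkeeping in the pseudo-analogue of Lemma \ref{key} (which you rightly flag as laborious but which is mechanical), is the step that cannot be waved through. Two smaller defects: the formula $\widetilde{F}([H])=Fh\,\widehat{FC}$ is not type-correct for a pseudofunctor --- it must be conjugated by the structure cells $F_{h,d_0}$, $F_{h,d_1}$ to yield a 2-cell $Ff\Mr{}Fg$ --- and the pseudofunctor axiom expressing naturality of the constraints $F_{g,f}$ in their 2-cell variables must be checked against the horizontal composition of homotopy classes; it does not hold merely because ``composition cells act only on arrows.''
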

Note that regardless of $\cc{F}$, the subscript "$+$" \, still indicates the 2-functors that send the whole class
$\Sigma$ into equivalences.

\vspace{1ex}

We set $\sr{A} = \sr{C}$ and let
$\cc{F}$ be the class of trivial fibrations so that  
$\Hof$ is the 2-category with 2-cells determined by the fibrant (left) homotopies.

\vspace{1ex}

The concepts dual to that of cylinder and homotopy are also necessary, and fibrant homotopies and their duals now play an essential role. It is necessary to establish several facts about them, which we do now.

\subsection{Right homotopies}

Path objects and right homotopies are cylinders and homotopies in the dual model category. As with left homotopies, we get a more general version of Quillen's right homotopies. A homotopy 2-category can also be obtained by taking as 2-cells the classes of finite sequences of right homotopies. We say that two right homotopies $K$ and $K'$ are in the same class if and only if $\widehat{FK}=\widehat{FK'}$ for every 2-functor $F: \mathscr{ C} \longrightarrow \mathscr{D}$ such that $F(\mathcal{W}) \subseteq Equiv(\mathscr{D})$, for every 2-category $\mathscr{D}$. We define the vertical and horizontal compositions as we did before, thus building a 2-category that we call $\Ho^r$ to distinguish it from
$\Ho$ which in this section we will denote $\Ho^l$.

Considering the inclusion $i: \mathscr{C} \longrightarrow \Ho ^r$, the precomposition also gives us an isomorphism as in Theorem \ref{hom_iso}, but this does not mean that both categories are isomorphic, since the 2-functor $i$ in neither of the two cases sends weak equivalences into equivalences.

On the other hand, a right homotopy $K=(P, k)$ and a left homotopy $H=(C, h)$ are related if $\widehat{FK}=\widehat{FH}$ for all $F: \mathscr{C} \longrightarrow \mathscr{D}$ that sends weak equivalences into equivalences. Although $\Ho ^r$ and $\Ho ^l$ do not have to coincide, the following result will allow us to establish that both categories coincide when the objects are fibrant and cofibrant, $\Hofc ^r = \Hofc ^ l$.

Now we explicitly precise  this concepts and considerations in the case of
$q$-homotopies:

\begin{definition} \label{q-path}
A q-path object $P=(V, \delta_0, \delta_1, \sigma)$ for an object $Y$ is a factorization of the diagonal
            \begin{center}
                $\xymatrix{
                    Y \ar@/^2pc/[rrrr]^{\Delta_Y} \ar[rr]^{\sigma} &&V \ar[rr]^(.4){(\delta_0, \delta_1)}  && Y\times Y,
                }$
            \end{center}
    with $(\delta_0, \delta_1)$ a fibration and $\sigma$ a weak equivalence. When $\sigma$ is a cofibration, we say that the path object is \emph{cofibrant}.
     By axiom M4 every object $Y$ has at least one cofibrant  path-object.
\end{definition}

\begin{definition} \label{q-h-right}
A right q-homotop\'y $K: \xymatrix{f \ar@2{~>}[r] & g}$ with $q$-path-object \mbox{$P=(V, \delta_0, \delta_1 , \sigma)$} (for $Y$) is a morphism $k: X \longrightarrow V$ satisfying $\delta_0k=f$ and $\delta_1k=g.$
    \begin{center}
            $\xymatrix@R=2ex{
                    X \ar[r]^{k} & V \ar@{->>}[rr]^{(\delta_0, \delta_1)} && Y \times Y\\
                    && Y \ar[ul]^{\sigma}| \circ \ar[ur]_{\Delta_Y}
            }$,
    \end{center}
\end{definition}

\begin{proposition} \label{lefttoright}
    Let $f,g: X \longrightarrow Y$, $H = (C, h): \xymatrix{ f \ar@2{~>}[r] & g }$ be a \mbox{$q$-homotopy} with cylinder $C=(W, Z, d_0, d_1, s, x)$, and let $P=(V, \delta_0, \delta_1, \sigma)$ be a path-object of $Y$ (which we can choose cofibrant). If $X$ is cofibrant, then there is a right $q$-homotopy $K: \xymatrix{f \ar@2{~>}[r] & g }$ with path-object $P$ such that $[ K]=[H]$.
\end{proposition}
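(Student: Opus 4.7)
The plan is the classical Quillen trick of producing a right homotopy from a left homotopy by lifting, together with an interchange-law calculation to match the induced 2-cells. Throughout, write $C = (W, d_0, d_1, s)$ for the $q$-cylinder and $P = (V, \delta_0, \delta_1, \sigma)$ for the $q$-path object.

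First I would build the right $q$-homotopy. By Lemma \ref{lemma2_quillen}, since $X$ is cofibrant, both $d_0$ and $d_1$ are trivial cofibrations. The square
$$
\xymatrix@C=6ex@R=4ex{
 X \ar[r]^{\sigma f} \ar@{ >->}[d]_{d_0}|\circ & V \ar@{->>}[d]^{(\delta_0,\delta_1)} \\
 W \ar[r]_{(fs,\,h)} \ar@{-->}[ur]^{H'} & Y\times Y
}
$$
commutes, since $(\delta_0,\delta_1)\sigma f = (f,f) = (fsd_0, hd_0)$. Axiom M2 furnishes a lift $H' : W \to V$ satisfying $H'd_0 = \sigma f$, $\delta_0 H' = fs$, and $\delta_1 H' = h$. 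Setting $k := H'd_1 : X \to V$ yields $\delta_0 k = fsd_1 = f$ and $\delta_1 k = hd_1 = g$, so $K := (P, k)$ is a right $q$-homotopy from $f$ to $g$.

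Next I would verify $[K] = [H]$ in the sense of Definition \ref{rel_adhoc}, i.e., that $\widehat{FK} = \widehat{FH}$ for every $F: \mathscr{C} \to \mathscr{D}$ sending weak equivalences to equivalences. Note $\widehat{FC}: Fd_0 \Rightarrow Fd_1$ is the unique 2-cell with $Fs\,\widehat{FC} = \mathrm{id}_{FX}$, and dually $\widehat{FP}: F\delta_0 \Rightarrow F\delta_1$ is the unique 2-cell with $\widehat{FP}\, F\sigma = \mathrm{id}_{FY}$ (here one needs the dual of the argument in Definition \ref{FH}, using that $F\sigma$ is an equivalence and hence $(F\sigma)^*$ is fully faithful). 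Then $\widehat{FH} = Fh\,\widehat{FC}$ and $\widehat{FK} = \widehat{FP}\,Fk$ by definition.

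The key step is an interchange computation on the triple horizontal composite $\widehat{FP} \cdot FH' \cdot \widehat{FC}$, a 2-cell from $F\delta_0 FH' Fd_0$ to $F\delta_1 FH' Fd_1$. The interchange law gives two factorisations:
$$
\widehat{FP}\cdot FH' \cdot \widehat{FC}
 \;=\; (\widehat{FP}\cdot FH' \cdot Fd_1)\circ(F\delta_0\cdot FH' \cdot \widehat{FC})
 \;=\; (F\delta_1\cdot FH' \cdot \widehat{FC})\circ(\widehat{FP}\cdot FH' \cdot Fd_0).
$$
Using $\delta_0 H' = fs$ one finds $F\delta_0 \cdot FH' \cdot \widehat{FC} = Ff \cdot Fs\,\widehat{FC} = \mathrm{id}_{Ff}$; and using $H'd_0 = \sigma f$ one finds $\widehat{FP}\cdot FH' \cdot Fd_0 = \widehat{FP}\,F\sigma \cdot Ff = \mathrm{id}_{Ff}$. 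Both identities absorb, leaving
$$
\widehat{FP}\cdot FH' \cdot Fd_1 \;=\; F\delta_1\cdot FH' \cdot \widehat{FC},
$$
which is exactly $\widehat{FP}\,Fk = Fh\,\widehat{FC}$, i.e., $\widehat{FK} = \widehat{FH}$.

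The main obstacle I anticipate is the bookkeeping around the uniqueness clause for $\widehat{FP}$ (which has not been spelled out in the text for right $q$-homotopies but follows by dualising Definition \ref{FH}) and ensuring the interchange identities hold in their whiskered form; once those are in place, the computation above closes out the proof.
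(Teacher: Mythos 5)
Your construction of $K$ is exactly the paper's: the same lifting square (trivial cofibration $d_0$ against the fibration $(\delta_0,\delta_1)$, using Lemma \ref{lemma2_quillen}) producing the lift $k'=H'$, followed by $k:=H'd_1$. The paper then merely asserts that $\widehat{FH}=\widehat{FK}$ is "not immediate but also not difficult"; your interchange computation on $\widehat{FP}\cdot FH'\cdot\widehat{FC}$, using $Fs\,\widehat{FC}=\mathrm{id}_{id_{FX}}$ and $\widehat{FP}\,F\sigma=\mathrm{id}_{id_{FY}}$ to absorb two of the four whiskered factors, is a correct and complete proof of that omitted step.
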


\begin{proof}
    $H$ is of the form $\xymatrix@R=2ex{ X \ar@<3pt>[rr]^{d_0}
                                            \ar@<-3pt>[rr]_{d_1}
                                            \ar[dr]_{id} && W
                                            \ar[rr]^{h} \ar[dl]^{s} && Y, \\
                                            & X \\ }$
   and by Lemma \ref{lemma2_quillen} both $d_0$ and $d_1$ are trivial cofibrations, so there is a morphism $k': W \longrightarrow V$ that making commutative the following diagram
$$
\xymatrix@C=4ex@R=2ex
      {
       X \ar[rr]^{\sigma f}
         \ar@{ >->}[dd]_{d_0}|\circ
    && V \ar@{->>}[dd]^{(\delta_0, \delta_1)}
    \\\\
       W \ar@{-->}[uurr]^{\exists \hspace{.5mm} k'}
         \ar[rr]^{(fs, h)}
    && Y \times Y
    \\
      }
$$
    
   \noindent Defining $k=k'd_1$, we obtain the right $q$-homotopy $K$ we were looking for:
    \begin{center}
        $\xymatrix@R=2ex{
        X \ar[rr]^{k} && V \ar@<3pt>[rr]^{\delta_0} \ar@<-3pt>[rr]_{\delta_1} && Y. \\
        &&& Y \ar[ul]^{\sigma}|\circ \ar[ur]_{id}\\
        }$
    \end{center}
    
    Given $F: \mathscr{C} \longrightarrow \mathscr{D}$, it is not immediate but also not difficult to prove that $\widehat{FH}=\widehat{FK}$.
    This then tells us that $[K] =  [H]$, concluding the proof.
\end{proof}

Note that for fibrant-cofibrant objects any fibrant cylinder or cofibrant path object is necessarily inside $\sr{C}_{fc}$, it follows:
\begin{remark} \label{lefttorightbis}
In \ref{lefttoright} when $X$ and $Y$ are fibrant-cofibrant all the objects involved are fibrant-cofibrant.  \cqd
\end{remark}
From the previous proposition together with its dual version, it follows that when the objects are fibrant and cofibrant, the classes of right \mbox{$q$-homotopies} correspond to the classes of left $q$-homotopies,
 so that $\fcHo$  are the same 2-category in either version. Considering its remark, the same holds for $\Hofc$.


Furthermore we obtain for free that the hom-categories of the 2-categories 
$\fcHo$ and $\Hofc$ are locally small. 
\begin{corollary} \label{loc_small}
    Let $X$, $Y$ be fibrant and cofibrant, and let $f, g: X \longrightarrow Y$ be morphisms in $\mathscr{C}$. Then $\fcHo[X,Y][f, g]$ and $\Hofc[X,Y][f, g]$ are sets.
\end{corollary}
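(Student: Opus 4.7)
The plan is to bound each hom-category by a set of arrows of $\sr{C}$ by exploiting the interplay between left and right $q$-homotopies for fibrant-cofibrant objects. The main obstacle is conceptual rather than technical: the collection of homotopies from $f$ to $g$ is a priori a proper class, since the cylinder object $W$ ranges freely over $\sr{C}$. The key observation is that once a single cofibrant $q$-path-object of $Y$ is fixed, Proposition \ref{lefttoright} lets us cram every 2-cell into a set of representatives.

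I will first treat $\Hofc[X,Y][f,g]$. By Corollary \ref{category_Ho}, any 2-cell from $f$ to $g$ is of the form $[H]$ for a single $q$-homotopy $H$ (rather than a sequence), so I may always choose a representative of this form. Next, I fix once and for all a cofibrant $q$-path-object $P=(V, \delta_0, \delta_1, \sigma)$ for $Y$, whose existence is guaranteed by axiom M4. Since $X$ is cofibrant, Proposition \ref{lefttoright} (together with Remark \ref{lefttorightbis}) then produces, for each such $H$, a right $q$-homotopy $K$ with this specific path-object $P$ satisfying $[K]=[H]$.

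Now a right $q$-homotopy with the fixed path-object $P$ is, by Definition \ref{q-h-right}, nothing but a morphism $k\colon X\mr{}V$ in $\sr{C}$ satisfying $\delta_0 k = f$ and $\delta_1 k = g$. The collection of such $k$ is a subset of the hom-set $\sr{C}[X,V]$, hence a set by local smallness of $\sr{C}$. The assignment $k\mapsto [K]$ then surjects this set onto $\Hofc[X,Y][f,g]$, which is therefore a set.

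To transfer the result to $\fcHo[X,Y][f,g]$, I invoke Remark \ref{ready2}: the hom-functor $\Hofc[X,Y] \mr{} \fcHo[X,Y]$ induced by the 2-functor $j$ of Remark \ref{listo} is full and acts as the identity on arrows. Consequently every 2-cell in $\fcHo[X,Y][f,g]$ is the image of some 2-cell in $\Hofc[X,Y][f,g]$, exhibiting the former as a quotient of the latter, and hence also as a set.
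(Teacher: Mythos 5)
Your proof is correct and follows essentially the same strategy as the paper's: fix a single (co)cylinder once and for all and use Proposition \ref{lefttoright} to force every 2-cell into a set of representatives indexed by a hom-set of $\sr{C}$. The only difference is that you stop after one application of the proposition, bounding the 2-cells of $\Hofc[X,Y][f,g]$ by morphisms $X \to V$ into a fixed path-object and then transferring to $\fcHo$ via fullness of $j$ (Remark \ref{ready2}), whereas the paper also applies the dual proposition to return to left homotopies with a fixed cylinder, bounding both hom-categories at once by a subset of $\sr{C}[W,Y]$.
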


\begin{proof}
Fix any cylinder $C$, by applying the proposition and its dual one after the other it follows that for all the 2-cells $[H]: f \Mr{} g$ we can choose a homotopy $H$ with $C$ as its cylinder.
\end{proof}
\begin{comentario} \label{germcoment2}
Considering Lemma \ref{lemma_3_steps} we assume in Comment \ref{germcoment1} that the homotopies are q-homotopies. Quillen also defines an equivalence relation between homotopies (see \cite{Qui} Ch.I $\S$2), and we have verified that the relation between homotopies defined by Quillen is just our germ relation under a different formulation. He also defines a "correspondence" relation between left homotopies $H$ and right homotopies $K$, $H \crpd K$. It can be easily seen that if $H \crpd K$ then $[H] = [K]$ (compare with Lemma 
\ref{germ==>adhoc}). On the other hand by its very definition the right homotopy $K$ constructed in Proposition \ref{lefttoright} is such that $H \crpd K$. In this way Lemma 1 in \cite{Qui} Ch.I $\S$2 corresponds to our \mbox{Proposition \ref{lefttoright}.} Quillen shows that classes of homotopies can be composed vertically and horizontally, but it does not mention the compatibility between both compositions, which suggests that this compatibility would not be valid. See more on this relation between homotopies in an appendix in \cite{e.d.2}.
\end{comentario}


\subsection{Some properties on fibrant homotopies}

By inspecting the proof of item 2) in Lemma \ref{lemma_3_steps} it is clear that we also have a proof of the following:
\begin{proposition} \label{fibrante}
Given any two objects $X$, $Y$, $f, g: X \longrightarrow Y$, and $H$  a fibrant homotopy from $f$ to 
$g$, there exists a fibrant $q$-homotopy $H'$ such that  
\mbox{$[H]=[H']$.} 
\end{proposition}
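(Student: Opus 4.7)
The statement is essentially a repackaging of Lemma \ref{exist_q_hpy} combined with Lemma \ref{germ==>adhoc}, so my plan is simply to chain these two results together. First I would apply Lemma \ref{exist_q_hpy} to the given fibrant homotopy $H$ from $f$ to $g$: this lemma produces a fibrant $q$-homotopy $H'$ from $f$ to $g$ together with the stronger conclusion $H \germ H'$, i.e. a zig-zag of cylinder maps (in fact a single cylinder map, built in the proof of \ref{exist_q_hpy} via axiom M2 applied to an arbitrary fibrant $q$-cylinder $C'$ for $X$ and the original cylinder $C$ of $H$).

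Next I would invoke Lemma \ref{germ==>adhoc}, which states that the germ relation is finer than the ad-hoc relation: whenever $H \germ H'$, the equality $[H] = [H']$ holds in $\HAo$. Composing the two steps yields a fibrant $q$-homotopy $H'$ with $[H] = [H']$, which is exactly the statement of the proposition.

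I do not expect any obstacle, since all the work has already been done in Section \ref{hacheo}: Lemma \ref{exist_q_hpy} supplies the syntactic replacement and Lemma \ref{germ==>adhoc} converts the syntactic germ identification into the semantic ad-hoc identification. The only thing worth remarking is that, unlike Lemma \ref{lemma_3_steps}, the hypotheses here do not require $X$ and $Y$ to be fibrant-cofibrant; we only need that the initial homotopy is fibrant and that axiom M4 guarantees the existence of at least one fibrant $q$-cylinder for $X$, which is what makes Lemma \ref{exist_q_hpy} applicable without any fibrancy/cofibrancy assumption on the objects themselves.
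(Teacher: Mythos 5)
Your proposal is correct and is essentially the paper's own argument: the paper proves this proposition by pointing back to the proof of Lemma \ref{lemma_3_steps}, which for an already fibrant homotopy reduces to exactly your two steps, namely Lemma \ref{exist_q_hpy} (lifting against an arbitrary fibrant $q$-cylinder via M2 to get $H \germ H'$) followed by Lemma \ref{germ==>adhoc}. Your remark that no fibrancy/cofibrancy hypothesis on $X$ and $Y$ is needed is also accurate, since M4 supplies a fibrant $q$-cylinder for any object.
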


 \begin{proposition} \label{lift2cells}
  Let $X \mr{p} Y$ be a trivial fibration, and $p\,f \Mr{[H]} p\,g$ be a 2-cell in $\Ho(Z, Y)$ with $H$ a $q$-homotopy,
$H = (C,\,h)$. Then there exists a unique 2-cell $f \Mr{[H']} g$  
in $\Ho(Z, Y)$ with $H'$ a q-homotopy (with same cylinder as $H$) such that 
$[H] = p\,[H']$. Note that if $H$ is fibrant, so is $H'$. 
\end{proposition}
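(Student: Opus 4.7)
The plan is to produce $H'$ by a single application of the lifting axiom M2 to the cofibration sitting inside the $q$-cylinder, and then deduce uniqueness of the class $[H']$ from the ad-hoc relation, using that $Fp$ becomes an equivalence in $\sr{D}$ under every 2-functor $F$ sending weak equivalences to equivalences. Write $H = (C, h)$ with $C = (W, d_0, d_1, s)$ a $q$-cylinder for $Z$. By Definition \ref{q-cylinderdef}, $\binom{d_0}{d_1}: Z \amalg Z \to W$ is a cofibration, and since $p\,(f, g) = (pf, pg) = (h d_0, h d_1) = h\binom{d_0}{d_1}$, the square with left edge $\binom{d_0}{d_1}$, top edge $(f, g): Z \amalg Z \to X$, right edge $p$ and bottom edge $h: W \to Y$ commutes; because $p$ is a trivial fibration, axiom M2 delivers a filler $h': W \to X$ with $h' d_0 = f$, $h' d_1 = g$ and $p h' = h$. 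Setting $H' = (C, h')$ gives the required $q$-homotopy, which by construction shares the cylinder of $H$ and satisfies $p H' = H$.

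For uniqueness, let $H'_1 = (C, h'_1)$ and $H'_2 = (C, h'_2)$ be any two $q$-homotopies with cylinder $C$ satisfying $p[H'_1] = p[H'_2] = [H]$ in $\Ho$, and let $F: \sr{C} \to \sr{D}$ be any 2-functor sending weak equivalences to equivalences. Then $Fp$ is an equivalence in $\sr{D}$, so by Proposition \ref{s*_equivalence} the postcomposition $(Fp)_*: \sr{D}(FZ, FX) \to \sr{D}(FZ, FY)$ is an equivalence of categories, in particular fully faithful on hom-sets. Unfolding Definition \ref{FH} and using the hypothesis $[pH'_i] = [H]$,
\[
(Fp)_*\,\widehat{FH'_i} \;=\; Fp \, Fh'_i\, \widehat{FC} \;=\; F(ph'_i)\, \widehat{FC} \;=\; \widehat{F(pH'_i)} \;=\; \widehat{FH},
\]
so $\widehat{FH'_1}$ and $\widehat{FH'_2}$ are 2-cells $Ff \Mr{} Fg$ with the same image under the fully faithful map $\sr{D}(FZ, FX)(Ff, Fg) \to \sr{D}(FZ, FY)(F(pf), F(pg))$. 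They therefore coincide, and since this holds for every admissible $F$, Definition \ref{rel_adhoc} yields $[H'_1] = [H'_2]$ in $\Ho$. Fibrancy is automatic: $H'$ shares the cylinder of $H$ and in particular the same $s$, so $s$ remains a fibration whenever it was one for $H$.

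The main obstacle is the uniqueness clause, because the filler $h'$ is genuinely non-unique as an arrow of $\sr{C}$ and one cannot conclude by equality of arrows. The semantic ad-hoc definition of 2-cells in $\Ho$ is precisely the device that handles this, converting non-uniqueness of the lift $h'$ into uniqueness of the class $[H']$ by reducing the question to equality of 2-cells in a 2-category $\sr{D}$ where $Fp$ is a full-blown equivalence and the cancellation is immediate. Existence, by contrast, is just a single instance of M2 once $\binom{d_0}{d_1}$ is identified as the relevant cofibration.
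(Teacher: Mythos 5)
Your proof is correct and follows essentially the same route as the paper's: existence via a single M2 lifting against the cofibration $\binom{d_0}{d_1}$ and the trivial fibration $p$, and uniqueness of the class by cancelling $Fp$ (an equivalence, hence fully faithful under postcomposition) for every admissible 2-functor $F$. The only difference is that you spell out the full-faithfulness step and the fibrancy remark explicitly, which the paper leaves implicit.
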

\begin{proof}
$$
\xymatrix@C=12ex@R=7ex
   {
    Z \amalg Z \ar[r]^{\binom{f}{g}}
               \ar@{>->}[d]_{\binom{d_0}{d_1}} 
  & X \ar@{->>}[d]^{p}|\circ 
 \\ W \ar@{-->}[ur]^{\exists \,h'}
      \ar[r]^{h} 
  & Y
   }{}
$$
That $[H] = p\,[H']$ follows immediately by \ref{rcompoq}, see (\ref{dfeq}) in  Definition \ref{horiz_composition_def}. For the uniqueness, if
$p\,[H] = p\,[K]$ then $Fp\, \widehat{FH} =  Fp\, \widehat{FK}$, and since $Fp$ is an equivalence, it follows $\widehat{FH} =  \widehat{FK}$, thus 
$[H] = [K]$.
\end{proof}
The need of the fibrant homotopies is precisely to allow the use of Proposition \ref{lift2cells}, for which the 2-cells considered have to be determined by \mbox{$q$-homotopies,} which by Proposition \ref{fibrante} is the case for fibrant homotopies,  but not for the general 2-cell of 
$\Ho$. 

\subsection{Fibrant and Cofibrant replacement pseudofunctors.}

We refer to Definition \ref{cofibrantreplacement} and note that the definition of $Q$ on objects and arrows does not use the functoriality of the factorization. In fact, for each $X$ in $\mathscr{C}$, we factor $0 \longrightarrow X$ obtaining a cofibrant object $QX$ and a trivial fibration $p_X: QX \longrightarrow X$. If $X$ is already cofibrant, we choose  $QX = X$ and $p_X = id_X$. Given $f:X \longrightarrow Y$, $Qf:QX \longrightarrow QY$ is defined satisfying the equation $p_{Y}Qf=fp_{X}$ as we see in the diagram
\begin{center}
   $\xymatrix@R=2.5ex
     {
      0 \ar@{ >->}[rr] \ar@{ >->}[dd] && QY \ar@{->>}[dd]^{p_Y}|\circ 
   \\
   \\
      QX \ar@{-->}[uurr]|{Qf} \ar@{->>}[r]_{p_X}|\circ & X \ar[r]_{f} & Y
   \\
      }$
\end{center}
As before, by axiom M5, if $f$ is a weak equivalence, then so is $Qf$.

It remains then to define $Q$ on the homotopies and to define the pseudofunctor structure.

\vspace{1ex}

We found convenient to say that a 2-cell determined by a fibrant homotopy is a \emph{fibrant 2-cell}. 

\vspace{1ex}

1. Definition of $Q$ on a fibrant $2$-cell $[H]$,
$$
\xymatrix@R=2.5ex@C=7ex
    {
  \\ 
     \text{Given \hspace{-7ex}} 
   &
     X  \ar@<1.5ex>[r]^{f}
        \ar@{}[r]|{\Downarrow[H]} 
        \ar@<-1.5ex>[r]_{g}
   & Y
   & \text{\hspace{-7ex} we have a diagram }
     }
\xymatrix@C=7ex@R=5ex
   {
     QX \ar@<1.2ex>[r]^{Qf}
        \ar@{}| {} 
        \ar@<-1.2ex>[r]^{Qg}
        \ar[d]^{p_X}
   & QY \ar@{->>}[d]^{p_Y}|{\circ} 
  \\
     X \ar@<1.5ex>[r]^{f}
        \ar@{}[r]|{\Downarrow[H]} 
        \ar@<-1.5ex>[r]_{g}
   & Y
   }  
$$
$[H]\,p_X$ is a fibrant 2-cell and so by Proposition 
\ref{lift2cells} there is a unique $[H']$ such that $p_Y\,[H'] = [H]\,p_X$. We set $Q[H] = [H']$. By this definition $Q[H]$ is a fibrant 2-cell characterised by the equation
\begin{equation} \label{char1}
p_Y\,Q[H] =  [H]\,p_X.  
\end{equation}

2. Definition of the pseudofunctor structure for $Q$,

Similarly, considering the equality in the lower part of the diagrams in   \ref{pseudoestructura} as fibrant 2-cells, we obtain fibrant
2-cells $id_{QX} \Mr{\xi_X} Q(id_X)$, 
\mbox{$Qg\,Qf \Mr{\phi_{gf}} Q(g\,f)$} characterised by the equations,
\begin{equation} \label{char2-3}
p_X \, \xi_X = p_X, \hspace{4ex} p_Z\,\phi_{gf} = g\,f\,p_X
\end{equation} 
Using the characterisations \ref{char1}, \ref{char2-3} it readily follow all the equations required for the pseudofunctor axioms. \cqd

\vspace{1ex}

In what follows we will work with both left fibrant and right cofibrant  homotopies, for clarity we will indicate with a superscript 
"$\ell\,f$", "$r\, c$" 
which are the 2-cells of the homotopy 2-category being considered.

From 1. and 2. above we have:
\begin{proposition} {\bf (Cofibrant replacement)} \label{cofrepl}
There is a pseudofunctor 
$\Holf \mr{Q} \Holf$ 
together with a 2-natural transformation $Q \Mr{p} id$, such that for all $X$, $QX$ is cofibrant and $p_X$ is a trivial fibration. If $X$ is already cofibrant, {$QX = X$ and 
$p_X = id_X$.}
\end{proposition}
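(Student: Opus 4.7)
The plan is to take the partial data already exhibited in items 1.\ and 2.\ preceding the statement and promote it to a full pseudofunctor, letting the coherence axioms fall out of the uniqueness clause in Proposition \ref{lift2cells}.

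On objects and arrows I would take $Q$ and $p$ exactly as recalled at the beginning of this subsection (no functoriality of factorization is required). For a 2-cell $[H]: f \Mr{} g$ in $\Holf$, I would first invoke Proposition \ref{fibrante} to pick a fibrant $q$-homotopy representative for $[H]$, then apply Proposition \ref{lift2cells} to the trivial fibration $p_Y$ and the 2-cell $[H]\,p_X$; this produces the unique fibrant 2-cell $Q[H]: Qf \Mr{} Qg$ characterised by the equation $p_Y\,Q[H] = [H]\,p_X$, namely (\ref{char1}). Well-definedness with respect to the choice of $q$-homotopy representative is automatic because the characterising equation only involves the class $[H]$. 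The structure 2-cells $\xi_X : id_{QX} \Mr{} Q(id_X)$ and $\phi_{g,f}: Qg\,Qf \Mr{} Q(gf)$ would be produced by exactly the same recipe, as the unique fibrant lifts characterised by (\ref{char2-3}).

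Functoriality of $Q$ on hom-categories is then immediate by uniqueness: both $Q([K]\circ[H])$ and $Q[K]\circ Q[H]$ satisfy the equation $p_Y\,(-) = ([K]\circ[H])\,p_X$ against $p_Y$, so they agree, and similarly for identity 2-cells. I expect the pseudofunctor axioms -- the associativity equation for $\phi$ and the two unit equations relating $\xi$ and $\phi$ -- to be treated in exactly the same style: for each axiom, both sides, after post-composition with the appropriate $p_W$, reduce via (\ref{char1}), (\ref{char2-3}) and the strict equation $p_Y\,Qf = f\,p_X$ to one and the same canonical composite in $\sr{C}$, and therefore coincide by the uniqueness clause of Proposition \ref{lift2cells}. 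The naturality of $\phi$ in 2-cells, $\phi_{g',f'}\circ(Q\beta\,Q\alpha) = Q(\beta\alpha)\circ \phi_{g,f}$, will be handled identically.

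Finally, for the 2-naturality of $p: Q \Mr{} id$, the arrow component $p_Y\,Qf = f\,p_X$ is built into the definition of $Qf$, the 2-cell component $p_Y\,Q[H] = [H]\,p_X$ is (\ref{char1}), and compatibility with the pseudofunctor structure cells $\xi$ and $\phi$ is precisely the pair of equations (\ref{char2-3}). The main obstacle I anticipate is the verification of the pseudofunctor axioms, but the approach above absorbs each one into a uniqueness argument against Proposition \ref{lift2cells}, so no direct 2-categorical calculation in $\sr{C}$ is needed; the whole construction is bookkept by the characterising equations.
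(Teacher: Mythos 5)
Your construction is essentially the paper's own proof: $Q[H]$ is defined as the unique lift of $[H]\,p_X$ along the trivial fibration $p_Y$ provided by Proposition \ref{lift2cells}, the structure cells $\xi_X$ and $\phi_{gf}$ are the unique fibrant lifts characterised by (\ref{char2-3}), and all pseudofunctor and 2-naturality axioms follow by post-composing with the relevant component of $p$ and invoking uniqueness. The one small point to watch is that Proposition \ref{fibrante} should be applied to the fibrant 2-cell $[H]\,p_X$ rather than to $[H]$ itself, since right-composition with $p_X$ does not preserve $q$-cylinders (Comment \ref{rcompoq}); as $[H]\,p_X$ is still fibrant this changes nothing of substance.
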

The dual statement takes the form:
\begin{proposition} {\bf (Fibrant replacement)} \label{fibrepl}
There is a pseudofunctor $\Horc \mr{R} \Horc$ together with a 2-natural transformation $id \Mr{v} R$, such that for all $X$, $RX$ is fibrant and $v_X$ is a trivial cofibration. If $X$ is already fibrant, $RX = X$ and 
$v_X = id_X$.
\end{proposition}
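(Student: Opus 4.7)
My plan is to derive Proposition \ref{fibrepl} by duality from Proposition \ref{cofrepl}. By the self-duality of the model category axioms (see \ref{duality}), $\sr{C}^{op}$ carries a model structure in which fibrations and cofibrations are swapped and weak equivalences are preserved. Under this passage, cylinders become path objects, left homotopies become right homotopies, fibrant cylinders become cofibrant path objects, and the 2-category $\Horc(\sr{C})$ corresponds to $\Holf(\sr{C}^{op})$ with 1-cells reversed. Applying Proposition \ref{cofrepl} to $\sr{C}^{op}$ and translating back then delivers the pseudofunctor $R$ and the 2-natural transformation $\mathrm{id} \Mr{v} R$ with the claimed properties.

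For the record I would also spell out the construction directly, mimicking items 1 and 2 preceding Proposition \ref{cofrepl}. Using axiom M4, factor $X \to 1$ as $X \mrce{v_X} RX \mrf{} 1$, choosing $RX = X$ and $v_X = \mathrm{id}_X$ when $X$ is already fibrant. For $f:X \to Y$ define $Rf$ by lifting
\[
\xymatrix@R=4ex@C=6ex
   {
    X \ar[rr]^{v_Y f}
      \ar@{>->}[d]_{v_X}|\circ
 && RY \ar@{->>}[d]
   \\
    RX \ar@{-->}[urr]^{Rf} \ar[rr]
 && 1
   }
\]
so that the 1-naturality equation $Rf\,v_X = v_Y\,f$ holds by construction, and $Rf$ preserves weak equivalences by M5. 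On 2-cells and for the structure 2-cells $\xi_X \colon \mathrm{id}_{RX} \Mr{} R(\mathrm{id}_X)$ and $\phi_{g,f} \colon R(gf) \Mr{} Rg\,Rf$ of the pseudofunctor, I would invoke the dual of Proposition \ref{lift2cells}: given a trivial cofibration $Y \mrce{v} X$ and a right cofibrant 2-cell between arrows $f, g \colon X \to Z$, the 2-cell is determined uniquely by its whiskering with $v$, and every such whiskered 2-cell lifts. Given this, I would define $R[K]$, $\xi_X$ and $\phi_{g,f}$ as the unique right cofibrant 2-cells characterised respectively by
\[
R[K]\,v_X = v_Y\,[K], \qquad \xi_X\,v_X = v_X, \qquad \phi_{g,f}\,v_X = v_Z\,g\,f,
\]
and then derive the pseudofunctor axioms and the 2-naturality of $v$ from uniqueness, exactly as in the treatment of $Q$.

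The only point requiring genuine work is the dual of Proposition \ref{lift2cells}, which is the pushout-based lift dual to the pullback-based argument used there; this in turn relies on the dual of Proposition \ref{fibrante} to reduce arbitrary right cofibrant 2-cells to ones represented by cofibrant right $q$-homotopies. I expect this dualisation step to be the main obstacle, but it requires no new ideas beyond those already present in the fibrant-cylinder case, so the argument is essentially a formality once the dualities have been written down.
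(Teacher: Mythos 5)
Your proposal matches the paper's own treatment: the paper obtains Proposition \ref{fibrepl} simply as the dual statement of Proposition \ref{cofrepl}, whose construction (factorization via M4, lifting of 2-cells via Proposition \ref{lift2cells}, and the characterising equations for $\xi_X$ and $\phi_{gf}$) you have correctly dualised. Your explicit spelling-out of the dual construction, including the reliance on the duals of Propositions \ref{fibrante} and \ref{lift2cells} with cofibrant path objects in place of fibrant cylinders, is exactly what the paper leaves implicit, so the argument is correct and essentially identical.
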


{\bf Composition of the fibrant with the cofibrant replacements.} We can apply the fibrant replacement after the cofibrant one and obtain a fibrant-cofibrant replacement as follows:

Let $X \mrdos{f}{g} Y$, $f \Mr{[H]} g$ be a 2-cell in $\Holf$, then 
$Qf \Mr{Q[H]} Qg$ has cofibrant domain $QX$, thus by Proposition \ref{lefttoright} it can be considered as a 2-cell in $\Horc$, thus we can apply the pseudofunctor $R$, obtaining $RQf \Mr{RQ[H]} RQg$, which has a fibrant codomain $RQY$, and in turn by the dual of \mbox{Proposition \ref{lefttoright}} it can be reconsidered as a 2-cell in $\Holf$. In this way the pseudofunctors  $Q$ and $R$ determine by composition  a pseudofunctor 
$\Holf \mr{} \Holf$. We have:
\begin{proposition} [{\bf Fibrant-cofibrant replacement}]
\label{fib-cofib}
There exist a pseudofunctor 
\mbox{$\Holf \mr{RQ} \Holf$} which is the composition of $R$ after $Q$,
and 2-natural transformations 
\mbox{
      $
      id \Ml{p} Q
\xymatrix@C=5ex
    {
     {} \ar@2{->}[r]^{v\, Q}
   & {}
    }
      R\,Q
     $ 
   } 
such that $p_X$ is a trivial fibration and  
$(v\,Q)_X$ is a trivial cofibration  for each $X$.
All the $(R Q)X$ are fibrant-cofibrant objects, and if $X$ is already fibrant-cofibrant, then \mbox{$(RQ)X = X$,} $p_X = id_X$ and $(vQ)_X = id_{QX}$.
If $f$ is a weak equivalence so is $(RQ)f$. \cqd
\end{proposition}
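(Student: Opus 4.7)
The plan is to formalise the composition sketched in the paragraph preceding the statement. On objects, set $(RQ)X := R(QX)$; this is fibrant by Proposition \ref{fibrepl} and cofibrant because $QX$ is cofibrant by Proposition \ref{cofrepl} and $v_{QX}$ is a cofibration, so the composite $0 \mr{} QX \mr{v_{QX}} RQX$ is a cofibration. On arrows set $(RQ)f := R(Qf)$. To define $RQ$ on a 2-cell $[H]$ of $\Holf$, first apply $Q$ (Proposition \ref{cofrepl}) to obtain a fibrant 2-cell $Q[H]\colon Qf \Mr{} Qg$; since its domain $QX$ is cofibrant, Proposition \ref{lefttoright} together with Remark \ref{lefttorightbis} allow us to represent this class by a right $q$-homotopy with fibrant-cofibrant path object, so we may regard it as a 2-cell in $\Horc$. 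Now apply $R$ (Proposition \ref{fibrepl}) to obtain a 2-cell in $\Horc$ whose codomain $RQY$ is fibrant, and finally use the dual of Proposition \ref{lefttoright} to move back into $\Holf$. Declare $RQ[H]$ to be this 2-cell.

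The pseudofunctor structure cells for $RQ$, namely $id_{RQX} \Mr{} RQ(id_X)$ and $RQg \cdot RQf \Mr{} RQ(gf)$, are obtained by pasting the structure cells of $R$ with those of $Q$ transported through the $\Holf \leftrightarrow \Horc$ dictionary. The pseudofunctor axioms for $RQ$ then reduce to those already established for $R$ and $Q$ individually. The main obstacle will be verifying that this back-and-forth passage between left and right homotopies is compatible with vertical and horizontal composition of classes. This is manageable because Proposition \ref{lefttoright} is phrased up to the semantic ad-hoc equivalence (Definition \ref{rel_adhoc}), so the passage is actually functorial on hom-categories once the objects involved are fibrant-cofibrant; the bookkeeping amounts to checking that the characterising equations (\ref{char1}) and (\ref{char2-3}), together with their right-sided analogues for $R$, remain intact under the dictionary, which is a direct diagrammatic verification.

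Next, define $vQ\colon Q \Mr{} RQ$ pointwise by $(vQ)_X := v_{QX}$. Its 2-naturality on arrows and 2-cells is inherited from the 2-naturality of $v$ (Proposition \ref{fibrepl}) whiskered with $Q$, once we transport $Q$'s 2-cells into $\Horc$ as above; and $p\colon Q \Mr{} id$ is delivered directly by Proposition \ref{cofrepl}. The boundary conditions are then immediate: $p_X$ is a trivial fibration and $(vQ)_X = v_{QX}$ is a trivial cofibration by construction; when $X$ is already fibrant-cofibrant, the normalisation clauses in Propositions \ref{cofrepl} and \ref{fibrepl} yield $QX = X$, $RX = X$, hence $(RQ)X = X$, $p_X = id_X$ and $(vQ)_X = id_{QX}$. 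Finally, preservation of weak equivalences by $RQ$ follows from the corresponding property for $Q$ and $R$ individually, since weak equivalences are closed under composition.
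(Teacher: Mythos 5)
Your proposal is correct and follows essentially the same route as the paper: the paper's own justification is precisely the paragraph preceding the statement, which defines $RQ$ on a 2-cell by applying $Q$, transporting the resulting 2-cell with cofibrant domain $QX$ into $\Horc$ via Proposition \ref{lefttoright}, applying $R$, and returning to $\Holf$ via the dual of that proposition. Your added details --- the cofibrancy of $(RQ)X$ via closure of cofibrations under composition, and the compatibility of the left/right dictionary with the compositions --- only make explicit what the paper leaves implicit.
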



\subsection{The 2-localization theorem.} Now we can return safely to the previous simpler notation, we drop the superscript $\ell$ assuming the homotopies are left homotopies, $\Ho = \Hol$, also by Lemma \ref{lemma_3_steps}, and Proposition \ref{lefttoright} and its dual, for fibrant-cofibrant objects, we have
$\fcHo = \fclfHo = \fcrcHo$.

\vspace{1ex}
    
Let $j$ be the full inclusion \mbox{$\fcHo \mr{j} \Hof$}, it is clear that $RQ$ factors 
\xymatrix@C=5ex
   {
    \Hof \ar[r]^{r}
    \ar@{->}@<-4pt>`u[rr]`[rr]^{RQ}[rr]
  & \fcHo \ar[r]^j  
  & \Hof
   }, 
   \mbox{$ RQ = jr$.} 
We define $q$ as the composition
$\sr{C} \mr{i} \Hof \mr{r} \fcHo$.
As in the subsection \ref{2location} we now want to see that the pseudofunctor \mbox{$q:\mathscr{C} \longrightarrow \fcHo$} determines the 2-localization of $\sr{C}$ at the weak equivalences, that is, that we have a 2-category pseudoequivalence
$$
q^*: pHom_p(\fcHo, \mathscr{D}) \mr{}
             pHom_{p}(\mathscr{C}, \mathscr{D})_+
$$
for every 2-category $\mathscr{D}$.

\vspace{1ex}

Since $q = r \, i$, to prove that $q^\ast$ is a pseudoequivalence it will be enough to see that 
$r^*: pHom_p(\fcHo, \sr{D}) \longrightarrow pHom_{p}(\cc{H}o^f(\sr{C}), \mathscr{D})_+$
and \mbox{$i^*: pHom_p(\cc{H}o^f(\sr{C}), \sr{D})_+ \longrightarrow pHom_{p}(\mathscr{C}, \mathscr {D})_+$}
are both 2-category pseudoequivalences. Concerning 
$\,i^*$, Theorem \ref{i_iso_for_pHom} already tells us that it is indeed an isomorphism.
Having at our disposal  the pseudofunctors $Q$ and $R$, the proof of the theorem corresponding to Theorem \ref{pseudoequivalence} is much simpler.
\begin{theorem} \label{pseudoequivalence_p}
The 2-functors
$$
\xymatrix
    {
     pHom_p(\fcHo, \sr{D})
                     \ar@<5pt>[rr]^{r^*}
                     \ar@<-5pt>@{<-}[rr]_{j^*}
   &&
     pHom_{p}(\cc{H}o^f(\sr{C})
     , \mathscr{D})_+
   }
$$
determine a 2-category pseudoequivalence.
\end{theorem}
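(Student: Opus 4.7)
The plan is to prove the pseudoequivalence by the same strategy used in Theorem \ref{pseudoequivalence}, but with pseudofunctor replacements now available. Namely, I would establish (i) that $r \circ j = id_{\fcHo}$ strictly, which gives $j^* \circ r^* = id$; and (ii) that $j \circ r = RQ$ is pseudonaturally equivalent to $id_{\Hof}$, via the zigzag from Proposition \ref{fib-cofib}, whence $r^* \circ j^* \simeq id$.

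For (i), the last sentence of Proposition \ref{fib-cofib} tells us that $RQ$ is the identity on fibrant-cofibrant objects and that $p_X = id$, $(vQ)_X = id$ there; combined with $jr = RQ$ and $j$ being a full inclusion, this forces $rj = id_{\fcHo}$ on the nose. For (ii), let $F \in pHom_p(\Hof, \sr{D})_+$. Proposition \ref{fib-cofib} provides 2-natural transformations $id \xLeftarrow{p} Q \xRightarrow{vQ} RQ$, each component of which is a weak equivalence. Applying $F$ we obtain pseudonatural transformations $Fp$ and $F(vQ)$ with componentwise equivalent values in $\sr{D}$, since $F$ sends weak equivalences to equivalences. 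By Proposition \ref{prop_infinite}, both are equivalences in $pHom_p(\Hof, \sr{D})_+$. Choosing a pseudonatural quasi-inverse $(Fp)'$ of $Fp$, I would define
\[
\eta_F \;:=\; F(vQ) \circ (Fp)' \;:\; F \;\Longrightarrow\; F \circ RQ \;=\; r^{*}j^{*}(F),
\]
which is then an equivalence in $pHom_p(\Hof, \sr{D})_+$.

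The remaining and main step is to show that $\eta$ assembles into a pseudonatural equivalence in the variable $F$. Given an arrow $\sigma : F \Rightarrow G$ in $pHom_p(\Hof, \sr{D})_+$, I need to define an invertible modification $\eta_\sigma : r^{*}j^{*}(\sigma) \circ \eta_F \Longrightarrow \eta_G \circ \sigma$. This is constructed pointwise exactly as in the proof of Theorem \ref{pseudoequivalence}, by the pasting
\[
\xymatrix@R=3pc@C=3.5pc
    {
     FX \ar[r]^{F'(p_X)} \ar[d]_{\sigma_X} \ar@{}[dr]|{\rotatebox[origin=c]{45}{$\Leftarrow$}\sigma'_{(p_X)}}
   & FQX \ar[r]^{F(v_{QX})} \ar[d]^{\sigma_{QX}} \ar@{}[dr]|{\rotatebox[origin=c]{45}{$\Leftarrow$}\sigma_{(v_{QX})}}
   & FRQX \ar[d]^{\sigma_{RQX}}
  \\
     GX \ar[r]_{G'(p_X)}
   & GQX \ar[r]_{G(v_{QX})}
   & GRQX
    }
\]
where the right square uses the invertible pseudonatural structure cell $\sigma_{(v_{QX})}$, and the left one is obtained from $\sigma_{(p_X)}$ by precomposing and postcomposing with the chosen quasi-inverses $F'(p_X)$, $G'(p_X)$ together with the triangular 2-cells $\alpha_G$ and $\beta_F$, as in the final pasting of Theorem \ref{pseudoequivalence}.

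The main obstacle is conceptual rather than technical: it lies in ensuring that the pseudofunctoriality (not merely 2-functoriality) of $Q$ and $R$ causes no interference. This works because $p$ and $vQ$ are already \emph{2-natural} transformations, not just pseudonatural, so the pasting diagram above only involves the pseudonatural structure of $\sigma$ and the triangular equations of the quasi-inverses $(Fp)'$, $(Gp)'$. Hence the argument reduces, mutatis mutandis, to the one already written in Theorem \ref{pseudoequivalence}, and is in fact simpler since $p$ does not need to be preceded by any ambient inclusion $i$.
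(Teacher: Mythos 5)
Your proof is correct, but it takes the long route: you replay the argument of Theorem \ref{pseudoequivalence} (choose, for each $F$, a pseudonatural quasi-inverse $(Fp)'$, set $\eta_F = F(vQ)\circ (Fp)'$, and then verify pseudonaturality in $F$ by hand-building the modification $\eta_\sigma$ from the structure cells of $\sigma$ and the triangle 2-cells of the chosen quasi-inverses). The paper instead exploits precisely what the appendix setting provides and what was missing in Section \ref{2location}: since $p$ and $vQ$ are now 2-natural transformations between endo-pseudofunctors of $\Hof$ (Proposition \ref{fib-cofib}), precomposition immediately yields genuine pseudonatural transformations $p^* : Q^* \Mr{} id$ and $(vQ)^* : Q^* \Mr{} (RQ)^*$ between the corresponding 2-functors on $pHom_p(\Hof,\sr{D})_+$; pseudonaturality in the variable $F$ is automatic because these are whiskerings of a fixed transformation, so no modification has to be constructed. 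Two applications of Proposition \ref{prop_infinite} then reduce everything to the observation that $F(p_X)$ and $F(v_{QX})$ are equivalences in $\sr{D}$, and the zigzag $id \Ml{p^*} Q^* \Mr{(vQ)^*} (RQ)^*$ of equivalences gives $id \simeq r^*j^*$ without ever choosing an objectwise quasi-inverse. Your version is not wrong, but it forgoes the simplification that is the stated point of having the replacements $Q$, $R$ available at the level of the homotopy 2-category ("the proof \ldots is much simpler"); conversely, it has the minor virtue of being self-contained relative to the template of Theorem \ref{pseudoequivalence}. One small caveat either way: both arguments invoke Proposition \ref{prop_infinite} in the $pHom_p$ (pseudofunctor) setting, a routine but unstated extension of the version recorded in the preliminaries.
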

\begin{proof}
We already have $j^*\, r^* = (r\,j)^* = id^* = id$, then we only have to see
$id \simeq r^*\,j^* {= (jr)^* = (RQ)^*}$.
Let $id \Ml{p^*} Q^*
\xymatrix@C=6ex{{} \ar@2{->}[r]^{{(vQ)}^*}&{}} {(R Q)}^*$ the pseudo-natural transformations induced by those of the Proposition \ref{fib-cofib}. To see that they induce the sought equivalence, Proposition \ref{prop_infinite} tells us that it is enough to see that for each $\cc{H}o^f(\sr{C}) \mr{F} \mathscr{D}$ that sends weak equivalences into equivalences, and for every $X$ in $\sr{C}$, it suffices to show that
$((p^*)_F)_X = F(p_X)$ and $(({(vQ)}^*)_F)_X = F(v_{QX})$ are equivalences, which follows precisely from the Propositions \ref{cofrepl} and \ref{fibrepl} and the hypothesis made on $F$.
\end{proof}

\end{appendix}


\begin{thebibliography}{}
\bibitem{e.d.2}
M.E. Descotte, E.J. Dubuc, M. Szyld, \emph{A localization of bicategories via homotopies}, Theory and Applications of Categories, Vol. 35, No. 23, pp 845-874 (2020).
\bibitem{e.d.}
M.E. Descotte, E.J. Dubuc, M. Szyld, \emph{Model bicategories and their homotopy bicategories}, Advances in Mathematics Volume 404, Part B, (2022).
\bibitem{Dwy}
W. G. Dwyer, J. Spalinski, \emph{Homotopy theories and model categories}, Handbook of Algebraic Topology (I. M. James, ed.), Elsevier Science B.V., 1995.
\bibitem{DK1} Dwyer W.G., Kan D.M., \textsl{Simplicial localizations of categories}, Journal of Pure and Applied Algebra 17 Pages 267-284 (1980).
\bibitem{DK2} Dwyer W.G., Kan D.M., \textsl{Calculating simplicial localizations}, Journal of Pure and Applied Algebra 18 Pages 17-35 (1980)
\bibitem{GJ}
Girabel, J., \textsl{La 2-localizaci\'on de una categor\'ia de modelos}, http://arxiv.org/abs/2009.05390v1 (2020).
\bibitem{JG}
P. G. Goerss, J. F. Jardine, \emph{Simplicial Homotopy Theory}, Progress in Math., vol. 174, Birkhauser Verlag, Basel, 1999.
\bibitem{Bak}
J. W. Gray, \emph{Formal Category theory: Adjointness for 2-categories}, Lecture Notes in Mathematics, Vol. 391, Springer-Verlag, Berlin-New York, 1974.
\bibitem{Pro}
D. A. Pronk, \emph{Etendues and stacks as bicategories of fractions},  Compositio Mathematica, Volume 102 (1996) no. 3, p. 243-303.
\bibitem{Qui}
D. Quillen, \emph{Homotopical Algebra}, Springer Lecture Notes in Mathematics 43 (1967).
\bibitem{Riehl}
E. Riehl, \emph{Higher Dimensional Categories Model Categories and Weak factorization Systems}, \url{http://www.math.jhu.edu/~eriehl/essay.pdf}
\bibitem{S}
M. Szyld, \emph{The homotopy relation in a category with weak equivalences}, https://arxiv.org/abs/1804.04244v1 (2018).
\end{thebibliography}
\end{document}